\renewcommand{\d}{\mathrm{d}}
\makeatletter\@addtoreset{equation}{section} \makeatother
\begin{document}
	\title{The Ergodic Linear-Quadratic Optimal Control Problems for Stochastic Mean-Field Systems with Periodic Coefficients\thanks{Submitted to the editors \today
		\funding{This paper is supported by National Key R\&D Program of China (No.2022YFA1006101), National Natural Science Foundation of China (No.12371445) and the Science and Technology Commission of Shanghai Municipality (No.22ZR1407600).}}}
\author{Jiacheng Wu
	\thanks{School of Mathematical Sciences, Fudan University, Shanghai 200433, China (\email{22110180042@m.fudan.edu.cn}).}
	\and Qi Zhang
	\thanks{Corresponding author. School of Mathematical Sciences, Fudan University, Shanghai 200433, China and Laboratory of Mathematics for Nonlinear Science, Fudan University, Shanghai 200433, China (\email{qzh@fudan.edu.cn}).}
}
		\maketitle
\begin{abstract}
In this paper, we concern with the ergodic linear-quadratic closed-loop optimal control problems, in which the state equation is the mean-field stochastic differential equation with periodic coefficients. We first study the asymptotic behavior of the solution to the state equation and get a family of periodic measures depending on time variables within a period from the convergence of transition probabilities. Then, with the help of periodic measures and periodic Riccati equations, we transform the ergodic cost functional on infinite horizon into an equivalent cost functional on a single periodic interval without limit, and
present the closed-loop optimal controls for our concerned control system. Finally, an example is given to demonstrate the applications of our theoretical results.
\end{abstract}
\begin{keywords} ergodic LQ optimal control problems, mean-field, periodic coefficients, periodic measures, closed-loop.
\end{keywords}
\begin{MSCcodes}
93E20, 93C15
\end{MSCcodes}

\section{Introduction}

The study of deterministic linear-quadratic (LQ) optimal control problem has a long history which is traced back to Kalman \cite{ref9} in 1960. Later, the stochastic LQ optimal control problem with deterministic coefficients also appeared in  Wonham \cite{ref10} in 1968. But the stochastic LQ optimal control problem with random coefficients , put forward by  Bismut \cite{ref11} in 1976,
is quite different, for which the Riccati equation is a non-Lipschitz backward stochastic differential equation. Bismut indicated that the solvability of Riccati equation is ``a challenging task" which was solved by Tang \cite{ta} in 2003.

In the past half century, LQ problem has been widely studied and extended to various types. Among these extended researches on LQ problem, the mean-field state equation is a special and important one. The mean-field system simplifies the complex interactions in large-scale systems by replacing intricate microscopic dynamics with a macroscopic ``average" description. In recent years, the mean-field system has become a hot topic in theories and practices across many fields like physics, mathematics, economics, biology, and machine learning, etc.

Yong \cite{ref22}, published in 2013, first studied the LQ optimal control problem for stochastic mean-field system on finite horizon. It needs to point out that, the mean-field term causes nontrivial troubles in the classical method by the Riccati equation. The key point to cover this gap is to take the expectation and the deviation of state variable as dual variables. Moreover, similar to the LQ problem for classic stochastic system, the LQ problem for stochastic mean-field system on finite horizon is uniquely open-loop solvable, and its open-loop optimal control has a closed-loop form which is well presented by the dual Riccati equations and dual variables.
After that, there were many works on this topic. Huang, Li and Yong \cite{ref23} extended this result to infinite horizon (but not in a ergodic form) in 2015; Yong \cite{ref25} investigated the time-consistent properties of the optimal control in 2017; Sun \cite{ref26} studied the indefinite case and open-loop solvability of such kind of system in the same year, to name but a few.

On the other hand, the stochastic ergodic control problem has a history as long as LQ control problem. A pioneer work \cite{wo} in 1967 by Wonham first addressed the ergodic control problem for a stochastic state equation without control in the diffusion term. Since then this control system has garnered significant attentions from the research community.
We only recall some results on ergodic LQ control problems. In 2009 two works \cite{ref14,ref15} by Guatteri and Tessitore were concerned with the ergodic LQ optimal control problem with stochastic coefficients in 2009;
Mei, Wei and Yong \cite{ref21} studied the ergodic LQ optimal control problem with constant coefficients in 2021, in which the invariant measure plays a role in the depict of optimal control and the cost functional on infinite horizon degenerates to an integration with respect to invariant measure. Obviously, systems with constant coefficients inherently exhibit stationary and periodic properties.

As for the stochastic optimal control problems with periodic coefficients, there are only few results. Sun-Yong \cite{ref30} investigated the exponential turnpike property of LQ optimal control problem in 2024, which demonstrated an approximating of the solution of LQ optimal control problem  to the solution of a dynamic optimization problem. Ma \cite{ref31} studied the ergodic optimal control problem for stochastic system with Lipschitz and periodic coefficients and derived the pathwise periodic property for a type of HJB equation by the dynamic programming method in 2024.

Different from existing results, we study the ergodic LQ closed-loop optimal control problems for mean-field stochastic differential equation with periodic coefficients in this paper. Inspired by the periodic measure appearing in Feng and Zhao \cite{fe-zh} and Sun and Yong \cite{ref30}, we get a family of periodic measures, depending on time variable within a period, from the state equation with periodic coefficients. With the help of periodic measures, an equivalent form of ergodic LQ cost functional on a single periodic interval without limit is obtained. Then the optimal solutions of ergodic LQ optimal control problem are presented by periodic measures. As far as we know, there is no result to connect optimal solutions of stochastic control problems on infinite horizon with the periodic measures.

The rest of this paper is organized as follows. In Section 2, we introduce the concerned ergodic LQ optimal control problem and prove some useful estimates on stochastic mean-field system. Section 3 is devoted to the long time asymptotic behavior of the mean-field stochastic differential equation and its periodic measures are proved.
Then the solutions of the ergodic LQ optimal control problem are presented based on the periodic measures, periodic Riccati equations and other related ordinary differential equation (ODE) in Section 4. Finally, we give an example for the concerned control system to demonstrate our theoretical results in Section 5.

\section{Notation and Control Problem}
\label{sec:main}

Let $(\Omega,\mathscr{F},\mathbb{P})$ be a complete probability space, on which a standard $1$-dimensional Brownian motion $\left\{W_t,t\ge 0\right\}$ is defined with $\mathbb{F}=\left\{\mathscr{F}_t\right\}_{t\ge0}$ be the natural filtration of $W$ augmented by the $\mathbb{P}$-null sets in $\mathscr{F}$.

For any $T\in(0,\infty]$ and Euclidean space $E$, define\\
{\small		$\bullet$ $L_{\mathscr{F}}^2(\Omega;E)=\{\xi:\Omega\rightarrow E\Big|\xi\ \text{is}\ \mathscr{F}\text{-measurable},\mathbb{E}|\xi|^2<\infty\}$,\\
		$\bullet$ $L_{\mathbb{F}}^2(0,T;E)=\{X:\left[0,T\right]\times\Omega\rightarrow E\Big|X\ \text{is}\ \mathbb{F}\text{-progressively measurable},\ \mathbb{E}\int_0^{T}|X_t|^2\d t<\infty\}$,\\
$\bullet$ $L_{\mathbb{F}}^{2,loc}(0,\infty;E)= \bigcap\limits_{T>0}L_{\mathbb{F}}^2(0,T;E)$.}

Denote by $\mathbb{S}^n$ the space of all $n\times n$ symmetric real matrices, by $\mathbb{S}_{+}^n$ the space of all $n\times n$ positive definite real matrices and by $\bar{\mathbb{S}}_{+}^n$ the space of all $n\times n$ positive semi-definite real matrices.
For a measurable function $f:[0,\infty)\rightarrow\mathbb{S}^n$, $f$ is called uniformly positive definite if there exists an $\alpha>0$ such that
\begin{equation*}
	\begin{aligned}
		(f_t-\alpha I_n)\in\bar{\mathbb{S}}_{+}^n\ {\rm for}\ {\rm all}\ t\in[0,\infty).
	\end{aligned}
	\nonumber
\end{equation*}

For a $\mathbb{R}^m$-valued control process $u$ and a given $x\in\mathbb{R}^n$, the linear mean-field state equation we study is as follow
\begin{equation}\label{wz3}
\left\{\begin{aligned}
	\d X_t=&(A_tX_t+\overline{A}_t\mathbb{E}X_t+B_tu_t+\overline{B}_t\mathbb{E}u_t+b_t)\d t\\
	&+(C_tX_t+\overline{C}_t\mathbb{E}X_t+D_tu_t+\overline{D}_t\mathbb{E}u_t+\sigma_t)\d W_t,\ \ \ t\ge 0,\\
	X_0=&x,
\end{aligned}\right.
\end{equation}
where $A,\overline{A},C,\overline{C}:[0,\infty)\rightarrow\mathbb{R}^{n\times n}$, $B,\overline{B},D,\overline{D}:[0,\infty)\rightarrow\mathbb{R}^{n\times m}$, $b:[0,\infty)\rightarrow\mathbb{R}^n$ and $\sigma:[0,\infty)\rightarrow\mathbb{R}^n$ are measurable functions.

For $T>0$, define
\begin{equation}\label{wz4}
	\begin{aligned}
		J_T(x,u)=\mathbb{E}\int_0^TF(t,X_t,\mathbb{E}X_t,u_t,\mathbb{E}u_t)\d t,
	\end{aligned}
\end{equation}
where
\begin{equation*}
	\begin{aligned}
	F(t,X_t,\mathbb{E}X_t,u_t,\mathbb{E}u_t)
	=&\left\langle Q_tX_t,X_t\right\rangle+2\left\langle S_tX_t,u_t\right\rangle+\left\langle R_tu_t,u_t\right\rangle+2\left\langle q_t,X_t\right\rangle+2\left\langle \rho_t,u_t\right\rangle\\
	&+\left\langle \overline{Q}_t\mathbb{E}X_t,\mathbb{E}X_t\right\rangle+2\left\langle \overline{S}_t\mathbb{E}X_t,\mathbb{E}u_t\right\rangle+\left\langle \overline{R}_t\mathbb{E}u_t,\mathbb{E}u_t\right\rangle,
	\end{aligned}
\end{equation*}
and $Q,\overline{Q}:[0,\infty)\rightarrow\mathbb{S}^{n}$, $S,\overline{S}:[0,\infty)\rightarrow\mathbb{R}^{m\times n}$, $R,\overline{R}:[0,\infty)\rightarrow\mathbb{S}^{m}$, $q:[0,\infty)\rightarrow\mathbb{R}^n$, $\rho:[0,\infty)\rightarrow\mathbb{R}^m$ are measurable functions, $\left\langle\cdot,\cdot\right\rangle$ denotes the inner product of two matrices in the sense that $\left\langle M,N\right\rangle=\text{tr}\left(M^\top N\right)$ for any $M,N\in\mathbb{R}^{m\times n}$.
The cost functional is as follow
\begin{equation*}
	\begin{aligned}
		\mathcal{E}(x,u)=\varliminf\limits_{T \to \infty}\frac{1}{T}J_T(x,u).
	\end{aligned}
\end{equation*}

We study the closed-loop control for the above state equation and cost functional. So for $t\in[0,T]$, the control-state pair $(u^{0,x,\Theta,\overline{\Theta},v}, X^{0,x,\Theta,\overline{\Theta},v})$ has a form
\begin{equation}\label{wz1}
	\begin{aligned}
u^{0,x,\Theta,\overline{\Theta},v}_t=\Theta_tX^{0,x,\Theta,\overline{\Theta},v}_t
+\overline{\Theta}_t\mathbb{E}X^{0,x,\Theta,\overline{\Theta},v}_t+v_t
	\end{aligned}
\end{equation}
and
{\small\begin{equation}\label{wz2}
	\left\{\begin{aligned}
		\d X^{0,x,\Theta,\overline{\Theta},v}_t=&\Big((A_t+B_t\Theta_t)X^{0,x,\Theta,\overline{\Theta},v}_t
+(\overline{A}_t+B_t\overline{\Theta}_t+\overline{B}_t\Theta_t
+\overline{B}_t\overline{\Theta}_t)\mathbb{E}X^{0,x,\Theta,
\overline{\Theta},v}_t\\
		&+(B_t+\overline{B}_t)v_t+b_t\Big)\d t\\
		&+\Big((C_t+D_t\Theta_t)X^{0,x,\Theta,\overline{\Theta},v}_t+(\overline{C}_t+D_t\overline{\Theta}_t
+\overline{D}_t\Theta_t+\overline{D}_t\overline{\Theta}_t)\mathbb{E}X^{0,x,\Theta,\overline{\Theta},v}_t\\
		&+(D_t+\overline{D}_t)v_t+\sigma_t\Big)\d W_t,\ \ \ t\ge 0,\\
		X^{0,x,\Theta,\overline{\Theta},v}_0=&x,
	\end{aligned}\right.
\end{equation}}
where $(\Theta,\overline{\Theta},v):[0,T]\rightarrow\mathbb{R}^{n\times n}\times\mathbb{R}^{n\times n}\times\mathbb{R}^{n}$ is a measurable triple function. Similarly, we denote by $X^{t_0,x,\Theta,\overline{\Theta},v}_t$, $t\geq t_0$, the solution to (\ref{wz2}) with the initial time $0$ replaced by given $t_0\geq0$. By defining $X^{t_0,x,\Theta,\overline{\Theta},v}_t\equiv x$, $0\leq t\leq t_0$, we get a solution $X^{t_0,x,\Theta,\overline{\Theta},v}$ to (\ref{wz2}) on $[0,\infty)$.

Denote by $\mathbb{U}$ the set of measurable functions $(\Theta,\overline{\Theta},v)$ which guarantees the control-state pair $(u^{0,x,\Theta,\overline{\Theta},v}, X^{0,x,\Theta,\overline{\Theta},v})\in L_{\mathbb{F}}^{2,loc}(0,\infty;\mathbb{R}^{m})\times L_{\mathbb{F}}^{2,loc}(0,\infty;\mathbb{R}^{n})$ and will be further specified. Then we define the admissible set
\begin{equation*}
	\begin{aligned}		\mathcal{U}=\Big\{u^{0,x,\Theta,\overline{\Theta},v}\in L_{\mathbb{F}}^{2,loc}(0,\infty;\mathbb{R}^{m})\Big|u^{0,x,\Theta,\overline{\Theta},v}\ {\rm satisfies}\ (\ref{wz1})\ {\rm with}\ (\Theta,\overline{\Theta},v)\in\mathbb{U}\Big\}.
	\end{aligned}
\end{equation*}

Now the ergodic control problem for the stochastic LQ mean-field system can be formulated.

\textbf{Problem (CL-MFLQE).} Find a closed-loop control $u^{0,x,\Theta^*,\overline{\Theta}^*,v^*}\in\mathcal{U}$ such that
\begin{equation*}
	\begin{aligned}
		\mathcal{E}(x,u^{0,x,\Theta^*,\overline{\Theta}^*,v^*})=
\inf\limits_{u^{0,x,\Theta,\overline{\Theta},v}\in\mathcal{U}}\mathcal{E}(x,u^{0,x,\Theta,\overline{\Theta},v}).
	\end{aligned}
\end{equation*}

\textbf{Problem (CL-MFLQE)} is called closed-loop solvable, if there exists an optimal control $u^{0,x,\Theta^*,\overline{\Theta}^*,v^*}\in\mathcal{U}$, which together with its corresponding optimal state $X^{0,x,\Theta^*,\overline{\Theta}^*,v^*}$ constitutes an optimal control-state pair. The function $$V(x)=\inf\limits_{u^{0,x,\Theta,\overline{\Theta},v}\in\mathcal{U}}\mathcal{E}(x,u^{0,x,\Theta,\overline{\Theta},v})$$ is called the value function of \textbf{Problem (CL-MFLQE)}.

Before the assumptions on the coefficients are presented, we clarify some basic notation.
\begin{defn}
Given $\tau>0$, a measurable function $f:[0,\infty)\rightarrow E$ is called $\tau$-periodic if
\begin{equation*}
	\begin{aligned}
		f_{t+\tau}=f_t\ \ \ {\rm for}\ {\rm any}\ t\geq0.
	\end{aligned}
	\nonumber
\end{equation*}
\end{defn}
Then we define some spaces of $\tau$-periodic functions\\
$\bullet$ $\mathscr{B}_\tau(E)=\Big\{f:[0,\infty)\rightarrow E\Big|f\text{ is Lebesgue essentially bounded and }\tau\text{-periodic}\Big\}$,\\
$\bullet$ $\mathscr{C}_\tau(E)=\Big\{f:[0,\infty)\rightarrow E\Big|f\text{ is continuous and }\tau\text{-periodic}\Big\}$,\\
$\bullet$ $\mathscr{D}_\tau(E)=\Big\{f:[0,\infty)\rightarrow E\Big|f\text{ and its derivative }\dot{f} \text{ are both in }\mathscr{B}_\tau(E)\Big\}$.\\
We use the brief notation $\mathscr{B}_\tau,\mathscr{C}_\tau,\mathscr{D}_\tau$ if the value space $E$ is clear in above definitions. To avoid heavy notation, for $t\geq0$, we also denote
\begin{equation*}
	\begin{aligned}
		&\widehat{A}_t=A_t+\overline{A}_t,\ \widehat{B}_t=B_t+\overline{B}_t,\ \widehat{C}_t=C_t+\overline{C}_t,\ \widehat{D}_t=D_t+\overline{D}_t,\\
		&\widehat{Q}_t=Q_t+\overline{Q}_t,\ \widehat{S}_t=S_t+\overline{S}_t,\ \widehat{R}_t=R_t+\overline{R}_t,\ \widehat{\Theta}_t=\Theta_t+\overline{\Theta}_t.
	\end{aligned}
	\nonumber
\end{equation*}
To deal with the infinite horizon state equation, we introduce the mean-square exponentially stabilizability.
\begin{definition}\label{wz8} (Definition 3.7 in \cite{ref30}) For $\tau>0$, let $A,C:[0,\infty)\rightarrow\mathbb{R}^{n\times n}$ and $B,D:[0,\infty)\rightarrow\mathbb{R}^{n\times m}$ be $\tau$-periodic matrix-valued measurable functions. The 4-tuple of coefficients $[A,C;B,D]$ is $\tau$-periodic mean-square exponentially stabilizable if there exist $\Theta\in \mathscr{B}_\tau(\mathbb{R}^{m\times n})$ and $M,\lambda>0$ such that the homogeneous closed-loop system
{\small\begin{equation}\label{homogeneous system}
\left\{\begin{aligned}
	\d\Phi_t&=(A_t+B_t\Theta_t)\Phi_t\d t+(C_t+D_t\Theta_t)\Phi_t\d W_t,\ \ \ t\ge 0,\\
	\Phi_0&=I_n,
\end{aligned}\right.
\end{equation}}
has a unique solution $\Phi\in L_{\mathbb{F}}^{2}(0,\infty;\mathbb{R}^{n\times n})$ and for any $t\ge 0$,
$\mathbb{E}|\Phi_t|^2\le Me^{-\lambda t}$.
Here $\Theta$ is called a $\tau$-periodic stabilizer of $[A,C;B,D]$
and the set of all $\tau$-periodic stabilizers of $[A,C;B,D]$ is denoted by $\mathscr{S}_\tau[A,C;B,D]$.
\end{definition}

Then we introduce the assumptions on coefficients in (\ref{wz3}) and (\ref{wz4}).
\begin{ass}
{\rm\textbf{(A1)}} For a given $\tau>0$, $A,\overline{A},B,\overline{B},C,\overline{C},D,\overline{D}, b,\sigma, Q,\overline{Q},S,\\\overline{S},R,\overline{R},q,\rho\in\mathscr{B}_\tau$.\\
{\rm\textbf{(A2)}} $R,\widehat{R},Q-S^\top R^{-1}S,\widehat{Q}-\widehat{S}^\top\widehat{R}^{-1}\widehat{S}$ are uniformly positive definite.\\
{\rm\textbf{(A3)}} Both $[A,C;B,D]$ and $[\widehat{A},0;\widehat{B},0]$ are $\tau$-periodic mean-square exponentially stabilizable.
\end{ass}

Based on \textbf{(A1)} and \textbf{(A3)} the coefficients in (\ref{wz3}) and (\ref{wz4}) satisfy, we specify the $\mathbb{U}$ in the admissible control set, i.e.
{\small\begin{equation*}
	\begin{aligned}		\mathbb{U}=\Big\{(\Theta,\overline{\Theta},v)\Big|\Theta\in\mathscr{S}_\tau[A,C;B,D],
\widehat{\Theta}\in\mathscr{S}_\tau[\widehat{A},0;\widehat{B},0],v\in\mathscr{B}_\tau(\mathbb{R}^{m})\Big\}.
	\end{aligned}
\end{equation*}}

Obviously, the above $\mathbb{U}$ guarantees $(u^{0,x,\Theta,\overline{\Theta},v}, X^{0,x,\Theta,\overline{\Theta},v})\in L_{\mathbb{F}}^{2,loc}(0,\infty;\mathbb{R}^{m})\times L_{\mathbb{F}}^{2,loc}(0,\infty;\mathbb{R}^{n})$ and the well-posedness of ergodic cost functional, i.e. for any $T>0$,
{\small\begin{equation*}
	\begin{aligned}		\frac{1}{T}\mathbb{E}\int_0^TF(t,X^{0,x,\Theta,\overline{\Theta},v}_t,\mathbb{E}X^{0,x,\Theta,\overline{\Theta},v}_t,u^{0,x,\Theta,\overline{\Theta},v}_t,\mathbb{E}u^{0,x,\Theta,\overline{\Theta},v}_t)\d t<\infty.
	\end{aligned}
\end{equation*}}

Based on the periodic property, we have an estimation for the solution to the homogeneous closed-loop system (\ref{homogeneous system}).
\begin{lemma}\label{wz5} (Corollary 3.4 in \cite{ref30}) For a given $\tau>0$, assume $A,B,C,D\in\mathscr{B}_\tau$ and that $[A,C;B,D]$ is $\tau$-periodic mean-square exponentially stabilizable. Then there exist $M,\lambda>0$ depending only on given parameters such that for any $0\le s\le t$, the solution $\Phi$ to \eqref{homogeneous system} satisfies
$\mathbb{E}\big|\Phi_t\Phi_s^{-1}\big|^2\le Me^{-\lambda(t-s)}$.
\end{lemma}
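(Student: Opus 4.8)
The plan is to reduce the two-parameter quantity $\mathbb{E}\big|\Phi_t\Phi_s^{-1}\big|^2$ to the one-parameter decay already guaranteed by Definition~\ref{wz8}, exploiting the cocycle (flow) structure of the linear matrix SDE together with the $\tau$-periodicity of the coefficients. Write $\widetilde{A}_t=A_t+B_t\Theta_t$ and $\widetilde{C}_t=C_t+D_t\Theta_t$, which lie in $\mathscr{B}_\tau$, and for $s\ge0$ let $\Phi^s$ denote the solution of \eqref{homogeneous system} started from the identity at time $s$, i.e.\ $\Phi^s_s=I_n$ and $\d\Phi^s_r=\widetilde{A}_r\Phi^s_r\,\d r+\widetilde{C}_r\Phi^s_r\,\d W_r$ for $r\ge s$, so that $\Phi=\Phi^0$. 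First I would record the standard facts that $\Phi_s$ is a.s.\ invertible and that the flow identity $\Phi_t=\Phi^s_t\Phi_s$ holds for $t\ge s$: both sides solve the same linear SDE on $[s,\infty)$ with common value $\Phi_s$ at $t=s$, so uniqueness applies. Consequently $\Phi_t\Phi_s^{-1}=\Phi^s_t$, and it suffices to bound $\mathbb{E}|\Phi^s_t|^2$ uniformly in $s$.

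Two ingredients feed the estimate. (i) A uniform short-interval bound: for $0\le s\le r$ with $r-s\le2\tau$, an It\^o/Gronwall computation on $|\Phi^s_r|^2$ using only $\|\widetilde{A}\|_\infty,\|\widetilde{C}\|_\infty$ gives $\mathbb{E}|\Phi^s_r|^2\le K$ with $K=ne^{2C\tau}$ independent of $s$. (ii) A periodic shift identity: since $\widetilde{A},\widetilde{C}$ are $\tau$-periodic, for integers $p\le q$ the process $r\mapsto\Phi^{p\tau}_{r+p\tau}$ solves the same SDE in law as $\Phi^0$, because the shifted process $W_{\cdot+p\tau}-W_{p\tau}$ is again a Brownian motion and the coefficients are unchanged by the shift. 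Hence $\Phi^{p\tau}_{q\tau}\stackrel{d}{=}\Phi_{(q-p)\tau}$, so that $\mathbb{E}|\Phi^{p\tau}_{q\tau}|^2=\mathbb{E}|\Phi_{(q-p)\tau}|^2\le Me^{-\lambda(q-p)\tau}$ by Definition~\ref{wz8}.

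The main step is the decomposition. If $t-s\le2\tau$, then $\mathbb{E}|\Phi^s_t|^2\le K\le Ke^{2\lambda\tau}e^{-\lambda(t-s)}$ by (i) alone. If $t-s>2\tau$, set $s^+=\lceil s/\tau\rceil\tau$ and $t^-=\lfloor t/\tau\rfloor\tau$, so that $s\le s^+\le t^-\le t$ with $s^+,t^-$ multiples of $\tau$, and split by the cocycle property $\Phi^s_t=\Phi^{t^-}_t\,\Phi^{s^+}_{t^-}\,\Phi^{s}_{s^+}$. The three factors are measurable functionals of the Brownian increments over the disjoint intervals $[t^-,t]$, $[s^+,t^-]$, $[s,s^+]$ respectively (the coefficients being deterministic), hence are mutually independent; submultiplicativity of the Frobenius norm together with independence then yields $\mathbb{E}|\Phi^s_t|^2\le\mathbb{E}|\Phi^{t^-}_t|^2\,\mathbb{E}|\Phi^{s^+}_{t^-}|^2\,\mathbb{E}|\Phi^{s}_{s^+}|^2$. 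Bounding the two short factors by $K$ via (i), the middle, phase-aligned factor by (ii), and using $t^--s^+\ge(t-s)-2\tau$, gives $\mathbb{E}|\Phi^s_t|^2\le K^2Me^{2\lambda\tau}e^{-\lambda(t-s)}$. Taking $M'=\max\{K,K^2M\}e^{2\lambda\tau}$ and $\lambda'=\lambda$ proves the claim, with constants depending only on $\tau$, $\lambda$, $M$ and $\|\widetilde{A}\|_\infty,\|\widetilde{C}\|_\infty$.

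I expect the only delicate points to be the independence of the three factors, which rests on the coefficients being deterministic so that the solution on each subinterval is a measurable functional of the Brownian increments there, and the distributional shift identity in (ii); these are precisely where $\tau$-periodicity is genuinely used, while the Gronwall bound in (i) and norm submultiplicativity are routine.
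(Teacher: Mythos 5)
Your proof is correct, but there is nothing in the paper to compare it against: the paper does not prove Lemma \ref{wz5} at all, importing it verbatim as Corollary 3.4 of Sun--Yong \cite{ref30}. Your argument therefore supplies a self-contained justification where the paper relies on a citation. The route you take --- reduce $\Phi_t\Phi_s^{-1}$ to $\Phi^s_t$ via the flow identity, split $\Phi^s_t=\Phi^{t^-}_t\,\Phi^{s^+}_{t^-}\,\Phi^s_{s^+}$ at the period multiples $s^+=\lceil s/\tau\rceil\tau$ and $t^-=\lfloor t/\tau\rfloor\tau$, control the two short factors by It\^o/Gronwall, identify the law of the phase-aligned middle factor with that of $\Phi_{t^--s^+}$ by $\tau$-periodicity plus uniqueness in law, and multiply second moments using independence --- is sound, and the three points you flag as delicate are exactly the load-bearing ones, all of which hold: $\Phi^a_b$ is a measurable functional of the increments $\{W_r-W_a,\ a\le r\le b\}$ because the coefficients are deterministic and the strong solution is produced by Picard iteration from the driving noise (this gives the mutual independence and hence the factorization $\mathbb{E}\big[|X|^2|Y|^2|Z|^2\big]=\mathbb{E}|X|^2\,\mathbb{E}|Y|^2\,\mathbb{E}|Z|^2$ after submultiplicativity of the Frobenius norm); the shift identity uses that $W_{\cdot+p\tau}-W_{p\tau}$ is again a Brownian motion; and the short-interval bound needs only essential boundedness of $A+B\Theta$ and $C+D\Theta$, with a routine localization to make the stochastic integral a true martingale. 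The bookkeeping $t^--s^+\ge(t-s)-2\tau$ and the final constants $M'=\max\{K,K^2M\}e^{2\lambda\tau}$, $\lambda'=\lambda$ are right, and they depend only on the given parameters as required. Compared with whatever internal derivation \cite{ref30} uses (a periodic Lyapunov-function argument is the standard one), your proof is more elementary --- it invokes nothing beyond Definition \ref{wz8}, periodicity, weak uniqueness, and independence of Brownian increments --- at the harmless cost of the factor $e^{2\lambda\tau}$ in the constant.
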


With the help of Lemma \ref{wz5}, we have the following estimates for the closed-loop state equation.
\begin{proposition}\label{closed-loop estimate} For a given $\tau>0$, assume $A,\overline{A},B,\overline{B},C,\overline{C},D,\overline{D}, b,\sigma\in\mathscr{B}_\tau$ and {\rm\textbf{(A3)}}. The closed-loop state equation (\ref{wz2}) with initial time $t_0\geq0$, initial state $x\in\mathbb{R}^n$ and $(\Theta,\overline{\Theta},v)\in\mathbb{U}$ has a unique solution $X^{t_0,x,\Theta,\overline{\Theta},v}\in L_{\mathbb{F}}^{2,loc}(0,\infty;\mathbb{R}^{n})$, and there exist $M,\lambda>0$ depending only on given parameters such that for any $t\ge t_0$,
{\small\begin{equation}\label{wz6}
	\begin{aligned}
		&\big|\mathbb{E}X^{t_0,x,\Theta,\overline{\Theta},v}_t\big|^2\le M(1+e^{-\lambda(t-t_0)}|x|^2),\\ &\mathbb{E}\big|X^{t_0,x,\Theta,\overline{\Theta},v}_t-\mathbb{E}X^{t_0,x,\Theta,\overline{\Theta},v}_t\big|^2\le M(1+e^{-\frac{\lambda}{2}(t-t_0)}|x|^2).
	\end{aligned}
\end{equation}}
For another solution $X^{t_0,x',\Theta',\overline{\Theta}',v'}$ with initial state $x'\in\mathbb{R}^n$ and $(\Theta',\overline{\Theta}',v')\in\mathbb{U}$
satisfying (\ref{wz6}) with $M',\lambda'>0$ as $X^{t_0,x,\Theta,\overline{\Theta},v}$ with $M,\lambda>0$, there exists $\hat{M}>0$ depending only on given parameters such that for any $t\ge t_0$,
{\small\begin{equation}\label{wz7}
	\begin{aligned}
& \big|\mathbb{E}[X^{t_0,x,\Theta,\overline{\Theta},v}_t-X^{t_0,x',\Theta',\overline{\Theta}',v'}_t]\big|^2 \\ \le& \hat{M}e^{-{\lambda\vee \lambda'}(t-t_0)}|x-x'|^2\\
&+\hat{M}\int_{t_0}^t\Big((1+|x|^2+|x'|^2)(|\Theta_s-\Theta'_s|^2
+|\overline{\Theta}_s-\overline{\Theta}'_s|^2)+|v_s-v'_s|^2\Big)\d s.
    \end{aligned}
\end{equation}}
Moreover, if $\Theta_\cdot=\Theta'_\cdot$, it yields that
{\small\begin{equation}\label{wz10}
	\begin{aligned} &\mathbb{E}\big|X^{t_0,x,\Theta,\overline{\Theta},v}_t-X^{t_0,x',\Theta',\overline{\Theta}',v'}_t
-\mathbb{E}[X^{t_0,x,\Theta,\overline{\Theta},v}_t-X^{t_0,x',\Theta',\overline{\Theta}',v'}_t]\big|^2\\
	\le &Me^{-\frac{\lambda\vee \lambda'}{2}(t-t_0)}|x-x'|^2+M\int_{t_0}^t\Big((1+|x|^2+|x'|^2)|\overline{\Theta}_s-\overline{\Theta}'_s|^2
+|v_s-v'_s|^2\Big)\d s.
	\end{aligned}
\end{equation}}
\end{proposition}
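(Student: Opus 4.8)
The plan is to decouple the mean-field structure by splitting the state into its expectation and its fluctuation, reducing (\ref{wz2}) to a deterministic linear ODE for the mean and a linear SDE with deterministic forcing for the deviation, each governed by a homogeneous part that is mean-square exponentially stable through the relevant stabilizer in $\mathbb{U}$. Setting $m_t:=\mathbb{E}X^{t_0,x,\Theta,\overline{\Theta},v}_t$ and $Y_t:=X^{t_0,x,\Theta,\overline{\Theta},v}_t-m_t$, I would first take expectations in (\ref{wz2}); using $\widehat{A}=A+\overline{A}$ and $\widehat{B}=B+\overline{B}$ the coefficients of $m_t$ collapse to $\widehat{A}_t+\widehat{B}_t\widehat{\Theta}_t$, so that $\dot m_t=(\widehat{A}_t+\widehat{B}_t\widehat{\Theta}_t)m_t+\widehat{B}_t v_t+b_t$ with $m_{t_0}=x$. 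Subtracting this from (\ref{wz2}) and writing $X=Y+m$, the $m_t$-terms cancel in the drift by the same hat identity, leaving $\d Y_t=(A_t+B_t\Theta_t)Y_t\,\d t+[(C_t+D_t\Theta_t)Y_t+g_t]\,\d W_t$ with $Y_{t_0}=0$, where $g_t:=(\widehat{C}_t+\widehat{D}_t\widehat{\Theta}_t)m_t+\widehat{D}_t v_t+\sigma_t$ is deterministic. Existence and uniqueness in $L^{2,loc}_{\mathbb{F}}$ then follow by solving the mean ODE first and substituting $m$ into the (now standard, non-mean-field) linear SDE for $X$, whose expectation is checked to coincide with $m$.

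For the first bound in (\ref{wz6}), since $\widehat{\Theta}\in\mathscr{S}_\tau[\widehat{A},0;\widehat{B},0]$ the homogeneous part of the mean ODE is (\ref{homogeneous system}) for the degenerate tuple $[\widehat{A},0;\widehat{B},0]$, whose fundamental solution $\Psi$ is deterministic; Lemma \ref{wz5} then reads $|\Psi_t\Psi_s^{-1}|^2\le Me^{-\lambda(t-s)}$. Variation of constants $m_t=\Psi_t\Psi_{t_0}^{-1}x+\int_{t_0}^t\Psi_t\Psi_s^{-1}(\widehat{B}_s v_s+b_s)\,\d s$ together with the boundedness of $\widehat{B},v,b$ (all in $\mathscr{B}_\tau$) and $\int_{t_0}^t e^{-\lambda(t-s)/2}\,\d s\le 2/\lambda$ yields $|m_t|\le\sqrt{M}e^{-\lambda(t-t_0)/2}|x|+C$, hence the claimed $|m_t|^2\le M(1+e^{-\lambda(t-t_0)}|x|^2)$; passing to common constants $M,\lambda$ (the smaller of the two rates) unifies the two rates appearing in the statement.

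For the second bound in (\ref{wz6}), I would apply Lemma \ref{wz5} to $[A,C;B,D]$ with stabilizer $\Theta$ to get the transition estimate $\mathbb{E}|\Phi_t\Phi_s^{-1}|^2\le Me^{-\lambda(t-s)}$ for the homogeneous flow $\Phi$ of the $Y$-equation, and then establish the convolution estimate $\mathbb{E}|Y_t|^2\le M\int_{t_0}^t e^{-\lambda(t-s)}|g_s|^2\,\d s$. Inserting $|g_s|^2\le C(1+|m_s|^2)\le C(1+e^{-\lambda(s-t_0)}|x|^2)$ from the previous step, the constant part integrates to a constant while the $|x|^2$ part gives $\int_{t_0}^t e^{-\lambda(t-s)}e^{-\lambda(s-t_0)}\,\d s=(t-t_0)e^{-\lambda(t-t_0)}\le Ce^{-\lambda(t-t_0)/2}$, which is exactly the source of the halved rate $e^{-\lambda(t-t_0)/2}$ in (\ref{wz6}). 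The main obstacle lies precisely in this convolution estimate: the only stability input is the mean-square transition bound, not any pointwise dissipativity, so the naive It\^o energy inequality for $\mathbb{E}|Y_t|^2$ does not close. I would instead use the representation $Y_t=\Phi_t\big(\int_{t_0}^t\Phi_s^{-1}(-c_sg_s)\,\d s+\int_{t_0}^t\Phi_s^{-1}g_s\,\d W_s\big)$ with $c_s:=C_s+D_s\Theta_s$, handling the delicate stochastic-convolution term by exploiting that the transition matrix $\Phi_{t,s}:=\Phi_t\Phi_s^{-1}$ is independent of $\mathscr{F}_s$ and obeys the decay of Lemma \ref{wz5}; concretely I would iterate over the periods $[t_0+k\tau,t_0+(k+1)\tau]$, propagating the second moment across each period via $\mathbb{E}|\Phi_{t,s}Y_s|^2=\mathbb{E}[\mathbb{E}(|\Phi_{t,s}|^2\mid\mathscr{F}_s)|Y_s|^2]\le Me^{-\lambda(t-s)}\mathbb{E}|Y_s|^2$ and controlling the within-period forced part by a standard Gr\"onwall estimate on the fixed-length interval.

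Finally, the difference estimates (\ref{wz7}) and (\ref{wz10}) follow the same template applied to the differences of the two decoupled systems, using the freedom to factor each difference through whichever of the two homogeneous flows decays faster so as to obtain the rate $\lambda\vee\lambda'$. For (\ref{wz7}), $\delta_t:=m_t-m'_t$ solves a linear ODE whose homogeneous part may be taken as $\widehat{A}_t+\widehat{B}_t\widehat{\Theta}_t$ or $\widehat{A}_t+\widehat{B}_t\widehat{\Theta}'_t$, with forcing $\widehat{B}_t(\widehat{\Theta}_t-\widehat{\Theta}'_t)m'_t+\widehat{B}_t(v_t-v'_t)$ (respectively with $m_t$) and $\delta_{t_0}=x-x'$; deterministic variation of constants produces the $e^{-(\lambda\vee\lambda')(t-t_0)}|x-x'|^2$ term from the fundamental solution, while Cauchy--Schwarz against the weight $e^{-\lambda(t-s)/2}\le 1$ together with $|m'_s|^2\le M(1+|x'|^2)$ and $|\widehat{\Theta}_s-\widehat{\Theta}'_s|^2\le 2(|\Theta_s-\Theta'_s|^2+|\overline{\Theta}_s-\overline{\Theta}'_s|^2)$ turns the forcing into the stated integral. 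For (\ref{wz10}) the hypothesis $\Theta_\cdot=\Theta'_\cdot$ is essential: it makes the homogeneous parts of the two deviation SDEs identical, so that $\Delta_t:=Y_t-Y'_t$ solves $\d\Delta_t=(A_t+B_t\Theta_t)\Delta_t\,\d t+[(C_t+D_t\Theta_t)\Delta_t+(g_t-g'_t)]\,\d W_t$ with $\Delta_{t_0}=0$, to which the convolution estimate of the third paragraph applies; expanding $g_t-g'_t=(\widehat{C}_t+\widehat{D}_t\widehat{\Theta}_t)\delta_t+\widehat{D}_t(\overline{\Theta}_t-\overline{\Theta}'_t)m'_t+\widehat{D}_t(v_t-v'_t)$ (using $\widehat{\Theta}-\widehat{\Theta}'=\overline{\Theta}-\overline{\Theta}'$ when $\Theta=\Theta'$) and inserting the bound on $\delta$ just obtained, the same $(t-t_0)e^{-(\lambda\vee\lambda')(t-t_0)}\le Ce^{-(\lambda\vee\lambda')(t-t_0)/2}$ mechanism yields the halved rate in (\ref{wz10}).
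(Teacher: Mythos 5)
Your proposal is correct and follows essentially the same route as the paper's proof: the same splitting into the mean $\mathbb{E}X$ (a deterministic ODE driven by $\widehat{A}+\widehat{B}\widehat{\Theta}$) and the deviation $Z=X-\mathbb{E}X$ (a linear SDE whose noise carries a deterministic forcing), the same appeal to Lemma \ref{wz5} for the flows $\Psi$ and $\Phi$, the same variation-of-constants representation obtained by applying It\^{o}'s formula to $\Phi_t^{-1}Z_t$, and the same difference systems (with the flow of the faster-decaying primed system, and with $\Theta=\Theta'$ cancelling the drift perturbation) for (\ref{wz7}) and (\ref{wz10}). The only divergence is cosmetic: where you propose to close the stochastic-convolution estimate by iterating over periods with a Gr\"onwall bound inside each period, the paper does it in a single pass via a weighted Cauchy--Schwarz inequality (weight $e^{\pm\frac{\lambda}{2}(t-s)}$) combined with the It\^{o}-isometry-type bound and the decay $\mathbb{E}\big|\Phi_t\Phi_s^{-1}\big|^2\le Me^{-\lambda(t-s)}$, yielding the same halved rate $\frac{\lambda}{2}$ in (\ref{wz6}) and (\ref{wz10}).
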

\begin{proof}
The existence and uniqueness of solution to (\ref{wz2}) follows immediately from the essential boundedness of all coefficients. To prove (\ref{wz6}), without loss of generality, we take $t_0=0$ and use the simple notation $X_\cdot$ and $X'_\cdot$ for $X^{0,x,\Theta,\overline{\Theta},v}_\cdot$ and $X^{0,x,\Theta',\overline{\Theta}',v'}_\cdot$, respectively.

By {\rm\textbf{(A3)}} we have two matrix stochastic differential equations (SDEs) (\ref{homogeneous system}) and
{\small\begin{equation*}
\left\{\begin{aligned}
	\d\Psi_t&=(\widehat{A}_t+\widehat{B}_t\widehat{\Theta}_t)\Psi_t\d t,\ \ \ t\ge 0,\\
	\Psi_0&=I_n.
\end{aligned}\right.
\nonumber
\end{equation*}}
Also there exist $M,\lambda>0$ such that $|\Psi_t|^2\le Me^{-\lambda t}$.
For $t\geq0$, denote
$$Y_t=\mathbb{E}X_t,\ Y'_t=\mathbb{E}X'_t,\ Z_t=X_t-Y_t,\ Z'_t=X'_t-Y'_t.$$
Then $Y$ and $Z$ satisfy
\begin{equation*}
\left\{\begin{aligned}
	\d Y_t&=\Big((\widehat{A}_t+\widehat{B}_t\widehat{\Theta}_t)Y_t+\widehat{B}_tv_t+b_t\Big)\d t,\\
	Y_0&=x
\end{aligned}\right.
\end{equation*}
and
\begin{equation*}
	\left\{\begin{aligned}
	\d Z_t=&(A_t+B_t\Theta_t)Z_t\d t+\Big((C_t+D_t\Theta_t)Z_t+(\widehat{C}_t+\widehat{D}_t\widehat{\Theta}_t)Y_t+\widehat{D}_tv_t+\sigma_t\Big)\d W_t,\\
	Z_0=&0,
\end{aligned}\right.
\end{equation*}
respectively. Hence
\begin{equation*}
	\begin{aligned}
		Y_t=\Psi_tx+\int_0^t\Psi_t\Psi_s^{-1}\left(\widehat{B}_sv_s+b_s\right)\d s.
	\end{aligned}
\end{equation*}
Then, by Lemma \ref{wz5} for $\Psi$, we have
\begin{equation*}
	\begin{split}		|Y_t|^2\le&K\left|\Psi_tx\right|^2+K\left|\int_0^t\Psi_t\Psi_s^{-1}\left(\widehat{B}_sv_s+b_s\right)\d s\right|^2\\
		\le&Ke^{-\lambda t}|x|^2+\left|\int_0^tKe^{-\frac{\lambda}{2}(t-s)}\d s\right|^2
		\le K(1+e^{-\lambda t}|x|^2).
	\end{split}
\end{equation*}
Here and in the rest of this paper, $K$ is a generic constant which may change from line to line.
On the other hand, by applying It\^{o} formula to $\Phi_t^{-1}Z_t$ we have
\begin{equation*}\label{wz9}
	\begin{aligned}
		Z_t
		=&\Phi_t\int_0^t\d (\Phi_s^{-1}Z_s)\\
		=&\Phi_t\int_0^t\left(-\Phi_s^{-1}(C_s+D_s\Theta_s)\left((\widehat{C}_s+\widehat{D}_s\widehat{\Theta}_s)Y_s+\widehat{D}_sv_s+\sigma_s\right)\right)\d s\\
		&+\int_0^t\left(\Phi_s^{-1}\left((\widehat{C}_s+\widehat{D}_s\widehat{\Theta}_s)Y_s+\widehat{D}_sv_s+\sigma_s\right)\right)\d W_s\\
		=&\int_0^t\left(-\Phi_t\Phi_s^{-1}(C_s+D_s\Theta_s)(\widehat{C}_s+\widehat{D}_s\widehat{\Theta}_s)Y_s\right)\d s\\
		&+\int_0^t\left(\Phi_t\Phi_s^{-1}(\widehat{C}_s+\widehat{D}_s\widehat{\Theta}_s)Y_s\right)\d W_s\\
		&+\int_0^t\left(-\Phi_t\Phi_s^{-1}(C_s+D_s\Theta_s)(\widehat{D}_sv_s+\sigma_s)\right)\d s\\
		&+\int_0^t\left(\Phi_t\Phi_s^{-1}(\widehat{D}_sv_s+\sigma_s)\right)\d W_s.
	\end{aligned}
\end{equation*}
Hence
\begin{equation*}
	\begin{aligned}
		&\mathbb{E}|Z_t|^2\\		
		\le&K\mathbb{E}\int_0^te^{-\frac{\lambda}{2}(t-s)}\d s\int_0^t e^{\frac{\lambda}{2}(t-s)} \left|\Phi_t\Phi_s^{-1}(C_s+D_s\Theta_s)(\widehat{C}_s+\widehat{D}_s\widehat{\Theta}_s)Y_s\right|^2\d s\\
		&+K\mathbb{E} \int_0^t\left|\Phi_t\Phi_s^{-1}(\widehat{C}_s+\widehat{D}_s\widehat{\Theta}_s)Y_s\right|^2\d s\\
		&+K\mathbb{E}\int_0^te^{-\frac{\lambda}{2}(t-s)}\d s\int_0^t e^{\frac{\lambda}{2}(t-s)}\left|\Phi_t\Phi_s^{-1}(C_s+D_s\Theta_s)(\widehat{D}_sv_s+\sigma_s)\right|^2\d s\\
		&+K\mathbb{E}\int_0^t\left|\Phi_t\Phi_s^{-1}(\widehat{D}_sv_s+\sigma_s)\right|^2\d s\\
		\le&\frac{2}{\lambda}K\int_0^te^{\frac{\lambda}{2}(t-s)}e^{-\lambda(t-s)}e^{-\lambda s}|x|^2\d s+ K\int_0^te^{-\lambda(t-s)}(1+e^{-\lambda s}|x|^2)\d s\\
		&+\frac{2}{\lambda}K\int_0^te^{\frac{\lambda}{2}(t-s)}e^{-\lambda(t-s)}\d s+K\int_0^te^{-\lambda(t-s)}\d s\\
		\le&K(1+e^{-\frac{\lambda}{2}t}|x|^2),
\end{aligned}
\end{equation*}
which implies (\ref{wz6}).

To prove (\ref{wz7}), for $t\geq0$, denote
\begin{equation}
	\begin{aligned}
		&\delta X_t=X_t-X'_t,\ \delta Y_t=Y_t-Y'_t,\ \delta Z_t=Z_t-Z'_t,\
		\delta x=x-x',\\
		&\delta \Theta_t=\Theta_t-\Theta'_t,\ \delta\widehat{\Theta}_t=\widehat{\Theta}_t-\widehat{\Theta}'_t,\ \delta v_t=v_t-v'_t.
	\end{aligned}
	\nonumber
\end{equation}
Assume without loss of generality that $\lambda'\ge\lambda$, we have
\begin{equation*}
\left\{\begin{aligned}
	\d\delta Y_t&=[(\widehat{A}_t+\widehat{B}_t\widehat{\Theta}'_t)\delta Y_t+\widehat{B}_t\delta\widehat{\Theta}_tY_t+\widehat{B}_t\delta v_t]\d t,\\
	\delta Y_0&=\delta x,
\end{aligned}\right.
\end{equation*}
and
\begin{equation*}
\left\{\begin{aligned}
	\d \delta Z_t=&[(A_t+B_t\Theta'_t)\delta Z_t+B_t\delta\Theta_tZ_t]\d t\\
	&+[(C_t+D_t\Theta'_t)\delta Z_t+D_t\delta\Theta_tZ_t+(\widehat{C}_t+\widehat{D}_t\widehat{\Theta}'_t)\delta Y_t\\
	&+\widehat{D}_t\delta\widehat{\Theta}_tY_t+\widehat{D}_t\delta v_t]\d W_t.\\
	\delta Z_0=&0.
\end{aligned}\right.
\end{equation*}
Then
\begin{equation*}
	\begin{split}
		|\delta Y_t|^2\le&K\left|{\Psi'_t}\delta x\right|^2+K\left|\int_0^t{\Psi'_t}({\Psi'_s})^{-1}\left(\widehat{B}_s\delta\widehat{\Theta}_sY_s+\widehat{B}_s\delta v_s\right)\d s\right|^2\\
		\le&Ke^{-\lambda't}|\delta x|^2+K\left|\int_0^te^{-\frac{\lambda'}{2}(t-s)}\left[(1+|x|)|\delta\widehat{\Theta}_s|+|\delta v_s|\right]\d s\right|^2\\
		\le& Ke^{-\lambda't}|\delta x|^2+\frac{K}{\lambda'}\int_0^t\left[(1+|x|^2)|\delta\widehat{\Theta}_s|^2+|\delta v_s|^2\right]\d s.
	\end{split}
\end{equation*}
This is (\ref{wz7}).

Then we turn to (\ref{wz10}). By applying It\^{o} formula to $({\Phi'_t})^{-1}\delta Z_t$, we have
\begin{equation*}
	\begin{aligned}
		\delta Z_t=&{\Phi'_t}({\Phi'_t})^{-1}\delta Z_t\\
		=&\int_0^t{\Phi'_t}({\Phi'_s})^{-1}\Big(\big(B_s-(C_s+D_s\Theta'_s)D_s\big)\delta\Theta_sZ_s\\
&\ \ \ \ \ \ \ \ \ \ \ \ \ \ \ \ \ \ \ \ -(C_s+D_s\Theta'_s)\left((\widehat{C}_s+\widehat{D}_s\widehat{\Theta}'_s)\delta Y_s+\widehat{D}_s\delta\widehat{\Theta}_sY_s+\widehat{D}_s\delta v_s\right)\Big)\d s\\		&+\int_0^t{\Phi'_t}({\Phi'_s})^{-1}\Big(D_s\delta\Theta_sZ_s+(\widehat{C}_s+\widehat{D}_s\widehat{\Theta}'_s)\delta Y_s+\widehat{D}_s\delta\widehat{\Theta}_sY_s+\widehat{D}_s\delta v_s\Big)\d W_s.
	\end{aligned}
	\nonumber
\end{equation*}
Thus if $\delta\Theta_\cdot\equiv0$, we have
{\small\begin{equation*}
	\begin{split}
&\mathbb{E}|\delta Z_t|^2\\			\le&K\mathbb{E}|\int_0^t{\Phi'_t}({\Phi'_s})^{-1}(C_s+D_s\Theta'_s)\Big((\widehat{C}_s+\widehat{D}_s\widehat{\Theta}'_s)\delta Y_s+\widehat{D}_s\delta\widehat{\Theta}_sY_s+\widehat{D}_s\delta v_s\Big)\d s|^2\\		&+K\mathbb{E}|\int_0^t{\Phi'_t}({\Phi'_s})^{-1}\Big((\widehat{C}_s+\widehat{D}_s\widehat{\Theta}'_s)\delta Y_s+\widehat{D}_s\delta\widehat{\Theta}_sY_s+\widehat{D}_s\delta v_s\Big)\d W_s|^2\\
			\le&K\mathbb{E}\int_0^t|{\Phi'_t}({\Phi'_s})^{-1}|^2e^{\frac{\lambda'}{2}(t-s)}\d s\int_0^te^{-\frac{\lambda'}{2}(t-s)}\Big(|\delta Y_s|^2+|\delta\widehat{\Theta}_sY_s|^2+|\delta v_s|^2\Big)\d s\\
			&+K\mathbb{E}\int_0^t|{\Phi'_t}({\Phi'_s})^{-1}|^2\Big(|\delta Y_s|^2+|\delta\widehat{\Theta}_sY_s|^2+|\delta v_s|^2\Big)\d s\\			
			\le&\left(\frac{2}{\lambda'}K+K\right)\int_0^te^{-\frac{\lambda'}{2}(t-s)}\Big(|\delta Y_s|^2+|\delta\widehat{\Theta}_sY_s|^2+|\delta v_s|^2\Big)\d s\\
			\le&K\int_0^te^{-\frac{\lambda'}{2}(t-s)}\Big(e^{-\lambda's}|\delta x|^2+\int_0^s\big((1+|x|^2)|\delta\widehat{\Theta}_r|^2+|\delta v_r|^2\big)\d r\Big)\d s\\
			&+K\int_0^te^{-\frac{\lambda'}{2}(t-s)}\big((1+|x|^2)|\delta\widehat{\Theta}_s|^2+|\delta v_s|^2\big)\d s\\
			\le&\left(\frac{2}{\lambda'}\right)^2Ke^{-\frac{\lambda'}{2}t}|\delta x|^2+\left(\left(\frac{2}{\lambda'}\right)^2K+\frac{2}{\lambda'}K\right)\int_0^t\Big((1+|x|^2)|\delta\widehat{\Theta}_s|^2+|\delta v_s|^2\Big)\d s,
	\end{split}
\end{equation*}}
which puts an end of Proposition \ref{closed-loop estimate}.
\end{proof}

\section{Long time asymptotic behavior of stochastic flow}
To begin with, we recall some definitions and results on Wasserstein distance we use in this section.

Let $\mathscr{L}(\mathbb{R}^d)$ be the Lebesgue $\sigma$-field of $\mathbb{R}^d$, and define a set of probability
\begin{equation*}
	\begin{aligned}
		\mathcal{P}_2(\mathbb{R}^d)=\left\{\nu:\mathscr{L}(\mathbb{R}^d)\rightarrow[0,1]\Big|
\int_{\mathbb{R}^d}|x|^2\nu(\d x)<\infty\right\}.
	\end{aligned}
\end{equation*}
Then $\mathcal{P}_2(\mathbb{R}^d)$ is a complete metric space endowed by 2-Wasserstein distance $w_2$, i.e. for any $\mu,\mu'\in\mathcal{P}_2(\mathbb{R}^d)$,
\begin{equation*}
	\begin{aligned}
		w_2(\mu,\mu')=\inf\Bigg\{\Big(\int_{\mathbb{R}^{2d}}|x-y|^2\nu(\d x,\d y)\Big)^\frac{1}{2}\Bigg|&\nu\in\mathcal{P}_2(\mathbb{R}^{2d}),\ \nu(\d x,\mathbb{R}^d)=\mu(\d x)\\
		&{\rm and}\ \nu(\mathbb{R}^d,\d y)=\mu'(\d y)\Bigg\}.
	\end{aligned}
\end{equation*}

The following two lemmas give some well-known results for 2-Wasserstein distance. The readers can refer to Section 6 in \cite{ref5} for details.
\begin{lemma} For $\xi,\xi'\in L_{\mathscr{F}}^2(\Omega;\mathbb{R}^d)$, $\mathbb{P}_{\xi}$, $\mathbb{P}_{\xi'}$ are their corresponding distributions, respectively. Then
	\begin{equation*}
		\begin{aligned}
			w_2(\mathbb{P}_{\xi},\mathbb{P}_{\xi'})^2\le \mathbb{E}|\xi-\xi'|^2.
		\end{aligned}
	\end{equation*}
\end{lemma}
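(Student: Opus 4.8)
The plan is to exhibit a single admissible coupling of $\mathbb{P}_{\xi}$ and $\mathbb{P}_{\xi'}$ whose transport cost equals exactly $\mathbb{E}|\xi-\xi'|^2$; since $w_2^2$ is defined as an infimum over all such couplings, this at once yields the asserted upper bound. The natural candidate is the joint law of the pair $(\xi,\xi')$, so no optimal transport plan actually needs to be constructed.

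First I would set $\nu=\mathbb{P}_{(\xi,\xi')}$, the push-forward of $\mathbb{P}$ under the measurable map $\omega\mapsto(\xi(\omega),\xi'(\omega))\in\mathbb{R}^{2d}$. One checks that $\nu\in\mathcal{P}_2(\mathbb{R}^{2d})$: because $\xi,\xi'\in L_{\mathscr{F}}^2(\Omega;\mathbb{R}^d)$, the change-of-variables (transfer) formula gives
\begin{equation*}
	\int_{\mathbb{R}^{2d}}\big(|x|^2+|y|^2\big)\,\nu(\d x,\d y)=\mathbb{E}\big(|\xi|^2+|\xi'|^2\big)<\infty.
\end{equation*}
Next I would verify that $\nu$ has the correct marginals, namely $\nu(\d x,\mathbb{R}^d)=\mathbb{P}_{\xi}(\d x)$ and $\nu(\mathbb{R}^d,\d y)=\mathbb{P}_{\xi'}(\d y)$. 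This is immediate from the definition of the image measure: for any Borel set $B\subseteq\mathbb{R}^d$ one has $\nu(B\times\mathbb{R}^d)=\mathbb{P}(\xi\in B,\ \xi'\in\mathbb{R}^d)=\mathbb{P}(\xi\in B)=\mathbb{P}_{\xi}(B)$, and symmetrically for the second marginal. Hence $\nu$ lies in the admissible set over which the infimum defining $w_2(\mathbb{P}_{\xi},\mathbb{P}_{\xi'})$ is taken.

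Finally, applying the transfer formula once more to the function $(x,y)\mapsto|x-y|^2$ gives $\int_{\mathbb{R}^{2d}}|x-y|^2\,\nu(\d x,\d y)=\mathbb{E}|\xi-\xi'|^2$, so that
\begin{equation*}
	w_2(\mathbb{P}_{\xi},\mathbb{P}_{\xi'})^2\le\int_{\mathbb{R}^{2d}}|x-y|^2\,\nu(\d x,\d y)=\mathbb{E}|\xi-\xi'|^2,
\end{equation*}
the inequality being nothing but the fact that $\nu$ is one competitor in the infimum. I do not expect any genuine obstacle here; the only two points that deserve an explicit line of justification are the identification of the marginals of the joint law and the use of the transfer formula, both of which are entirely standard.
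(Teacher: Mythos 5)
Your proposal is correct: the joint law of $(\xi,\xi')$ is an admissible coupling with the stated marginals and transport cost $\mathbb{E}|\xi-\xi'|^2$, so the inequality follows from the definition of $w_2$ as an infimum. The paper itself offers no proof of this lemma (it is quoted as a known fact with a reference to Villani), and your argument is exactly the standard coupling argument that establishes it, so there is nothing to reconcile.
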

\begin{lemma}\label{w_2 convergence equivalent condition} 
If $\mu,\mu_{k}\in \mathcal{P}_2(\mathbb{R}^d),k\in\mathbb{N}^+$, the following statements are equivalent:\\
(i) $\lim\limits_{k\rightarrow\infty}w_2(\mu_{k},\mu)=0$;\\
(ii) $\mu_{k}$ weakly converges to $\mu$ and $\lim\limits_{k\rightarrow\infty}\int_{\mathbb{R}^d}|x|^2\mu_{k}(\d x)=\int_{\mathbb{R}^d}|x|^2\mu(\d x)$;\\
(iii) for any quadratic growth $\varphi\in C(\mathbb{R}^d;\mathbb{R})$, i.e. there exists $K\ge0$ such that for any $x\in\mathbb{R}^d$, $|\varphi(x)|\le K(1+|x|^2)$, it holds that $\lim\limits_{k\rightarrow\infty}\int_{\mathbb{R}^d}\varphi(x)\mu_{k}(\d x)=\int_{\mathbb{R}^d}\varphi(x)\mu(\d x)$.
\end{lemma}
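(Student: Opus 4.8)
The plan is to prove the cycle of implications (i) $\Rightarrow$ (ii), (ii) $\Rightarrow$ (iii), (iii) $\Rightarrow$ (ii), and (ii) $\Rightarrow$ (i); two of these are immediate and two carry the real content. The overall strategy is that condition (ii)---weak convergence together with convergence of the second moments---is exactly the amount of extra information needed to control the quadratic tails, and it is this tail control that both produces convergence against quadratic-growth test functions and allows one to pass from weak convergence to $w_2$-convergence.

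For (i) $\Rightarrow$ (ii) I would extract the second-moment convergence from the triangle inequality in the metric space $(\mathcal{P}_2(\mathbb{R}^d), w_2)$. Since the only coupling of a measure $\nu$ with the Dirac mass $\delta_0$ is supported on $\{(x,0)\}$, one has $w_2(\nu,\delta_0)^2 = \int_{\mathbb{R}^d} |x|^2 \nu(\d x)$; hence $|w_2(\mu_k,\delta_0) - w_2(\mu,\delta_0)| \le w_2(\mu_k,\mu) \to 0$ gives $\int |x|^2 \mu_k(\d x) \to \int |x|^2 \mu(\d x)$. Weak convergence follows since $w_1 \le w_2 \to 0$ and, by Kantorovich--Rubinstein duality, $w_1$-convergence entails convergence of the integrals of every bounded Lipschitz function, which determines weak convergence. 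The implication (iii) $\Rightarrow$ (ii) is trivial: bounded continuous functions and the map $x \mapsto |x|^2$ are both of quadratic growth, so (iii) applied to these returns exactly the two assertions in (ii).

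The substantive half of the argument is (ii) $\Rightarrow$ (iii), where the point is to upgrade (ii) into uniform integrability of $\{|x|^2\}$ against $\{\mu_k\}$. I would fix a continuous cutoff $\chi_R$ with $\mathbf{1}_{\{|x|\le R\}} \le \chi_R \le \mathbf{1}_{\{|x|\le R+1\}}$. Then $|x|^2\chi_R$ is bounded continuous, so weak convergence gives $\int |x|^2\chi_R \d\mu_k \to \int |x|^2\chi_R \d\mu$; subtracting this from the second-moment convergence bounds $\limsup_k \int_{|x|>R} |x|^2 \d\mu_k$ by $\int_{|x|>R} |x|^2 \d\mu$, which vanishes as $R \to \infty$ because $\mu \in \mathcal{P}_2(\mathbb{R}^d)$. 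For a general quadratic-growth $\varphi$ I would split $\varphi = \varphi\chi_R + \varphi(1-\chi_R)$: the first piece is bounded continuous and converges by weak convergence, while the tail piece is dominated by $K\int_{|x|>R}(1+|x|^2)\d\mu_k$ and its analogue for $\mu$, both uniformly small in $k$ by the uniform integrability and tightness just established. Letting $k \to \infty$ and then $R \to \infty$ closes this step.

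Finally, (ii) $\Rightarrow$ (i) is where the structure of $w_2$ is genuinely used, and I expect it to be the main obstacle, because weak convergence by itself controls no coupling. Weak convergence lets me invoke the Skorokhod representation theorem to realize $\mu_k$ and $\mu$ as laws of random variables $X_k, X$ on a single probability space with $X_k \to X$ almost surely. I would then upgrade this to $L^2$-convergence: since $|X_k - X|^2 \le 2|X_k|^2 + 2|X|^2$, the nonnegative functions $2|X_k|^2 + 2|X|^2 - |X_k - X|^2$ converge almost surely to $4|X|^2$, so Fatou's lemma combined with $\mathbb{E}|X_k|^2 \to \mathbb{E}|X|^2$ forces $\limsup_k \mathbb{E}|X_k - X|^2 \le 0$. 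The first lemma of this section, $w_2(\mu_k,\mu)^2 \le \mathbb{E}|X_k - X|^2$, then delivers (i). The delicate point is precisely this passage from almost sure convergence plus convergence of $L^2$-norms to convergence in $L^2$---a Vitali/Scheff\'e phenomenon---which is exactly what the second-moment hypothesis in (ii) is designed to supply.
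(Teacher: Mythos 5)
Your proof is correct, but note that the paper itself gives no argument for this lemma: it is stated as a known fact with a pointer to Section 6 of Villani's \emph{Optimal Transport: Old and New} (Theorem 6.9 there is exactly this equivalence, for general $W_p$ on Polish spaces). What you have supplied is therefore a self-contained proof of the cited result, and it is the standard elementary one for $\mathbb{R}^d$: the Dirac-mass identity $w_2(\nu,\delta_0)^2=\int|x|^2\nu(\d x)$ plus the triangle inequality and $w_1\le w_2$ with Kantorovich--Rubinstein for (i)$\Rightarrow$(ii); truncation and the uniform-integrability bound extracted from second-moment convergence for (ii)$\Rightarrow$(iii); and Skorokhod representation combined with the Fatou trick applied to the nonnegative functions $2|X_k|^2+2|X|^2-|X_k-X|^2$ for (ii)$\Rightarrow$(i). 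The cycle of implications you prove does give the full equivalence. Two small points worth tightening: in the truncation step your estimate really bounds $\limsup_k\int_{\{|x|>R+1\}}|x|^2\,\mu_k(\d x)$ by $\int_{\{|x|>R\}}|x|^2\,\mu(\d x)$ (harmless, since both tails vanish as $R\to\infty$); and the final inequality $w_2(\mu_k,\mu)^2\le\mathbb{E}|X_k-X|^2$ should be justified directly from the definition of $w_2$ (the joint law of $(X_k,X)$ is an admissible coupling) rather than by citing the paper's preceding lemma, which is phrased for random variables on the paper's fixed probability space, whereas your $X_k,X$ live on the auxiliary space produced by Skorokhod's theorem. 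Compared with the paper's citation, your route buys self-containedness and elementarity; the citation buys brevity and the greater generality of Villani's setting, none of which is actually needed for the paper's application.
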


For any $0\le t\le s$, $x\in\mathbb{R}^n$ and $B\in\mathscr{B}_{\mathbb{R}^n}$, we define the transition probability
\begin{equation*}
	\begin{aligned}
		p^{\Theta,\overline{\Theta},v}(t,s,x;B)=\mathbb{P}(X^{t,x,\Theta,\overline{\Theta},v}_s\in B).
	\end{aligned}
\end{equation*}

Due to the continuity of initial state for stochastic flow, it is not surprised to have the following result.
\begin{proposition}\label{Feller property} For a given $\tau>0$, assume $A,\overline{A},B,\overline{B},C,\overline{C},D,\overline{D}, b,\sigma\in\mathscr{B}_\tau$, and $X^{r,x,\Theta,\overline{\Theta},v}$, $r\in[0,\tau)$ and $(\Theta,\overline{\Theta},v)\in\mathbb{U}$, is the solution of closed-loop system \eqref{wz2} with the initial time $r$ and initial state $x$. Then for any $h\in C_b(\mathbb{R}^n)$, $t\ge r$,
\begin{equation*}
	\begin{aligned}
		x\mapsto\int_{\mathbb{R}^n}h(y)p^{\Theta,\overline{\Theta},v}(r,t,x;\d y)=\mathbb{E}[h(X^{r,x,\Theta,\overline{\Theta},v}_t)]
	\end{aligned}
\end{equation*}
is continuous.
\end{proposition}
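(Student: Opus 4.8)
The plan is to deduce the Feller property from the $L^2$-Lipschitz dependence of the closed-loop state on its initial datum, which is already contained in Proposition \ref{closed-loop estimate}, and then to pass to the limit inside $\mathbb{E}[h(\cdot)]$ for bounded continuous $h$ by a convergence-in-probability argument. Throughout I fix $(\Theta,\overline{\Theta},v)\in\mathbb{U}$, $r\in[0,\tau)$ and $t\ge r$; note that membership in $\mathbb{U}$ forces $\mathscr{S}_\tau[A,C;B,D]$ and $\mathscr{S}_\tau[\widehat{A},0;\widehat{B},0]$ to be nonempty, so that the stabilizability hypothesis needed to invoke Proposition \ref{closed-loop estimate} is in force.

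First I would compare the two solutions $X^{r,x,\Theta,\overline{\Theta},v}$ and $X^{r,x',\Theta,\overline{\Theta},v}$ that share the \emph{same} triple $(\Theta,\overline{\Theta},v)$ but start from different states $x,x'$. Applying Proposition \ref{closed-loop estimate} with $\Theta'=\Theta$, $\overline{\Theta}'=\overline{\Theta}$ and $v'=v$, every integral term in (\ref{wz7}) and (\ref{wz10}) is weighted by one of $\delta\Theta$, $\delta\overline{\Theta}$, $\delta v$ and hence vanishes; moreover the extra requirement $\Theta_\cdot=\Theta'_\cdot$ of (\ref{wz10}) is met. This leaves
\[
\big|\mathbb{E}[X^{r,x,\Theta,\overline{\Theta},v}_t-X^{r,x',\Theta,\overline{\Theta},v}_t]\big|^2\le \hat{M}e^{-(\lambda\vee\lambda')(t-r)}|x-x'|^2
\]
together with the bound on the centred part,
\[
\mathbb{E}\big|X^{r,x,\Theta,\overline{\Theta},v}_t-X^{r,x',\Theta,\overline{\Theta},v}_t-\mathbb{E}[X^{r,x,\Theta,\overline{\Theta},v}_t-X^{r,x',\Theta,\overline{\Theta},v}_t]\big|^2\le Me^{-\frac{\lambda\vee\lambda'}{2}(t-r)}|x-x'|^2 .
\]
Writing $\xi=X^{r,x,\Theta,\overline{\Theta},v}_t-X^{r,x',\Theta,\overline{\Theta},v}_t$ and using the variance decomposition $\mathbb{E}|\xi|^2=|\mathbb{E}\xi|^2+\mathbb{E}|\xi-\mathbb{E}\xi|^2$, the two estimates add up to
\[
\mathbb{E}\big|X^{r,x,\Theta,\overline{\Theta},v}_t-X^{r,x',\Theta,\overline{\Theta},v}_t\big|^2\le (\hat{M}+M)|x-x'|^2 ,
\]
so that $x\mapsto X^{r,x,\Theta,\overline{\Theta},v}_t$ is Lipschitz continuous from $\mathbb{R}^n$ into $L^2_{\mathscr{F}}(\Omega;\mathbb{R}^n)$.

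Next I would take any sequence $x_k\to x$. The displayed $L^2$-estimate gives $X^{r,x_k,\Theta,\overline{\Theta},v}_t\to X^{r,x,\Theta,\overline{\Theta},v}_t$ in $L^2(\Omega)$, hence in probability. Since $h\in C_b(\mathbb{R}^n)$ is continuous, the continuous mapping theorem yields $h(X^{r,x_k,\Theta,\overline{\Theta},v}_t)\to h(X^{r,x,\Theta,\overline{\Theta},v}_t)$ in probability, and these variables are uniformly bounded by $\|h\|_\infty$; convergence in probability together with uniform boundedness gives $\mathbb{E}[h(X^{r,x_k,\Theta,\overline{\Theta},v}_t)]\to\mathbb{E}[h(X^{r,x,\Theta,\overline{\Theta},v}_t)]$, which is the asserted continuity. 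Equivalently one may argue along subsequences: every subsequence of $(x_k)$ has a further subsequence along which the states converge almost surely, and bounded convergence then forces convergence of the integrals; independence of the limit from the subsequence yields convergence of the full sequence.

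The argument is essentially routine once the $L^2$-continuity of the flow is available, so the only delicate point is the final passage to the limit. Because $h$ is merely bounded and continuous—not uniformly continuous or Lipschitz—one cannot bound $|\mathbb{E}[h(X^{r,x_k}_t)]-\mathbb{E}[h(X^{r,x}_t)]|$ directly by the $L^2$-distance of the states; this is precisely why I route the conclusion through convergence in probability and uniform boundedness, which is what makes test functions from $C_b(\mathbb{R}^n)$ (rather than from a Lipschitz class) sufficient.
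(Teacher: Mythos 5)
Your proof is correct, and it is essentially the argument the paper has in mind: the paper states this proposition without proof, attributing it to ``the continuity of initial state for stochastic flow,'' and your key estimate $\mathbb{E}\big|X^{r,x,\Theta,\overline{\Theta},v}_t-X^{r,x',\Theta,\overline{\Theta},v}_t\big|^2\le K|x-x'|^2$ (obtained from \eqref{wz7} and \eqref{wz10} with identical $(\Theta,\overline{\Theta},v)$, so that all integral terms vanish) is exactly the same $L^2$-Lipschitz bound the paper itself invokes inside the proof of Proposition \ref{w_2 convergence theorem deterministic initial state}. Your observation that $(\Theta,\overline{\Theta},v)\in\mathbb{U}$ already guarantees the stabilizability needed to apply Proposition \ref{closed-loop estimate}, and the passage from $L^2$-convergence to convergence of $\mathbb{E}[h(\cdot)]$ via convergence in probability and boundedness of $h$, are both sound, so your proposal is a valid completion of the proof the paper omits.
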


Also we show that the transition probability is periodic.
\begin{proposition}\label{periodic time shift property} For a given $\tau>0$, assume $A,\overline{A},B,\overline{B},C,\overline{C},D,\overline{D}, b,\sigma\in\mathscr{B}_\tau$. Then for any $r\in[0,\tau)$, $t\ge 0$, $k\in\mathbb{N}$ and $(\Theta,\overline{\Theta},v)\in\mathbb{U}$,
\begin{equation*}
	\begin{aligned}
		p^{\Theta,\overline{\Theta},v}(r,t+r,x;\cdot)=p^{\Theta,\overline{\Theta},v}(k\tau+r,t+k\tau+r,x;\cdot).
	\end{aligned}
\end{equation*}
\end{proposition}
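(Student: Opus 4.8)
The plan is to exploit the $\tau$-periodicity of all the coefficients together with the stationarity of the Brownian increments, thereby reducing the asserted identity of the two transition probabilities to a uniqueness statement for the mean-field SDE \eqref{wz2}. First I would rewrite both sides as laws: $p^{\Theta,\overline{\Theta},v}(r,t+r,x;\cdot)$ is the distribution of $X^{r,x,\Theta,\overline{\Theta},v}_{t+r}$, while $p^{\Theta,\overline{\Theta},v}(k\tau+r,t+k\tau+r,x;\cdot)$ is the distribution of $X^{k\tau+r,x,\Theta,\overline{\Theta},v}_{t+k\tau+r}$. I would then introduce the time-shifted process $\tilde W_s:=W_{s+k\tau}-W_{k\tau}$, $s\ge 0$, which by stationarity and independence of increments is again a standard Brownian motion having the same law as $W$, and set $V_s:=X^{k\tau+r,x,\Theta,\overline{\Theta},v}_{s+k\tau}$ for $s\ge r$, so that $V_r=x$ and $V_{t+r}=X^{k\tau+r,x,\Theta,\overline{\Theta},v}_{t+k\tau+r}$.

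The core step is to show that $V$, driven by $\tilde W$ and started at time $r$ from $x$, satisfies exactly the same equation \eqref{wz2} as $X^{r,x,\Theta,\overline{\Theta},v}$. Changing variables $u=s+k\tau$ in the integral form of \eqref{wz2} and using $\d W_{s+k\tau}=\d\tilde W_s$, every coefficient is evaluated at $s+k\tau$; since $A,\overline{A},\dots,\sigma\in\mathscr{B}_\tau$ and, because $(\Theta,\overline{\Theta},v)\in\mathbb{U}$, also $\Theta,\overline{\Theta},v$ are $\tau$-periodic, each coefficient evaluated at $s+k\tau$ equals its value at $s$. The one delicate point is the mean-field term $\mathbb{E}X_{s+k\tau}$, which I would handle first at the deterministic level: the mean $Y_u=\mathbb{E}X^{k\tau+r,x,\Theta,\overline{\Theta},v}_u$ solves the linear ODE with $\tau$-periodic coefficients $\dot Y_u=(\widehat{A}_u+\widehat{B}_u\widehat{\Theta}_u)Y_u+\widehat{B}_uv_u+b_u$ derived as in the proof of Proposition \ref{closed-loop estimate}. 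The shift $s\mapsto Y_{s+k\tau}$ satisfies the identical ODE on $[r,\infty)$ with the same initial datum $x$ at time $r$, so by uniqueness of ODE solutions $\mathbb{E}V_s=Y_{s+k\tau}=\mathbb{E}X^{r,x,\Theta,\overline{\Theta},v}_s$. Hence the mean-field term matches as well, and $V$ indeed solves the same SDE driven by $\tilde W$.

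It then remains to conclude by uniqueness in law. Once the mean is pinned down as the shift-invariant deterministic function above, the equation for the state is a genuine (non-mean-field) linear SDE whose coefficients are deterministic periodic functions; by the strong well-posedness already used in Proposition \ref{closed-loop estimate} the solution is a fixed measurable functional of the driving noise, the very same functional producing $X^{r,x,\Theta,\overline{\Theta},v}$ from $W$ and $V$ from $\tilde W$. Since $\tilde W$ and $W$ have the same law, $V_{t+r}$ and $X^{r,x,\Theta,\overline{\Theta},v}_{t+r}$ have the same distribution. Combining these identities gives $p^{\Theta,\overline{\Theta},v}(k\tau+r,t+k\tau+r,x;\cdot)=\mathbb{P}_{V_{t+r}}=\mathbb{P}_{X^{r,x,\Theta,\overline{\Theta},v}_{t+r}}=p^{\Theta,\overline{\Theta},v}(r,t+r,x;\cdot)$, which is the claim. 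I expect the principal obstacle to be the bookkeeping for the mean-field coupling under the shift: one must verify that the deterministic mean transports correctly \emph{before} the Brownian-shift argument can be applied to the full state, since otherwise the equations satisfied by $V$ and by $X^{r,x,\Theta,\overline{\Theta},v}$ would not literally coincide.
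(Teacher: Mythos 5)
Your proof is correct and takes essentially the same route as the paper: both shift the Brownian motion by $k\tau$ (the paper compares $X^{r,x,\Theta,\overline{\Theta},v}_{\cdot+r}$ driven by $W^r_\cdot=W_{\cdot+r}-W_r$ with $X^{k\tau+r,x,\Theta,\overline{\Theta},v}_{\cdot+k\tau+r}$ driven by $W^{k\tau+r}_\cdot$), use $\tau$-periodicity of all coefficients to see that the two processes solve the identical SDE, and conclude by uniqueness in law. The only difference is that the paper compresses the last step into ``by the uniqueness of weak solution to SDE,'' whereas you justify that uniqueness for the mean-field equation explicitly --- first pinning down $\mathbb{E}X$ by uniqueness for the deterministic ODE it satisfies, then reducing to a classical linear SDE and using the functional dependence on the driving noise --- which is a careful filling-in of the same argument rather than a different one.
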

\begin{proof}
For any $s,t\ge0$, set
\begin{equation*}
	\begin{aligned}
		W_t^s=W_{t+s}-W_s.
	\end{aligned}
\end{equation*}
For any $s\ge0$, $W^s_\cdot$ is a Brownian motion with the same distribution to $W_\cdot$.
Notice that for any $r\in[0,\tau)$, $t\ge 0$, $k\in\mathbb{N}$, $X^{r,x,\Theta,\overline{\Theta},v}_{\cdot+r}$ satisfies SDE
\begin{equation*}
	\left\{\begin{aligned}
		\d X_t=&\Big(A_{t+r}X_t+\overline{A}_{t+r}\mathbb{E}X_t+B_{t+r}(\Theta_{t+r}X_t+\overline{\Theta}_{t+r} \mathbb{E}X_t+v_{t+r})\\
		&+\overline{B}_{t+r}\mathbb{E}\big[\Theta_{t+r}X_t+\overline{\Theta}_{t+r}\mathbb{E}X_t+v_{t+r}\big]+b_{t+r}\Big)\d t\\
		&+\Big(C_{t+r}X_t+\overline{C}_{t+r}\mathbb{E}X_t+D_{t+r}(\Theta_{t+r}X_t+\overline{\Theta}_{t+r}\mathbb{E}X_t+v_{t+r})\\
		&+\overline{D}_{t+r}\mathbb{E}\big[\Theta_{t+r}X_t+\overline{\Theta}_{t+r}\mathbb{E}X_t+v_{t+r}\big]+\sigma_{t+r}\Big)\d W^{r}_t,\ \ \ t\ge 0,\\
		X_0=&x.
	\end{aligned}\right.
	\nonumber
\end{equation*}
On the other hand, by the periodicity of coefficients, $X^{k\tau+r,x,\Theta,\overline{\Theta},v}_{\cdot+k\tau+r}$ satisfies SDE
\begin{equation*}
	\left\{\begin{aligned}
		\d X_t=&\Big(A_{t+r}X_t+\overline{A}_{t+r}\mathbb{E}X_t+B_{t+r}(\Theta_{t+r}X_t+\overline{\Theta}_{t+r} \mathbb{E}X_t+v_{t+r})\\
		&+\overline{B}_{t+r}\mathbb{E}\big[\Theta_{t+r}X_t+\overline{\Theta}_{t+r}\mathbb{E}X_t+v_{t+r}\big]+b_{t+r}\Big)\d t\\
		&+\Big(C_{t+r}X_t+\overline{C}_{t+r}\mathbb{E}X_t+D_{t+r}(\Theta_{t+r}X_t+\overline{\Theta}_{t+r}\mathbb{E}X_t+v_{t+r})\\
		&+\overline{D}_{t+r}\mathbb{E}\big[\Theta_{t+r}X_t+\overline{\Theta}_{t+r}\mathbb{E}X_t+v_{t+r}\big]+\sigma_{t+r}\Big)\d W^{k\tau+r}_t,\ \ \ t\ge 0,\\
		X_0=&x.
	\end{aligned}\right.
\end{equation*}
By the uniqueness of weak solution to SDE, the periodicity of transition probability follows.
\end{proof}

Next we show that the long time behavior of the state equation can be depicted by periodic property.
\begin{proposition}\label{w_2 convergence theorem deterministic initial state} For a given $\tau>0$, assume $A,\overline{A},B,\overline{B},C,\overline{C},D,\overline{D}, b,\sigma\in\mathscr{B}_\tau$ and {\rm$\textbf{(A3)}$}, and $X^{r,x,\Theta,\overline{\Theta},v}$, $r\in[0,\tau)$ and $(\Theta,\overline{\Theta},v)\in\mathbb{U}$, is the solution of closed-loop system \eqref{wz2} with the initial time $r$ and initial state $x$. Then there exists a unique $\mu_r^{\Theta,\overline{\Theta},v}\in \mathcal{P}_2(\mathbb{R}^n)$, such that for any $x\in\mathbb{R}^n$,
\begin{equation}\label{wz13}
	\begin{aligned}		\lim\limits_{k\rightarrow\infty}w_2(p^{\Theta,\overline{\Theta},v}(r,r+k\tau,x;\cdot),\mu_r^{\Theta,\overline{\Theta},v}(\cdot))=0.
	\end{aligned}
\end{equation}
\end{proposition}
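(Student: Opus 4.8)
The plan is to show that, for each fixed $x$, the sequence $\{p^{\Theta,\overline{\Theta},v}(r,r+k\tau,x;\cdot)\}_{k\in\mathbb{N}}$ is Cauchy in the complete metric space $(\mathcal{P}_2(\mathbb{R}^n),w_2)$, so that it converges to some $\mu_r^{\Theta,\overline{\Theta},v}$; and then to show this limit does not depend on $x$, which also yields uniqueness. Throughout I couple two copies of the closed-loop flow driven by the \emph{same} Brownian motion and invoke the estimate $w_2(\mathbb{P}_\xi,\mathbb{P}_{\xi'})^2\le\mathbb{E}|\xi-\xi'|^2$ recalled at the start of this section. Write $Y^x_\cdot=\mathbb{E}X^{r,x,\Theta,\overline{\Theta},v}_\cdot$ for the (deterministic) mean path.

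For the Cauchy property, fix $k\in\mathbb{N}$ and $m\ge1$, set $t_1:=m\tau+r$ and $t_2:=(k+m)\tau+r$, and compare the two laws at the common final time $t_2$. Let $\hat{X}:=X^{r,x,\Theta,\overline{\Theta},v}$ and let $\tilde{X}:=X^{m\tau+r,x,\Theta,\overline{\Theta},v}$ be the solution restarted at $t_1$ from the deterministic state $x$, both built on the same $W$. By Proposition \ref{periodic time shift property}, $\tilde{X}_{t_2}\sim p^{\Theta,\overline{\Theta},v}(r,r+k\tau,x;\cdot)$, while by definition $\hat{X}_{t_2}\sim p^{\Theta,\overline{\Theta},v}(r,r+(k+m)\tau,x;\cdot)$, so $(\tilde{X}_{t_2},\hat{X}_{t_2})$ is a coupling and
\begin{equation*}
w_2\big(p^{\Theta,\overline{\Theta},v}(r,r+k\tau,x;\cdot),\,p^{\Theta,\overline{\Theta},v}(r,r+(k+m)\tau,x;\cdot)\big)^2\le\mathbb{E}\big|\tilde{X}_{t_2}-\hat{X}_{t_2}\big|^2.
\end{equation*}
Both processes solve the same closed-loop equation on $[t_1,\infty)$ with the same $(\Theta,\overline{\Theta},v)$ but with the different initial data $x$ and $\hat{X}_{t_1}=X^{r,x}_{t_1}$. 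I would estimate $\mathbb{E}|\tilde{X}_{t_2}-\hat{X}_{t_2}|^2$ exactly as in the proof of Proposition \ref{closed-loop estimate}, splitting the difference into its mean $\delta Y$ and deviation $\delta Z$: here $\delta Y$ solves the homogeneous mean ODE from $x-Y^x_{t_1}$, so by the decay of $\Psi$ it is $O(e^{-\frac{\lambda}{2}k\tau}|x-Y^x_{t_1}|)$; and $\delta Z$ solves the deviation SDE from the mean-zero datum $-(X^{r,x}_{t_1}-\mathbb{E}X^{r,x}_{t_1})$ with forcing proportional to $\delta Y$. Applying It\^o's formula to $\Phi^{-1}\delta Z$ and Lemma \ref{wz5}, and conditioning on $\mathscr{F}_{t_1}$ to handle the random initial datum, controls $\mathbb{E}|\delta Z_{t_2}|^2$ by $O(e^{-\frac{\lambda}{2}k\tau})$. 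Bounding $|Y^x_{t_1}|$ and $\mathbb{E}|X^{r,x}_{t_1}-\mathbb{E}X^{r,x}_{t_1}|^2$ uniformly in $m$ via (\ref{wz6}) gives $\mathbb{E}|\tilde{X}_{t_2}-\hat{X}_{t_2}|^2\le Ke^{-\frac{\lambda}{2}k\tau}(1+|x|^2)$ with $K$ independent of $m$. Hence the tail of the sequence is uniformly small, the sequence is Cauchy, and it converges to some $\mu_r^{\Theta,\overline{\Theta},v}$ depending a priori on $x$.

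To remove the dependence on $x$, I would compare two initial states $x,x'$ under the \emph{same} control and the same $W$: then $X^{r,x}_{r+k\tau}$ and $X^{r,x'}_{r+k\tau}$ couple $p(r,r+k\tau,x;\cdot)$ and $p(r,r+k\tau,x';\cdot)$, and since $\Theta=\Theta'$, $\overline{\Theta}=\overline{\Theta}'$, $v=v'$ the integral terms in (\ref{wz7}) and (\ref{wz10}) vanish, leaving $\mathbb{E}|X^{r,x}_{r+k\tau}-X^{r,x'}_{r+k\tau}|^2\le Ke^{-\frac{\lambda}{2}k\tau}|x-x'|^2\to0$. Passing to the limit in the triangle inequality for $w_2$ shows the two limits coincide, so $\mu_r^{\Theta,\overline{\Theta},v}$ is independent of $x$; uniqueness is then immediate, since any measure satisfying (\ref{wz13}) is at $w_2$-distance $0$ from the limit.

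The main obstacle I anticipate is the mean-field coupling rather than the decay estimates, which are routine once set up. Because the drift and diffusion depend on $\mathbb{E}X$, the family $p^{\Theta,\overline{\Theta},v}(r,\cdot,x;\cdot)$ is not a genuine Markov transition kernel, and the restarted process $\tilde{X}$ carries its own mean path (initialized at $x$ at time $t_1$) while the continuation $\hat{X}$ carries the original mean path $Y^x$. The care needed is precisely in tracking these two different mean-field forcings through the difference equation; the decomposition into $\delta Y$ and $\delta Z$ keeps the deterministic mean mismatch (handled by ODE stability) separate from the random deviation (handled by the $\Phi$-estimate of Lemma \ref{wz5}), and it is the exponential smallness of \emph{both}, uniformly in the shift $m$, that makes the Cauchy argument close.
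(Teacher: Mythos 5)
Your proposal is correct, and it reaches the conclusion by a genuinely different mechanism from the paper's proof. The paper estimates $w_2\big(p^{\Theta,\overline{\Theta},v}(r,t_1,x_1;\cdot),p^{\Theta,\overline{\Theta},v}(r,t_2,x_2;\cdot)\big)$, with $t_i=r+k_i\tau$, via Kantorovich duality combined with the factorization
\begin{equation*}
p^{\Theta,\overline{\Theta},v}(r,t_2,x_2;\d y)=\int_{\mathbb{R}^n}p^{\Theta,\overline{\Theta},v}(t_2-t_1+r,t_2,z;\d y)\,p^{\Theta,\overline{\Theta},v}(r,t_2-t_1+r,x_2;\d z),
\end{equation*}
which reduces the comparison of two horizons to the same-initial-time, deterministic-initial-state contraction of Proposition \ref{closed-loop estimate} integrated against an intermediate-time law; since that estimate allows $x_1\neq x_2$, independence of the limit from the starting point comes out simultaneously, and uniqueness is then derived through the invariance identity (\ref{wz11}) using the Feller property and dominated convergence. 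You instead build a synchronous coupling of $X^{r,x}$ and the restarted copy $X^{m\tau+r,x}$ on the same Brownian motion and bound $\mathbb{E}|\tilde X_{t_2}-\hat X_{t_2}|^2$ directly, so you need neither the duality theorem nor any factorization of the transition functions. That is a substantive difference, not a cosmetic one: as you yourself observe, $p^{\Theta,\overline{\Theta},v}$ is not a genuine Markov kernel for mean-field dynamics --- the continuation of $X^{r,x_2}$ past an intermediate time is driven by the original mean path, while $X^{s,z}$ is driven by a mean path re-initialized at $z$ --- so the displayed factorization is precisely the delicate step of the paper's argument, and your coupling sidesteps it entirely. The price you pay is the random initial datum $\hat X_{t_1}$ at the restart time, and your treatment of it is the right one: the mean/deviation split keeps the mean ODE deterministic, the independence of $\Phi_{t_2}\Phi_{t_1}^{-1}$ from $\mathscr{F}_{t_1}$ (Lemma \ref{wz5} plus conditioning) handles the $\mathscr{F}_{t_1}$-measurable datum $-Z^x_{t_1}$, and (\ref{wz6}) makes all bounds uniform in the shift $m$, which is what closes the Cauchy argument. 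Two smaller points of divergence: you need a separate (easy) two-point coupling, via (\ref{wz7}) and (\ref{wz10}) with identical controls, to identify the limits for different $x$, which the paper obtains in one stroke; and your uniqueness is the immediate triangle inequality, whereas the paper's longer uniqueness argument also establishes the invariance identity (\ref{wz11}), which it reuses in Proposition \ref{periodic measure stochastic flow property} --- so if your proof replaced the paper's, that identity would still have to be proved separately for the later results.
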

\begin{proof}
Denote
\begin{equation*}
	\begin{aligned}
		\Delta=\Big\{(f,g)\Big|f,g\in C_b(\mathbb{R}^n),\ f(y_1)-g(y_2)\le |y_1-y_2|^2\Big\}.
	\end{aligned}
	\nonumber
\end{equation*}
By Kantorovich duality (Villani \cite[Theorem 5.10]{ref5}), Propositions \ref{closed-loop estimate} and  \ref{periodic time shift property}, we have for any $r\in[0,\tau)$, $t_2=r+k_2\tau>t_1=r+k_1\tau\ge0$, $k_1,k_2\in\mathbb{N}$ and $x_1,x_2\in\mathbb{R}^n$,
\begin{equation*}
	\begin{aligned} &w_2(p^{\Theta,\overline{\Theta},v}(r,t_1,x_1;\cdot),p^{\Theta,\overline{\Theta},v}(r,t_2,x_2;\cdot))^2\\
		=&\sup\limits_{(f,g)\in\Delta} \left(\int_{\mathbb{R}^n}f(y_1)p^{\Theta,\overline{\Theta},v}(r,t_1,x_1;\d y_1)-\int_{\mathbb{R}^n}g(y_2)p^{\Theta,\overline{\Theta},v}(r,t_2,x_2;\d y_2)\right)\\
		=&\sup\limits_{(f,g)\in\Delta} \bigg(\int_{\mathbb{R}^n}f(y_1)p^{\Theta,\overline{\Theta},v}(r,t_1,x_1;\d y_1)\\
&\ \ \ \ \ \ \ \ \ \ \ -\int_{\mathbb{R}^n}g(y_2)\int_{\mathbb{R}^n}p^{\Theta,\overline{\Theta},v}(t_2-t_1+r,t_2,z;\d y_2)p^{\Theta,\overline{\Theta},v}(r,t_2-t_1+r,x_2;\d z)\bigg)\\ \le&\int_{\mathbb{R}^n}\sup\limits_{(f,g)\in\Delta}\left(\int_{\mathbb{R}^n}f(y_1)p^{\Theta,\overline{\Theta},v}(r,t_1,x_1;\d y_1)-\int_{\mathbb{R}^n}g(y_2)p^{\Theta,\overline{\Theta},v}(r,t_1,z;\d y_2)\right)\\
&\ \ \ \ \ \ \times p^{\Theta,\overline{\Theta},v}(r,t_2-t_1+r,x_2;\d z)\\
=&\int_{\mathbb{R}^n}w_2(p^{\Theta,\overline{\Theta},v}(r,t_1,x_1;\cdot),p^{\Theta,\overline{\Theta},v}(r,t_1,z;\cdot))^2p^{\Theta,\overline{\Theta},v}(r,t_2-t_1+r,x_2;\d z)\\		\le&\int_{\mathbb{R}^n}\mathbb{E}|X^{r,x_1,\Theta,\overline{\Theta},v}_{t_1}-X^{r,z,\Theta,\overline{\Theta},v}_{t_1}|^2p^{\Theta,\overline{\Theta},v}(r,t_2-t_1+r,x_2;\d z)\\
		\le&Ke^{-\frac{\lambda k_1\tau}{2}}\int_{\mathbb{R}^n}|x_1-z|^2p^{\Theta,\overline{\Theta},v}(r,t_2-t_1+r,x_2;\d z)\\
		\le&Ke^{-\frac{\lambda k_1\tau}{2}}\left(|x_1|^2+\mathbb{E}\Big[|X^{r,x_2,\Theta,\overline{\Theta},v}_{(k_2-k_1)\tau+r}|^2\Big]\right)\\
		\le&Ke^{-\frac{\lambda k_1\tau}{2}}\left(|x_1|^2+K(1+e^{-\frac{\lambda(k_2-k_1)\tau}{2}}|x_2|^2)\right).
	\end{aligned}
\end{equation*}
This yields that the transition probability sequence $\Big\{p^{\Theta,\overline{\Theta},v}(r,r+k\tau,x;\cdot)\Big\}_{k\in\mathbb{N}}$ is a Cauchy sequence
in $(\mathcal{P}_2(\mathbb{R}^n),w_2)$, so there exists a limit $\mu^{\Theta,\overline{\Theta},v}_r\in\mathcal{P}_2(\mathbb{R}^n)$ such that for any $x\in\mathbb{R}^n$, $\lim\limits_{k\to\infty}w_2(p^{\Theta,\overline{\Theta},v}(r,r+k\tau,x;\cdot),\mu^{\Theta,\overline{\Theta},v}_r(\cdot))=0$.

As for the uniqueness, we first prove that for any $r\in[0,\tau)$, $k\in\mathbb{N}$ and $B\in\mathscr{B}_{\mathbb{R}^n}$,
\begin{equation}\label{wz11}
	\begin{aligned}
\mu^{\Theta,\overline{\Theta},v}_r(B)=\int_{\mathbb{R}^n}p^{\Theta,\overline{\Theta},v}(r,r+k\tau,z;B)\mu^{\Theta,\overline{\Theta},v}_r(\d z).
	\end{aligned}
\end{equation}
To see this, we notice that by Proposition \ref{periodic time shift property}, for any $h\in C_b(\mathbb{R}^n)$, $r\in[0,\tau)$ and $k,k'\in\mathbb{N}$,
\begin{equation}\label{wz12}
	\begin{aligned}
		&\int_{\mathbb{R}^n}h(y)p^{\Theta,\overline{\Theta},v}(r,r+(k'+k)\tau,x;\d y)\\
=&\int_{\mathbb{R}^n}h(y)\int_{\mathbb{R}^n}p^{\Theta,\overline{\Theta},v}(r+k'\tau,r+(k'+k)\tau,z;\d y)p^{\Theta,\overline{\Theta},v}(r,r+k'\tau,x;\d z)\\
		=&\int_{\mathbb{R}^n}\int_{\mathbb{R}^n}h(y)p^{\Theta,\overline{\Theta},v}(r,r+k\tau,z;\d y)p^{\Theta,\overline{\Theta},v}(r,r+k'\tau,x;\d z).
	\end{aligned}
\end{equation}
Also by Lemma \ref{Feller property} $\int_{\mathbb{R}^n}h(y)p^{\Theta,\overline{\Theta},v}(r,r+k\tau,z;\d y)\in C_b(\mathbb{R}^n)$. Hence as $k'\rightarrow\infty$ in (\ref{wz12}) it leads to
\begin{equation*}
	\begin{aligned}
		\int_{\mathbb{R}^n}h(y)\mu_r(\d y)
		=\int_{\mathbb{R}^n}h(y)\int_{\mathbb{R}^n}p^{\Theta,\overline{\Theta},v}(r,r+k\tau,z;\d y)\mu_r(\d z),
	\end{aligned}
	\nonumber
\end{equation*}
which completes the proof of (\ref{wz11}). If there exists another $\mu_r^{\Theta,\overline{\Theta},v}\in \mathcal{P}_2(\mathbb{R}^n)$ independent of $x$ satisfying (\ref{wz13}). As $k\rightarrow\infty$,  by the dominated convergence theorem we have
\begin{equation*}
	\begin{aligned}
		\int_{\mathbb{R}^n}h(y)\mu_r'(\d y)&
=\int_{\mathbb{R}^n}\int_{\mathbb{R}^n}h(y)p^{\Theta,\overline{\Theta},v}(r,r+k\tau,z;\d y)\mu_r'(\d z)\\
		&\rightarrow\int_{\mathbb{R}^n}\int_{\mathbb{R}^n}h(y)\mu_r(\d y)\mu_r'(\d z)=\int_{\mathbb{R}^n}h(y)\mu_r(\d y),
	\end{aligned}
	\nonumber
\end{equation*}
which implies $\mu_r=\mu_r'$.
\end{proof}

Moreover, the convergence in Proposition \ref{w_2 convergence theorem deterministic initial state} still holds with a single initial state replaced by a distributed initial state.
\begin{proposition}\label{w_2 convergence theorem random initial state}
Assume the same conditions as in Proposition \ref{w_2 convergence theorem deterministic initial state}. Then there exists a unique $\mu_r^{\Theta,\overline{\Theta},v}\in \mathcal{P}_2(\mathbb{R}^n)$, such that for any $\xi_r\in \mathscr{F}$ satisfying that $\mathbb{E}|\xi_r|^2<\infty$ and $\xi_r$ is independent of $\sigma\{W_t-W_r,\ t\geq r\}$,
\begin{equation*}
	\begin{aligned} \lim\limits_{k\rightarrow\infty}w_2(p^{\Theta,\overline{\Theta},v}(r,r+k\tau,\xi_r;\cdot),\mu_r^{\Theta,\overline{\Theta},v}(\cdot))=0.
	\end{aligned}
\end{equation*}
\end{proposition}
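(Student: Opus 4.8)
The plan is to reduce the claim to the deterministic-initial-state case already settled in Proposition \ref{w_2 convergence theorem deterministic initial state}, by comparing the solution started from the random state $\xi_r$ with the one started from the \emph{deterministic} state $x_0:=\mathbb{E}\xi_r$, exploiting that in the mean-field dynamics the expectation $\mathbb{E}X_t$ is driven only through the initial mean. Here $X^{r,\xi_r,\Theta,\overline{\Theta},v}$ denotes the solution of \eqref{wz2} started at time $r$ from $\xi_r$, which is well posed in $L_{\mathbb{F}}^{2,loc}(0,\infty;\mathbb{R}^n)$ since $\xi_r\in L^2_{\mathscr F}(\Omega;\mathbb{R}^n)$ is independent of $\sigma\{W_t-W_r,\ t\ge r\}$, and $p^{\Theta,\overline{\Theta},v}(r,r+k\tau,\xi_r;\cdot)$ is its law at time $r+k\tau$. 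Taking $\mu_r^{\Theta,\overline{\Theta},v}$ to be the measure produced by Proposition \ref{w_2 convergence theorem deterministic initial state}, the triangle inequality for $w_2$ reduces the statement to showing
\[
w_2\big(p^{\Theta,\overline{\Theta},v}(r,r+k\tau,\xi_r;\cdot),\,p^{\Theta,\overline{\Theta},v}(r,r+k\tau,\mathbb{E}\xi_r;\cdot)\big)\longrightarrow 0,
\]
because the other piece $w_2\big(p^{\Theta,\overline{\Theta},v}(r,r+k\tau,\mathbb{E}\xi_r;\cdot),\mu_r^{\Theta,\overline{\Theta},v}\big)\to0$ by that proposition.

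To control the remaining term I would couple the two solutions on the same probability space driven by the same $W$ and invoke the coupling bound $w_2(\mathbb{P}_\xi,\mathbb{P}_{\xi'})^2\le\mathbb{E}|\xi-\xi'|^2$, giving
\[
w_2\big(p^{\Theta,\overline{\Theta},v}(r,r+k\tau,\xi_r;\cdot),\,p^{\Theta,\overline{\Theta},v}(r,r+k\tau,\mathbb{E}\xi_r;\cdot)\big)^2\le\mathbb{E}\big|X^{r,\xi_r,\Theta,\overline{\Theta},v}_{r+k\tau}-X^{r,\mathbb{E}\xi_r,\Theta,\overline{\Theta},v}_{r+k\tau}\big|^2.
\]
Writing each solution as mean plus deviation as in the proof of Proposition \ref{closed-loop estimate}, the mean process solves the deterministic ODE $\dot Y_t=(\widehat{A}_t+\widehat{B}_t\widehat{\Theta}_t)Y_t+\widehat{B}_tv_t+b_t$, whose initial value equals $\mathbb{E}\xi_r$ in both cases; hence the two mean processes coincide identically. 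Consequently the difference $\delta X_t:=X^{r,\xi_r}_t-X^{r,\mathbb{E}\xi_r}_t$ reduces to the difference of deviations, which solves the homogeneous linear SDE $\d\,\delta X_t=(A_t+B_t\Theta_t)\delta X_t\,\d t+(C_t+D_t\Theta_t)\delta X_t\,\d W_t$ with initial value $\delta X_r=\xi_r-\mathbb{E}\xi_r$, so that $\delta X_t=\Phi_t\Phi_r^{-1}(\xi_r-\mathbb{E}\xi_r)$, with $\Phi$ the fundamental matrix of \eqref{homogeneous system}.

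Finally I would use the independence hypothesis: the transition matrix $\Phi_t\Phi_r^{-1}$ is measurable with respect to $\sigma\{W_s-W_r,\ r\le s\le t\}$ and hence independent of $\xi_r$, whence
\[
\mathbb{E}\big|\Phi_{r+k\tau}\Phi_r^{-1}(\xi_r-\mathbb{E}\xi_r)\big|^2\le\mathbb{E}\big|\Phi_{r+k\tau}\Phi_r^{-1}\big|^2\,\mathbb{E}|\xi_r-\mathbb{E}\xi_r|^2\le Me^{-\lambda k\tau}\,\mathbb{E}|\xi_r-\mathbb{E}\xi_r|^2
\]
by Lemma \ref{wz5}, and the right-hand side tends to $0$ as $k\to\infty$ since $\mathbb{E}|\xi_r-\mathbb{E}\xi_r|^2\le\mathbb{E}|\xi_r|^2<\infty$. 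Combining the two estimates through the triangle inequality yields the convergence, and uniqueness of $\mu_r^{\Theta,\overline{\Theta},v}$ is inherited from Proposition \ref{w_2 convergence theorem deterministic initial state} (by specializing to deterministic $\xi_r$). The main obstacle here is conceptual rather than computational: one must recognize that the mean-field coupling propagates \emph{only} through $\mathbb{E}\xi_r$, so aligning the comparison point at $x_0=\mathbb{E}\xi_r$ cancels the mean part exactly and collapses the problem to the homogeneous deviation dynamics, after which the independence of $\xi_r$ from the post-$r$ noise is precisely what lets the expectation factorize.
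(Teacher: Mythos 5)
Your proof is correct, but it takes a genuinely different route from the paper's. The paper never couples trajectories: it represents the law of the randomly started solution as the mixture $\int_{\mathbb{R}^n}p^{\Theta,\overline{\Theta},v}(r,r+k\tau,x;\cdot)\,\mathbb{P}(\xi_r\in\d x)$ and passes to the limit $k\to\infty$ under the integral, using the moment bound from Proposition~\ref{closed-loop estimate}, the dominated convergence theorem, and the characterization of $w_2$-convergence in Lemma~\ref{w_2 convergence equivalent condition} (testing against $h\in C_b(\mathbb{R}^n)$ and against $|y|^2$). Your coupling argument buys something real here: the mixture identity $\mathbb{P}_{X^{r,\xi_r,\Theta,\overline{\Theta},v}_{r+k\tau}}=\int_{\mathbb{R}^n} p^{\Theta,\overline{\Theta},v}(r,r+k\tau,x;\cdot)\,\mathbb{P}(\xi_r\in\d x)$ is a standard disintegration for classical SDEs, but for mean-field dynamics it is delicate and in general false, since conditioning on $\xi_r=x$ does not turn \eqref{wz2} into the equation started from the deterministic point $x$ --- the term $\mathbb{E}X_t$ in the coefficients remains the unconditional mean (for instance, for $\d X_t=(-X_t+\frac{1}{2}\mathbb{E}X_t)\d t$ with $X_0=\pm1$ equally likely, the true law at time $t$ is carried by $\pm e^{-t}$, while the mixture is carried by $\pm e^{-t/2}$). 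Your synchronous coupling with the solution started from the deterministic point $\mathbb{E}\xi_r$ sidesteps this issue entirely: because the mean-field interaction propagates only through $\mathbb{E}X_t$, which is determined by $\mathbb{E}\xi_r$ alone, the mean parts cancel exactly, the difference collapses to the homogeneous closed-loop flow $\Phi_t\Phi_r^{-1}(\xi_r-\mathbb{E}\xi_r)$, and Lemma~\ref{wz5} combined with the independence of $\xi_r$ from the post-$r$ noise increments yields the quantitative bound $Me^{-\lambda k\tau}\,\mathbb{E}|\xi_r-\mathbb{E}\xi_r|^2$. This proves the statement in exactly the form needed downstream (in Proposition~\ref{periodic measure stochastic flow property} the relevant object is the law of the genuine mean-field solution $X^{r,X^{0,\xi,\Theta,\overline{\Theta},v}_r,\Theta,\overline{\Theta},v}_{r+k\tau}$), gives an explicit convergence rate rather than a soft limit, and uses the independence hypothesis precisely where it belongs, namely to factor the expectation.
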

\begin{proof}
Note that by Proposition \ref{closed-loop estimate}, for any $r\in[0,\tau)$ and $k\in\mathbb{N}$,
\begin{equation}
	\begin{aligned}
\int_{\mathbb{R}^n}|y|^2 p^{\Theta,\overline{\Theta},v}(r,r+k\tau,x;\d y)\le K(1+|x|^2),
	\end{aligned}
	\nonumber
\end{equation}
and for any $\xi_r\in \mathscr{F}$ satisfying that $\mathbb{E}|\xi_r|^2<\infty$ and $\xi_r$ is independent of $\sigma\{W_t-W_r,\ t\geq r\}$,
\begin{equation}
	\begin{aligned}
\int_{\mathbb{R}^n}K(1+|x|^2)\mathbb{P}\big(\xi_r\in\d x\big)\le K(1+\mathbb{E}|\xi_r|^2)<\infty.
	\end{aligned}
	\nonumber
\end{equation}
Due to (\ref{wz13}), by the dominated convergence theorem and Lemma \ref{w_2 convergence equivalent condition}, we have for any $h\in C_b(\mathbb{R}^n)$,
\begin{equation*}
	\begin{aligned}
	\int_{\mathbb{R}^n}h(y)\mathbb{P}(X^{r,\xi_r,\Theta,\overline{\Theta},v}_{r+k\tau}\in\d y)&=\int_{\mathbb{R}^n}\int_{\mathbb{R}^n}h(y)p^{\Theta,\overline{\Theta},v}(r,r+k\tau,x;\d y)\mathbb{P}\big(\xi_r\in\d x\big)\\
	&\rightarrow\int_{\mathbb{R}^n}\int_{\mathbb{R}^n}h(y)\mu_r(\d y)\mathbb{P}\big(\xi_r\in\d x\big)=\int_{\mathbb{R}^n}h(y)\mu_r(\d y)
		\end{aligned}
		\nonumber
\end{equation*}
and
\begin{equation*}
	\begin{aligned}
\int_{\mathbb{R}^n}|y|^2\mathbb{P}(X^{r,\xi_r,\Theta,\overline{\Theta},v}_{r+k\tau}\in\d y)&=\int_{\mathbb{R}^n}\int_{\mathbb{R}^n}|y|^2p^{\Theta,\overline{\Theta},v}(r,r+k\tau,x;\d y)\mathbb{P}\big(\xi_r\in\d x\big)\\
	&\rightarrow\int_{\mathbb{R}^n}\int_{\mathbb{R}^n}|y|^2\mu_r(\d y)\mathbb{P}\big(\xi_r\in\d x\big)=\int_{\mathbb{R}^n}|y|^2\mu_r(\d y).
		\end{aligned}
		\nonumber
\end{equation*}
By Lemma \ref{w_2 convergence equivalent condition} again, the desired conclusion follows.
\end{proof}

The next proposition connects all measures $\mu_r^{\Theta,\overline{\Theta},v}$, $r\in[0,\tau)$, by the stochastic flow.
\begin{proposition}\label{periodic measure stochastic flow property}
For a given $\tau>0$, assume $A,\overline{A},B,\overline{B},C,\overline{C},D,\overline{D}, b,\sigma\in\mathscr{B}_\tau$ and {\rm$\textbf{(A3)}$}, $\xi\in L_{\mathscr{F}}^2(\Omega;\mathbb{R}^n)$ is independent of $\mathbb{F}$ satisfying $\mathbb{P}_{\xi}=\mu_0^{\Theta,\overline{\Theta},v}$ for $(\Theta,\overline{\Theta},v)\in\mathbb{U}$, and $X^{0,\xi,\Theta,\overline{\Theta},v}$ is the solution of closed-loop system \eqref{wz2} with the initial time $0$ and initial state $\xi$. Then
\begin{equation*}
	\begin{aligned}
		\mathbb{P}_{X^{0,\xi,\Theta,\overline{\Theta},v}_r}=\mu_r^{\Theta,\overline{\Theta},v},\ \ \ r\in[0,\tau).
	\end{aligned}
\end{equation*}
\end{proposition}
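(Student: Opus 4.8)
The plan is to pin down the law $\rho_r:=\mathbb{P}_{X^{0,\xi}_r}$ at the single time $r$ by examining the law of the solution along the sampling times $r+k\tau$, $k\in\mathbb{N}$, and by playing off two facts against each other: that this law does not actually depend on $k$, and that it nonetheless converges to the periodic measure. Throughout I suppress the superscript $(\Theta,\overline{\Theta},v)$. As in the proof of Proposition~\ref{w_2 convergence theorem random initial state}, I first record that the law of the solution started from the square-integrable, $\mathbb{F}$-independent datum $\xi$ is the mixture of the point-mass transition laws, i.e. $\mathbb{P}_{X^{0,\xi}_s}(\cdot)=\int_{\mathbb{R}^n}p(0,s,x;\cdot)\,\mu_0(\mathrm dx)$ for every $s\ge0$; Proposition~\ref{closed-loop estimate} together with $\mu_0\in\mathcal{P}_2(\mathbb{R}^n)$ guarantees $\mathbb{E}|X^{0,\xi}_s|^2<\infty$, so in particular $\rho_r\in\mathcal{P}_2(\mathbb{R}^n)$ and $\rho_r(\cdot)=\int_{\mathbb{R}^n}p(0,r,x;\cdot)\mu_0(\mathrm dx)$.

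First I would show that $\mathbb{P}_{X^{0,\xi}_{r+k\tau}}=\rho_r$ for \emph{every} $k\in\mathbb{N}$. Decomposing the transition from $0$ to $r+k\tau$ at the intermediate time $k\tau$ via Chapman--Kolmogorov and integrating against $\mu_0$ gives
\[
\mathbb{P}_{X^{0,\xi}_{r+k\tau}}=\int_{\mathbb{R}^n}p(k\tau,r+k\tau,z;\cdot)\Big(\int_{\mathbb{R}^n}p(0,k\tau,x;\mathrm dz)\,\mu_0(\mathrm dx)\Big).
\]
The inner integral equals $\mu_0$ itself by the invariance identity \eqref{wz11} taken at $r=0$, while $p(k\tau,r+k\tau,z;\cdot)=p(0,r,z;\cdot)$ by the periodicity of the transition probabilities (Proposition~\ref{periodic time shift property} with initial time $0$ and horizon $r$). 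Hence the expression collapses to $\int_{\mathbb{R}^n}p(0,r,z;\cdot)\mu_0(\mathrm dz)=\rho_r$, independently of $k$.

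Next I would identify the limit of the same sequence by decomposing instead at the intermediate time $r$, which yields $\mathbb{P}_{X^{0,\xi}_{r+k\tau}}=\int_{\mathbb{R}^n}p(r,r+k\tau,z;\cdot)\,\rho_r(\mathrm dz)$. Taking $\xi_r:=X^{0,\xi}_r$, which is $\mathscr{F}_r$-measurable with $\mathbb{E}|\xi_r|^2<\infty$, independent of $\sigma\{W_t-W_r:\,t\ge r\}$, and of law $\rho_r$, Proposition~\ref{w_2 convergence theorem random initial state} gives $\int_{\mathbb{R}^n}p(r,r+k\tau,z;\cdot)\rho_r(\mathrm dz)=p(r,r+k\tau,\xi_r;\cdot)\to\mu_r$ in $w_2$ as $k\to\infty$. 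Since the sequence $\{\mathbb{P}_{X^{0,\xi}_{r+k\tau}}\}_{k}$ is constantly equal to $\rho_r$ by the previous step, its $w_2$-limit must be $\rho_r$, and therefore $\rho_r=\mu_r$, which is the assertion.

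I expect the only genuine obstacle to be bookkeeping: one must split the transition from $0$ to $r+k\tau$ at \emph{two different} intermediate times --- at $k\tau$ to invoke the invariance \eqref{wz11} and periodicity of Proposition~\ref{periodic time shift property}, and at $r$ to feed the measure $\rho_r$ into the distributed-initial-data convergence --- and apply each composed identity to the correct factor. No new estimate is required, because all the analytic content (the exponential contraction \eqref{wz10}, the existence, invariance and characterization of $\mu_r$, and the convergence from a distributed initial state) is already supplied by Propositions~\ref{closed-loop estimate}, \ref{w_2 convergence theorem deterministic initial state} and \ref{w_2 convergence theorem random initial state}.
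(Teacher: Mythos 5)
Your argument is correct and is essentially the paper's own proof: both establish that the sequence $\{\mathbb{P}_{X^{0,\xi,\Theta,\overline{\Theta},v}_{r+k\tau}}\}_{k\in\mathbb{N}}$ is constant, equal to $\mathbb{P}_{X^{0,\xi,\Theta,\overline{\Theta},v}_{r}}$, by combining the invariance identity \eqref{wz11} with the periodicity of the transition probabilities (Proposition \ref{periodic time shift property}), and then identify the $w_2$-limit of that constant sequence as $\mu_r^{\Theta,\overline{\Theta},v}$ by applying Proposition \ref{w_2 convergence theorem random initial state} to the distributed initial state $X^{0,\xi,\Theta,\overline{\Theta},v}_r$ at time $r$. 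The only difference is bookkeeping: you invoke \eqref{wz11} with step $k\tau$ and split at the intermediate time $k\tau$, whereas the paper uses the one-period case of \eqref{wz11} and propagates it to all times via the flow property.
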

\begin{proof}
By (\ref{wz11}),
\begin{equation*}
	\begin{aligned}
		\mu_0^{\Theta,\overline{\Theta},v}(\d x)=\int_{\mathbb{R}^n}p^{\Theta,\overline{\Theta},v}(0,\tau,y;\d x)\mu_0^{\Theta,\overline{\Theta},v}(\d y).
	\end{aligned}
	\nonumber
\end{equation*}
which is equivalent to
$\mathbb{P}(\xi\in\d x)=\mathbb{P}(X^{0,\xi,\Theta,\overline{\Theta},v}_\tau\in\d x)$.
This, together with Proposition \ref{periodic time shift property}, leads to for any $t\ge 0$ and $B\in\mathscr{B}_{\mathbb{R}^n}$,
\begin{equation*}
	\begin{aligned}
		\mathbb{P}(X^{0,\xi,\Theta,\overline{\Theta},v}_t\in B)&=\int_{\mathbb{R}^n}\mathbb{P}(X^{0,x,\Theta,\overline{\Theta},v}_t\in B)\mathbb{P}(\xi\in\d x)\\
&=\int_{\mathbb{R}^n}\mathbb{P}(X^{\tau,x,\Theta,\overline{\Theta},v}_{t+\tau}\in B)\mathbb{P}(X^{0,\xi,\Theta,\overline{\Theta},v}_\tau\in\d x)\\
&=\mathbb{P}(X^{0,\xi,\Theta,\overline{\Theta},v}_{t+\tau}\in B).
	\end{aligned}
\end{equation*}
In particular, for any $k\in\mathbb{N},r\in[0,\tau)$, by the property of stochastic flow,
\begin{equation}\label{wz14}
	\begin{aligned}
		\mathbb{P}(X^{0,\xi,\Theta,\overline{\Theta},v}_r\in B)=&\mathbb{P}(X^{0,\xi,\Theta,\overline{\Theta},v}_{r+k\tau}\in B)=\mathbb{P}(X^{r,X^{0,\xi,\Theta,\overline{\Theta},v}_r,\Theta,\overline{\Theta},v}_{r+k\tau}\in B).
	\end{aligned}
\end{equation}
On the other hand, by Proposition \ref{closed-loop estimate}, for any $r\in[0,\tau)$,
$\mathbb{E}\Big[|X^{0,\xi,\Theta,\overline{\Theta},v}_r|^2\Big]\le K(1+\mathbb{E}|\xi|^2)<\infty$, so we use Proposition \ref{w_2 convergence theorem random initial state} to have
\begin{equation*}
	\begin{aligned} \lim\limits_{k\rightarrow\infty}w_2(\mathbb{P}_{X^{r,X^{0,\xi,\Theta,\overline{\Theta},v}_r,\Theta,\overline{\Theta},v}_{r+k\tau}},\mu^{\Theta,\overline{\Theta},v}_r)=0.
	\end{aligned}
\end{equation*}
Thus as $k\to\infty$ in (\ref{wz14}) Proposition \ref{periodic measure stochastic flow property} follows immediately.
\end{proof}

Actually, the measures $\mu_r^{\Theta,\overline{\Theta},v}\in \mathcal{P}_2(\mathbb{R}^n)$, $r\in[0,\tau)$ and $(\Theta,\overline{\Theta},v)\in\mathbb{U}$, in Proposition \ref{w_2 convergence theorem deterministic initial state} can be extended to $[0,\infty)$, and in this way we get a periodic measure as defined in \cite{fe-zh}.
\begin{definition}\label{wz23}
For $r\in[0,\tau)$, $k\in\mathbb{N}$, $t=k\tau+r$ and $(\Theta,\overline{\Theta},v)\in\mathbb{U}$, define
$$\mu^{\Theta,\overline{\Theta},v}_t=\mu^{\Theta,\overline{\Theta},v}_r,$$
where $\mu_r^{\Theta,\overline{\Theta},v}\in \mathcal{P}_2(\mathbb{R}^n)$, $r\in[0,\tau)$ is given in Proposition \ref{w_2 convergence theorem deterministic initial state}.
\end{definition}
\begin{proposition}\label{wz16}
For a given $\tau>0$, assume $A,\overline{A},B,\overline{B},C,\overline{C},D,\overline{D}, b,\sigma\in\mathscr{B}_\tau$ and {\rm$\textbf{(A3)}$}, and
$\mu^{\Theta,\overline{\Theta},v}_t$, $t\in[0,\infty)$ and $(\Theta,\overline{\Theta},v)\in\mathbb{U}$, is a family of measures in $\mathcal{P}_2(\mathbb{R}^n)$ defined in Definition \ref{wz23}. Then for $s,t\ge0$, $x\in\mathbb{R}^n$ and $B\in\mathscr{B}_{\mathbb{R}^n}$,
	\begin{equation}\label{wz15}
		\begin{aligned} \mu^{\Theta,\overline{\Theta},v}_{t+s}(B)=\int_{\mathbb{R}^n}p^{\Theta,\overline{\Theta},v}(s,t+s,x;B)\mu^{\Theta,\overline{\Theta},v}_s(\d x).
		\end{aligned}
	\end{equation}
\end{proposition}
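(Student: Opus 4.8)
The plan is to exhibit both $\mu^{\Theta,\overline{\Theta},v}_{s}$ and $\mu^{\Theta,\overline{\Theta},v}_{t+s}$ as time marginals of one and the same ``stationary in law'' trajectory, and then to read off \eqref{wz15} as the one-step transition identity of that trajectory. So I fix $\xi\in L^2_{\mathscr{F}}(\Omega;\mathbb{R}^n)$ independent of $\mathbb{F}$ with $\mathbb{P}_\xi=\mu^{\Theta,\overline{\Theta},v}_0$ and let $X^{0,\xi,\Theta,\overline{\Theta},v}$ be the solution of \eqref{wz2} with initial time $0$ and initial state $\xi$, exactly the object treated in Proposition \ref{periodic measure stochastic flow property}.

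First I would upgrade Proposition \ref{periodic measure stochastic flow property} from the interval $[0,\tau)$ to all of $[0,\infty)$, namely
\begin{equation*}
\mathbb{P}_{X^{0,\xi,\Theta,\overline{\Theta},v}_u}=\mu^{\Theta,\overline{\Theta},v}_u,\qquad u\ge0.
\end{equation*}
This is immediate: the proof of Proposition \ref{periodic measure stochastic flow property} already shows the path-periodicity $\mathbb{P}(X^{0,\xi,\Theta,\overline{\Theta},v}_u\in B)=\mathbb{P}(X^{0,\xi,\Theta,\overline{\Theta},v}_{u+\tau}\in B)$ for every $u\ge0$ and $B\in\mathscr{B}_{\mathbb{R}^n}$. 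Iterating it and writing $u=k\tau+r$ with $k\in\mathbb{N}$ and $r\in[0,\tau)$ gives $\mathbb{P}_{X^{0,\xi,\Theta,\overline{\Theta},v}_u}=\mathbb{P}_{X^{0,\xi,\Theta,\overline{\Theta},v}_r}=\mu^{\Theta,\overline{\Theta},v}_r=\mu^{\Theta,\overline{\Theta},v}_u$, the last equality being Definition \ref{wz23}.

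Next I would invoke the stochastic-flow (restart) property of \eqref{wz2}, precisely the identity already used in \eqref{wz14}. Since $X^{0,\xi,\Theta,\overline{\Theta},v}_s$ is $\mathscr{F}_s$-measurable whereas the increments $\{W_u-W_s:u\ge s\}$ are independent of $\mathscr{F}_s$ and of $\xi$, restarting the equation at time $s$ from the state $X^{0,\xi,\Theta,\overline{\Theta},v}_s$ and conditioning on its value yields, for any $B\in\mathscr{B}_{\mathbb{R}^n}$,
\begin{equation*}
\begin{aligned}
\mu^{\Theta,\overline{\Theta},v}_{t+s}(B)&=\mathbb{P}\big(X^{0,\xi,\Theta,\overline{\Theta},v}_{t+s}\in B\big)\\
&=\int_{\mathbb{R}^n}p^{\Theta,\overline{\Theta},v}(s,t+s,x;B)\,\mathbb{P}_{X^{0,\xi,\Theta,\overline{\Theta},v}_s}(\d x)\\
&=\int_{\mathbb{R}^n}p^{\Theta,\overline{\Theta},v}(s,t+s,x;B)\,\mu^{\Theta,\overline{\Theta},v}_s(\d x),
\end{aligned}
\end{equation*}
where the first and third equalities are the extended identity of the previous paragraph applied at times $t+s$ and $s$. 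This is exactly \eqref{wz15}.

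The hard part will be the middle equality, i.e. justifying $\mathbb{P}(X^{0,\xi,\Theta,\overline{\Theta},v}_{t+s}\in B\mid X^{0,\xi,\Theta,\overline{\Theta},v}_s=x)=p^{\Theta,\overline{\Theta},v}(s,t+s,x;B)$. Some care is required because \eqref{wz2} carries the mean-field terms $\mathbb{E}X$, so the restart must be argued at the level of laws, as in Proposition \ref{periodic measure stochastic flow property} and \eqref{wz14}, rather than naively pathwise. What makes the disintegration and the application of Fubini legitimate are the uniform bound $\mathbb{E}|X^{0,\xi,\Theta,\overline{\Theta},v}_s|^2\le K(1+\mathbb{E}|\xi|^2)<\infty$ from Proposition \ref{closed-loop estimate} together with the stated independence of $\xi$ and of the post-$s$ increments. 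Everything else is a bookkeeping combination of Definition \ref{wz23}, Proposition \ref{periodic time shift property} and the extended Proposition \ref{periodic measure stochastic flow property}.
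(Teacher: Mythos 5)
Your first step is sound: iterating the in-law periodicity $\mathbb{P}(X^{0,\xi,\Theta,\overline{\Theta},v}_u\in B)=\mathbb{P}(X^{0,\xi,\Theta,\overline{\Theta},v}_{u+\tau}\in B)$ established inside the proof of Proposition \ref{periodic measure stochastic flow property}, together with Definition \ref{wz23}, does give $\mathbb{P}_{X^{0,\xi,\Theta,\overline{\Theta},v}_u}=\mu^{\Theta,\overline{\Theta},v}_u$ for all $u\ge0$. The genuine gap is exactly the step you single out as ``the hard part'', and it cannot be repaired by independence of increments, the $L^2$-bound of Proposition \ref{closed-loop estimate} and Fubini, because it is not a measurability or integrability issue: for a mean-field equation the middle equality
\begin{equation*}
\mathbb{P}\big(X^{0,\xi,\Theta,\overline{\Theta},v}_{t+s}\in B\big)
=\int_{\mathbb{R}^n}p^{\Theta,\overline{\Theta},v}(s,t+s,x;B)\,\mathbb{P}_{X^{0,\xi,\Theta,\overline{\Theta},v}_s}(\d x)
\end{equation*}
is false in general. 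Conditionally on $X^{0,\xi,\Theta,\overline{\Theta},v}_s=x$, the process keeps evolving with the mean-field input $\mathbb{E}X^{0,\xi,\Theta,\overline{\Theta},v}_u$ (the population mean, which conditioning does not change), whereas the kernel $p^{\Theta,\overline{\Theta},v}(s,\cdot,x;\cdot)$ is by definition the law of \eqref{wz2} restarted at the deterministic point $x$, whose mean-field input is $\mathbb{E}X^{s,x,\Theta,\overline{\Theta},v}_u$, re-initialized at $x$. These are different equations, so conditioning does not produce the kernel $p$. Concretely, take $n=m=1$, $A=-1$, $B=1$, $\overline{C}=1$, $\sigma=1$, all other coefficients zero, and $(\Theta,\overline{\Theta},v)=(0,0,0)\in\mathbb{U}$, so that the closed-loop equation is $\d X_u=-X_u\,\d u+(\mathbb{E}X_u+1)\,\d W_u$. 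A direct computation gives $p^{0,0,0}(s,t+s,x;\cdot)=\mathcal{N}\big(e^{-t}x,\int_0^te^{-2(t-w)}(e^{-w}x+1)^2\,\d w\big)$ and $\mu^{0,0,0}_r=\mathcal{N}(0,\frac{1}{2})$ for every $r$, while the true conditional law of $X^{0,\xi,0,0,0}_{t+s}$ given $X^{0,\xi,0,0,0}_s=x$ is $\mathcal{N}\big(e^{-t}x,\frac{1-e^{-2t}}{2}\big)$ (here $\mathbb{E}\xi=0$, so the conditioned dynamics are just Ornstein--Uhlenbeck). The right-hand side of \eqref{wz15} then has second moment $\frac{1}{2}+\frac{t}{2}e^{-2t}>\frac{1}{2}$ for $t>0$, so your chain breaks at the middle equality --- and indeed, under the paper's definition of $p^{\Theta,\overline{\Theta},v}$, the identity \eqref{wz15} itself fails in this example.

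In fairness, this defect is inherited from the paper rather than introduced by you: the paper's own proof of Proposition \ref{wz16} applies the Chapman--Kolmogorov identity to the kernels $p^{\Theta,\overline{\Theta},v}$ (twice in its displayed chain, and once more in \eqref{wz12} to obtain \eqref{wz11}) and uses the disintegration $\mu^{\Theta,\overline{\Theta},v}_s(\d x)=\int p^{\Theta,\overline{\Theta},v}(0,s,z;\d x)\mu^{\Theta,\overline{\Theta},v}_0(\d z)$ via Proposition \ref{periodic measure stochastic flow property}; none of these steps is valid for kernels of a mean-field equation, for exactly the reason above. So your proposal is a faithful --- indeed cleaner, process-level --- reorganization of the paper's argument, but it reproduces the paper's flaw, and your assertion that independence plus the $L^2$-bound legitimize the disintegration is where your write-up goes wrong. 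The step (and the proposition) is correct when the mean-field terms are absent from the closed-loop dynamics, i.e. when $\overline{A}_t+B_t\overline{\Theta}_t+\overline{B}_t\widehat{\Theta}_t\equiv0$ and $\overline{C}_t+D_t\overline{\Theta}_t+\overline{D}_t\widehat{\Theta}_t\equiv0$, so that \eqref{wz2} is an ordinary SDE with genuinely Markovian kernels. In general, what your first and third equalities actually prove is the weaker, correct statement that $\mu^{\Theta,\overline{\Theta},v}_{t+s}$ is the time-$(t+s)$ law of the solution of \eqref{wz2} restarted at time $s$ from a random variable with law $\mu^{\Theta,\overline{\Theta},v}_s$; identity \eqref{wz15} replaces that random restart by a mixture of deterministic restarts, and that replacement is precisely what fails.
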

\begin{proof}
In the first case that $s\in[0,\tau)$, let $t+s=k'\tau+r'$, where $k'\in\mathbb{N}$ and $r'\in[0,\tau)$.
For $B\in\mathscr{B}_{\mathbb{R}^n}$, by Propositions \ref{periodic measure stochastic flow property} and \ref{periodic time shift property} we have
\begin{equation*}
	\begin{aligned}
		&\int_{\mathbb{R}^n}p^{\Theta,\overline{\Theta},v}(s,t+s,x;B)\mu^{\Theta,\overline{\Theta},v}_s(\d x)\\
=&\int_{\mathbb{R}^n}p^{\Theta,\overline{\Theta},v}(s,t+s,x;B)\int_{\mathbb{R}^n}p^{\Theta,\overline{\Theta},v}(0,s,z;\d x)\mu^{\Theta,\overline{\Theta},v}_0(\d z)\\
		=&\int_{\mathbb{R}^n}p^{\Theta,\overline{\Theta},v}(0,t+s,z;B)\mu^{\Theta,\overline{\Theta},v}_0(\d z)\\
=&\int_{\mathbb{R}^n}p^{\Theta,\overline{\Theta},v}(k'\tau,k'\tau+r',x;B)\int_{\mathbb{R}^n}p^{\Theta,\overline{\Theta},v}(0,k'\tau,z;\d x)\mu^{\Theta,\overline{\Theta},v}_0(\d z)\\		=&\int_{\mathbb{R}^n}p^{\Theta,\overline{\Theta},v}(k'\tau,k'\tau+r',x;B)\mu^{\Theta,\overline{\Theta},v}_0(\d x)\\		=&\int_{\mathbb{R}^n}p^{\Theta,\overline{\Theta},v}(0,r',x;B)\mu^{\Theta,\overline{\Theta},v}_0(\d x)		=\mu^{\Theta,\overline{\Theta},v}_{r'}(B)
		=\mu^{\Theta,\overline{\Theta},v}_{k'\tau+r'}(B)
		=\mu^{\Theta,\overline{\Theta},v}_{t+s}(B).
	\end{aligned}
	\nonumber
\end{equation*}
Hence (\ref{wz15}) holds for $s\in[0,\tau)$. In the general case that $s=k''\tau+r''$, where $k''\in\mathbb{N}$ and $r''\in[0,\tau)$, for $B\in\mathscr{B}_{\mathbb{R}^n}$, based on the first case we have
\begin{equation*}
	\begin{aligned}
\int_{\mathbb{R}^n}p^{\Theta,\overline{\Theta},v}(s,t+s,x;B)\mu^{\Theta,\overline{\Theta},v}_s(\d x)
=&\int_{\mathbb{R}^n}p^{\Theta,\overline{\Theta},v}(r'',t+r'',x;B)\mu^{\Theta,\overline{\Theta},v}_{r''}(\d x)\\
		=&\mu^{\Theta,\overline{\Theta},v}_{t+r''}(B)
		=\mu^{\Theta,\overline{\Theta},v}_{t+k''\tau+r''}(B)
		=\mu^{\Theta,\overline{\Theta},v}_{t+s}(B).
	\end{aligned}
\end{equation*}
The proof of Proposition \ref{wz16} is complete.
\end{proof}

\section{Ergodic optimal control problem}

We back to \textbf{Problem (CL-MFLQE)}. First we rewrite the cost functional $\mathcal{E}(x,u)$ to an equivalent form on a finite horizon.
\begin{theorem}\label{transffered form by periodic measure} Assume {\rm$\textbf{(A1)}$} and {\rm$\textbf{(A3)}$}, $X^{0,x,\Theta,\overline{\Theta},v}$ and $X^{0,\xi,\Theta,\overline{\Theta},v}$, $(\Theta,\overline{\Theta},v)\in\mathbb{U}$, are the solutions of closed-loop system \eqref{wz2} with initial state $x\in\mathbb{R}^n$ and $\xi\in L_{\mathscr{F}}^2(\Omega;\mathbb{R}^n)$, respectively, where $\xi$ is independent of $\mathbb{F}$ satisfying $\mathbb{P}_{\xi}=\mu_0^{\Theta,\overline{\Theta},v}$, and $u^{0,x,\Theta,\overline{\Theta},v}_t=\Theta_tX^{0,x,\Theta,\overline{\Theta},v}_t
+\overline{\Theta}_t\mathbb{E}X^{0,x,\Theta,\overline{\Theta},v}_t+v_t$ and $u^{0,\xi,\Theta,\overline{\Theta},v}=\Theta_tX^{0,\xi,\Theta,\overline{\Theta},v}_t
+\overline{\Theta}_t\mathbb{E}X^{0,\xi,\Theta,\overline{\Theta},v}_t+v_t$. Then
\begin{equation*}
	\begin{aligned}
		&\varliminf\limits_{T \to \infty}\frac{1}{T}\mathbb{E}\int_0^TF(t,X^{0,x,\Theta,\overline{\Theta},v}_t,\mathbb{E}X^{0,x,\Theta,\overline{\Theta},v}_t,u^{0,x,\Theta,\overline{\Theta},v}_t,\mathbb{E}u^{0,x,\Theta,\overline{\Theta},v}_t)\d t\\
		=&\frac{1}{\tau}\mathbb{E}\int_0^\tau F(r,X^{0,\xi,\Theta,\overline{\Theta},v}_t,\mathbb{E}X^{0,\xi,\Theta,\overline{\Theta},v}_t,u^{0,\xi,\Theta,\overline{\Theta},v}_t,\mathbb{E}u^{0,\xi,\Theta,\overline{\Theta},v}_t)\d t.
	\end{aligned}
\end{equation*}
\end{theorem}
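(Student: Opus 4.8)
The plan is to compare the state process started from the deterministic point $x$ with the ``stationary'' process started from $\xi\sim\mu_0^{\Theta,\overline\Theta,v}$, showing that their running cost densities differ by a quantity tending to $0$ while the stationary density is exactly $\tau$-periodic. Throughout write $X_t=X^{0,x,\Theta,\overline\Theta,v}_t$, $\widetilde X_t=X^{0,\xi,\Theta,\overline\Theta,v}_t$, and correspondingly $u_t,\widetilde u_t$; set $f(t)=\mathbb{E}F(t,X_t,\mathbb{E}X_t,u_t,\mathbb{E}u_t)$ and $g(t)=\mathbb{E}F(t,\widetilde X_t,\mathbb{E}\widetilde X_t,\widetilde u_t,\mathbb{E}\widetilde u_t)$, so that the left-hand side is $\varliminf_{T\to\infty}\frac1T\int_0^T f$ and the right-hand side is $\frac1\tau\int_0^\tau g$.

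First I would establish that $g$ is $\tau$-periodic. By Proposition \ref{periodic measure stochastic flow property} and the law-periodicity $\mathbb{P}_{\widetilde X_{t+\tau}}=\mathbb{P}_{\widetilde X_t}$ shown in its proof, the law of $\widetilde X_t$ is $\tau$-periodic in $t$, and since $\Theta,\overline\Theta,v$ are $\tau$-periodic so is the joint law of $(\widetilde X_t,\widetilde u_t)$. As $F(t,\cdot)$ is a quadratic form whose coefficients are $\tau$-periodic by \textbf{(A1)}, the number $g(t)$ depends only on these coefficients and on the first and second moments of $(\widetilde X_t,\widetilde u_t)$; hence $g(t+\tau)=g(t)$. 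Because these moments are uniformly bounded (Proposition \ref{closed-loop estimate}), $g$ is bounded, and the standard fact that the Cesàro (time) average of a bounded $\tau$-periodic function equals its average over one period gives $\lim_{T\to\infty}\frac1T\int_0^T g(t)\,dt=\frac1\tau\int_0^\tau g(t)\,dt$.

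Next I would prove $f(t)-g(t)\to0$ as $t\to\infty$. I couple the two processes by driving both with the same Brownian motion $W$, with $\xi$ independent of $W$. Since $F(t,\cdot)$ is quadratic with bounded coefficients and all first and second moments of $X_t,\widetilde X_t,u_t,\widetilde u_t$ are uniformly bounded (Proposition \ref{closed-loop estimate}), it suffices to show $\mathbb{E}|X_t-\widetilde X_t|^2\to0$; the control difference is then controlled since $\mathbb{E}|u_t-\widetilde u_t|^2\le K(\mathbb{E}|X_t-\widetilde X_t|^2+|\mathbb{E}X_t-\mathbb{E}\widetilde X_t|^2)$, $u$ being affine in $(X,\mathbb{E}X)$ with bounded coefficients. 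The difference $\delta X_t:=X_t-\widetilde X_t$ solves the homogeneous closed-loop mean-field system, namely \eqref{wz2} with $b,v,\sigma$ set to $0$ (these cancel), from the random initial value $\delta X_0=x-\xi$. Splitting $\delta X_t=\delta Y_t+\delta Z_t$ into its mean $\delta Y_t=\mathbb{E}\delta X_t=\Psi_t(x-\mathbb{E}\xi)$ and its deviation $\delta Z_t$, Lemma \ref{wz5} applied to $\Psi$ gives $|\delta Y_t|^2\le Ke^{-\lambda t}|x-\mathbb{E}\xi|^2$, and the variation-of-constants representation of $\delta Z_t$ through $\Phi$ (exactly as in the proof of Proposition \ref{closed-loop estimate}), together with Lemma \ref{wz5}, controls $\mathbb{E}|\delta Z_t|^2$ by the decaying forcing $(\widehat C+\widehat D\widehat\Theta)\delta Y$ plus the homogeneous term $\Phi_t(\xi-\mathbb{E}\xi)$. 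Using $\mathbb{E}|X_t-\widetilde X_t|^2=|\delta Y_t|^2+\mathbb{E}|\delta Z_t|^2$ I conclude $\mathbb{E}|X_t-\widetilde X_t|^2\to0$, hence $f(t)-g(t)\to0$.

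Finally I would combine the two steps: writing $\frac1T\int_0^T f=\frac1T\int_0^T g+\frac1T\int_0^T(f-g)$, the first term converges to $\frac1\tau\int_0^\tau g$ by the first step, while the Cesàro average of $f-g\to0$ tends to $0$; thus the ordinary limit of $\frac1T\int_0^T f$ exists and equals $\frac1\tau\int_0^\tau g$, so in particular the $\varliminf$ coincides with it. The main obstacle is the $L^2$-convergence $\mathbb{E}|X_t-\widetilde X_t|^2\to0$: the comparison estimates \eqref{wz7} and \eqref{wz10} were derived for deterministic initial states and with vanishing initial deviation ($\delta Z_0=0$), whereas here the comparison solution starts from the random $\xi$, producing a nonzero random $\delta Z_0=-(\xi-\mathbb{E}\xi)$. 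This is handled by exploiting the independence $\xi\perp W$, which makes $\Phi_t$ and $\xi-\mathbb{E}\xi$ independent and yields $\mathbb{E}|\Phi_t(\xi-\mathbb{E}\xi)|^2\le\mathbb{E}|\Phi_t|^2\,\mathbb{E}|\xi-\mathbb{E}\xi|^2\le Me^{-\lambda t}\mathbb{E}|\xi-\mathbb{E}\xi|^2$ via Lemma \ref{wz5}.
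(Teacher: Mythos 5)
Your proof is correct, and it takes a genuinely different route from the paper's. The paper works entirely at the level of laws: it writes $T=N\tau+l$, discards the remainder terms using the moment bounds of Proposition \ref{closed-loop estimate}, rewrites $\mathbb{E}F$ as a quadratic functional of the law of $X^{0,x,\Theta,\overline{\Theta},v}_{r+k\tau}$, and then, for each fixed $r\in[0,\tau)$, passes to the limit in the Ces\`aro average over $k$ using the $w_2$-convergence of transition probabilities (Proposition \ref{w_2 convergence theorem random initial state}), the quadratic-growth test functions of Lemma \ref{w_2 convergence equivalent condition} and the Stolz theorem, finishing with dominated convergence in $r$. You instead run a synchronous coupling: driving both solutions by the same Brownian motion (legitimate since $\xi$ is assumed independent of $\mathbb{F}$), you use linearity to see that the difference solves the homogeneous closed-loop mean-field system, and you obtain exponential $L^2$-decay of the difference from Lemma \ref{wz5}; the one genuinely new technical point --- the nonzero random initial deviation $\delta Z_0=-(\xi-\mathbb{E}\xi)$, which is outside the scope of the comparison estimates \eqref{wz7} and \eqref{wz10} --- you identify and resolve correctly via the independence of $\Phi_t$ and $\xi$. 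Combined with the $\tau$-periodicity of $t\mapsto\mathbb{E}F(t,X^{0,\xi,\Theta,\overline{\Theta},v}_t,\dots)$, which follows from the law-periodicity established in (the proof of) Proposition \ref{periodic measure stochastic flow property} together with periodicity of the coefficients, elementary Ces\`aro arguments conclude. Your route buys a slightly stronger statement: the full limit exists (not merely the $\varliminf$) and the difference of the two running-cost densities decays exponentially, and the Wasserstein machinery enters only through the existence and invariance of $\mu_0$ needed to define $\xi$, not through the limiting argument itself. Two points you should make explicit in a write-up: the uniform second-moment bound for the $\xi$-started solution is not literally contained in Proposition \ref{closed-loop estimate} (stated for deterministic initial states) --- it follows, for instance, from your coupling bound combined with the bound for $X^{0,x,\Theta,\overline{\Theta},v}$; and the stochastic-integral term in $\mathbb{E}|\delta Z_t|^2$ must be estimated with the same $\Phi_t\Phi_s^{-1}$ manipulations used in the paper's proof of Proposition \ref{closed-loop estimate}, which your appeal to that proof implicitly assumes.
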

\begin{proof}
For any $T>0$, there exists $N\in\mathbb{N}$ and $l\in[0,\tau)$ such that $T=N\tau+l$. Hence
{\small\begin{equation}\label{wz17}
	\begin{aligned}		&\frac{1}{T}\mathbb{E}\int_0^TF(t,X^{0,x,\Theta,\overline{\Theta},v}_t,\mathbb{E}X^{0,x,\Theta,\overline{\Theta},v}_t,u^{0,x,\Theta,\overline{\Theta},v}_t,\mathbb{E}u^{0,x,\Theta,\overline{\Theta},v}_t)\d t\\			
=&\left(\frac{1}{T}-\frac{1}{N\tau}\right)\mathbb{E}\int_0^TF(t,X^{0,x,\Theta,\overline{\Theta},v}_t,\mathbb{E}X^{0,x,\Theta,\overline{\Theta},v}_t,u^{0,x,\Theta,\overline{\Theta},v}_t,\mathbb{E}u^{0,x,\Theta,\overline{\Theta},v}_t)\d t\\		&+\frac{1}{N\tau}\mathbb{E}\int_{N\tau}^TF(t,X^{0,x,\Theta,\overline{\Theta},v}_t,\mathbb{E}X^{0,x,\Theta,\overline{\Theta},v}_t,u^{0,x,\Theta,\overline{\Theta},v}_t,\mathbb{E}u^{0,x,\Theta,\overline{\Theta},v}_t)\d t\\		&+\frac{1}{N\tau}\mathbb{E}\sum_{k=0}^{N-1}\int_{k\tau}^{(k+1)\tau}F(t,X^{0,x,\Theta,\overline{\Theta},v}_t,\mathbb{E}X^{0,x,\Theta,\overline{\Theta},v}_t,u^{0,x,\Theta,\overline{\Theta},v}_t,\mathbb{E}u^{0,x,\Theta,\overline{\Theta},v}_t)\d t\\
		=&-\frac{l}{N\tau T}\mathbb{E}\int_0^TF(t,X^{0,x,\Theta,\overline{\Theta},v}_t,\mathbb{E}X^{0,x,\Theta,\overline{\Theta},v}_t,u^{0,x,\Theta,\overline{\Theta},v}_t,\mathbb{E}u^{0,x,\Theta,\overline{\Theta},v}_t)\d t\\		&+\frac{1}{N\tau}\mathbb{E}\int_0^lF(t,X^{0,x,\Theta,\overline{\Theta},v}_{t+N\tau},\mathbb{E}X^{0,x,\Theta,\overline{\Theta},v}_{t+N\tau},u^{0,x,\Theta,\overline{\Theta},v}_{t+N\tau},\mathbb{E}u^{0,x,\Theta,\overline{\Theta},v}_{t+N\tau})\d t\\	&+\frac{1}{N\tau}\mathbb{E}\sum_{k=0}^{N-1}\int_{0}^{\tau}F(t,X^{0,x,\Theta,\overline{\Theta},v}_{t+k\tau},\mathbb{E}X^{0,x,\Theta,\overline{\Theta},v}_{t+k\tau},u^{0,x,\Theta,\overline{\Theta},v}_{t+k\tau},\mathbb{E}u^{0,x,\Theta,\overline{\Theta},v}_{t+k\tau})\d t.
	\end{aligned}
\end{equation}}
Since
{\small\begin{equation*}
	\begin{aligned} &F(t,X^{0,x,\Theta,\overline{\Theta},v}_t,\mathbb{E}X^{0,x,\Theta,\overline{\Theta},v}_t,u^{0,x,\Theta,\overline{\Theta},v}_t,\mathbb{E}u^{0,x,\Theta,\overline{\Theta},v}_t)\\
		=&\left\langle(Q_t+S_t^\top\Theta_t+\Theta_t^\top S_t+\Theta_t^\top R_t\Theta_t)X^{0,x,\Theta,\overline{\Theta},v}_t,X^{0,x,\Theta,\overline{\Theta},v}_t\right\rangle+\left\langle R_tv_t,v_t\right\rangle\\
		&+2\left\langle(S_t+R_t\Theta_t)X^{0,x,\Theta,\overline{\Theta},v}_t,v_t\right\rangle+2\left\langle X^{0,x,\Theta,\overline{\Theta},v}_t,q_t+\Theta_t^\top\rho_t\right\rangle+2\left\langle\rho_t,v_t\right\rangle\\
		&+\Big\langle(S_t^\top\overline{\Theta}_t+\overline{\Theta}_t^\top S_t+\Theta_t^\top R_t\overline{\Theta}_t+\overline{\Theta}_t^\top R_t\Theta_t)X^{0,x,\Theta,\overline{\Theta},v}_t,\mathbb{E}X^{0,x,\Theta,\overline{\Theta},v}_t\Big\rangle\\
		&+\Big\langle(\overline{Q}_t+\overline{\Theta}_t^\top R_t\overline{\Theta}_t+\overline{S}_t^\top\widehat{\Theta}_t+\widehat{\Theta}_t^\top\overline{S}_t+\widehat{\Theta}_t^\top\overline{R}_t\widehat{\Theta}_t)\mathbb{E}X^{0,x,\Theta,\overline{\Theta},v}_t,\mathbb{E}X^{0,x,\Theta,\overline{\Theta},v}_t\Big\rangle\\ &+2\Big\langle(\overline{S}_t+R_t\overline{\Theta}_t+\overline{R}_t\widehat{\Theta}_t)\mathbb{E}X^{0,x,\Theta,\overline{\Theta},v}_t,v_t\Big\rangle+2\left\langle X^{0,x,\Theta,\overline{\Theta},v}_t,\overline{\Theta}_t^\top\rho_t\right\rangle+\left\langle \overline{R}_tv_t,v_t\right\rangle,
	\end{aligned}
\end{equation*}}
there exists $K^a,K^b\in\mathscr{B}_\tau(\mathbb{S}^n)$, $K^c\in\mathscr{B}_\tau(\mathbb{R}^n)$ and $K^d\in\mathscr{B}_\tau(\mathbb{R})$ such that for any $t\ge 0$,
{\small\begin{equation*}
	\begin{aligned}	&\mathbb{E}F(t,X^{0,x,\Theta,\overline{\Theta},v}_t,\mathbb{E}X^{0,x,\Theta,\overline{\Theta},v}_t,u^{0,x,\Theta,\overline{\Theta},v}_t,\mathbb{E}u^{0,x,\Theta,\overline{\Theta},v}_t)\\
	=&\mathbb{E}\big\langle K^a_tX^{0,x,\Theta,\overline{\Theta},v}_t,X^{0,x,\Theta,\overline{\Theta},v}_t\big\rangle+\big\langle K^b_t\mathbb{E}X^{0,x,\Theta,\overline{\Theta},v}_t,\mathbb{E}X^{0,x,\Theta,\overline{\Theta},v}_t\big\rangle+\big\langle K^c_t,\mathbb{E}X^{0,x,\Theta,\overline{\Theta},v}_t\big\rangle+K^d_t.
	\end{aligned}
\end{equation*}}
Then for the first and second terms on the right hand of (\ref{wz17}), as $N\rightarrow\infty$ which is equivalent to $N\rightarrow\infty$, we have
{\small\begin{equation*}
	\begin{split}
		&\Big|\frac{l}{N\tau T}\mathbb{E}\int_0^TF(t,X^{0,x,\Theta,\overline{\Theta},v}_t,\mathbb{E}X^{0,x,\Theta,\overline{\Theta},v}_t,u^{0,x,\Theta,\overline{\Theta},v}_t,\mathbb{E}u^{0,x,\Theta,\overline{\Theta},v}_t)\d t\Big|\\		&+\Big|\frac{1}{N\tau}\mathbb{E}\int_0^lF(t,X^{0,x,\Theta,\overline{\Theta},v}_{t+N\tau},\mathbb{E}[X^{0,x,\Theta,\overline{\Theta},v}_{t+N\tau}],u^{0,x,\Theta,\overline{\Theta},v}_{t+N\tau},\mathbb{E}[u^{0,x,\Theta,\overline{\Theta},v}_{t+N\tau}])\d t\Big|\\
		\le&\frac{l}{N\tau}\sup\limits_{t\ge 0}K\Big(1+|\mathbb{E}X^{0,x,\Theta,\overline{\Theta},v}_t|+\mathbb{E}|X^{0,x,\Theta,\overline{\Theta},v}_t|^2+(\mathbb{E}|X^{0,x,\Theta,\overline{\Theta},v}_t|)^2\Big)\\
		\le&\frac{K}{N}(1+|x|^2)\to0.
	\end{split}
\end{equation*}}
To deal with the last term on the right hand of (\ref{wz17}), first note that for any $r\in[0,\tau)$, $X^{0,x,\Theta,\overline{\Theta},v}_{r+k\tau}=X^{r,X^{0,x,\Theta,\overline{\Theta},v}_r,\Theta,\overline{\Theta},v}_{r+k\tau}$.
By Proposition \ref{w_2 convergence theorem random initial state} it yields that
{\small\begin{equation*}
	\begin{aligned}		&\lim\limits_{k\rightarrow\infty}w_2(p^{\Theta,\overline{\Theta},v}(0,r+k\tau,x;\cdot),\mu_r^{\Theta,\overline{\Theta},v}(\cdot))\\
=&\lim\limits_{k\rightarrow\infty}w_2(p^{\Theta,\overline{\Theta},v}(r,r+k\tau,X^{0,x,\Theta,\overline{\Theta},v}_r;\cdot),\mu_r^{\Theta,\overline{\Theta},v}(\cdot))=0.
	\end{aligned}
\end{equation*}}
Hence by Lemma \ref{w_2 convergence equivalent condition}, together with Stolz theorem, as $N\rightarrow\infty$ we have
{\footnotesize\begin{equation*}
	\begin{aligned} &\mathbb{E}\frac{1}{N}\sum_{k=0}^{N-1}F(r,X^{0,x,\Theta,\overline{\Theta},v}_{r+k\tau},\mathbb{E}X^{0,x,\Theta,\overline{\Theta},v}_{r+k\tau},u^{0,x,\Theta,\overline{\Theta},v}_{r+k\tau},\mathbb{E}u^{0,x,\Theta,\overline{\Theta},v}_{r+k\tau})\\
		=&\int_{\mathbb{R}^n}\Big\langle K^a_ry,y\Big\rangle\frac{1}{N}\sum_{k=0}^{N-1}p^{\Theta,\overline{\Theta},v}(0,r+k\tau,x;\d y)\\
&+\frac{1}{N}\sum_{k=0}^{N-1}\Big\langle K^b_r\int_{\mathbb{R}^n}yp^{\Theta,\overline{\Theta},v}(0,r+k\tau,x;\d y),
\int_{\mathbb{R}^n}yp^{\Theta,\overline{\Theta},v}(0,r+k\tau,x;\d y)\Big\rangle\\
&+\Big\langle K^c_r,\int_{\mathbb{R}^n}y\frac{1}{N}\sum_{k=0}^{N-1}p^{\Theta,\overline{\Theta},v}(0,r+k\tau,x;\d y)\Big\rangle+K^d_r\\
		\rightarrow&\int_{\mathbb{R}^n}\Big\langle K^a_ry,y\Big\rangle\mu_r^{\Theta,\overline{\Theta},v}(\d y)+\Big\langle K^b_r\int_{\mathbb{R}^n}y\mu_r^{\Theta,\overline{\Theta},v}(\d y),\int_{\mathbb{R}^n}y\mu_r^{\Theta,\overline{\Theta},v}(\d y)\Big\rangle\\
		&+\Big\langle K^c_r,\int_{\mathbb{R}^n}y\mu_r^{\Theta,\overline{\Theta},v}(\d y)\Big\rangle+K^d_r\\		=&F(r,X^{0,\xi,\Theta,\overline{\Theta},v}_r,\mathbb{E}X^{0,\xi,\Theta,\overline{\Theta},v}_r,u^{0,\xi,\Theta,\overline{\Theta},v}_r,\mathbb{E}u^{0,\xi,\Theta,\overline{\Theta},v}_r).
	\end{aligned}
	\nonumber
\end{equation*}}
Then it follows from the dominated convergence theorem that, as $N\rightarrow\infty$,
{\footnotesize\begin{equation*}
	\begin{split}		&\frac{1}{N\tau}\mathbb{E}\sum_{k=0}^{N-1}\int_{0}^{\tau}F(r,X^{0,x,\Theta,\overline{\Theta},v}_{r+k\tau},\mathbb{E}X^{0,x,\Theta,\overline{\Theta},v}_{r+k\tau},u^{0,x,\Theta,\overline{\Theta},v}_{r+k\tau},\mathbb{E}u_{r+k\tau})\d r\\
		\rightarrow&\frac{1}{\tau}\mathbb{E}\int_0^\tau F(r,X^{0,\xi,\Theta,\overline{\Theta},v}_r,\mathbb{E}X^{0,\xi,\Theta,\overline{\Theta},v}_r,u^{0,\xi,\Theta,\overline{\Theta},v}_r,\mathbb{E}[u^{0,\xi,\Theta,\overline{\Theta},v}_r])\d r.
	\end{split}
\end{equation*}}
We put an end of the proof for Proposition \ref{transffered form by periodic measure} by taking $T\rightarrow\infty$ in (\ref{wz17}).
\end{proof}

The equivalent form of cost functional provides a straightforward way to investigate the ergodic control problem on one periodic interval.
\begin{proposition}\label{transffered form by ODE} Assume the same conditions as in Theorem \ref{transffered form by periodic measure}. Then for any $\Pi,P\in\mathscr{D}_\tau(\mathbb{S}^{n})$ and $\eta\in\mathscr{D}_\tau(\mathbb{R}^{n})$,
{\footnotesize\begin{equation*}
	\begin{split}
	&\mathcal{E}(x,u^{0,x,\Theta,\overline{\Theta},v})\\
	=&\frac{1}{\tau}\int_{0}^{\tau}\Big(\mathbb{E}\Big\langle (\dot{P}_t+Q^{\Theta,P}_t)(X^{0,\xi,\Theta,\overline{\Theta},v}_t-\mathbb{E}X^{0,\xi,\Theta,\overline{\Theta},v}_t),(X^{0,\xi,\Theta,\overline{\Theta},v}_t-\mathbb{E}X^{0,\xi,\Theta,\overline{\Theta},v}_t)\Big\rangle\\
	&\ \ +\Big\langle (\dot{\Pi}_t+Q^{\widehat{\Theta},\Pi,P}_t)\mathbb{E}X^{0,\xi,\Theta,\overline{\Theta},v}_t,\mathbb{E}X^{0,\xi,\Theta,\overline{\Theta},v}_t\Big\rangle+2\Big\langle\mathbb{E}X^{0,\xi,\Theta,\overline{\Theta},v}_t,K^{\widehat{\Theta},\Pi,P}_tv_t\Big\rangle\\ &\ \  +2\left\langle\mathbb{E}X^{0,\xi,\Theta,\overline{\Theta},v}_t,\dot{\eta}_t+L^{\widehat{\Theta},\Pi,P,\eta}_t\right\rangle+\left\langle \widehat{R}^P_tv_t,v_t\right\rangle+2\left\langle v_t,l^{P,\eta}_t\right\rangle+\left\langle P_t\sigma_t,\sigma_t\right\rangle+2\left\langle\eta_t,b_t\right\rangle\Big)\d t,
	\end{split}
\end{equation*}}
where
{\footnotesize\begin{equation*}\label{cost functional coefficients}
	\begin{split}
		Q^{\Theta,P}_t=&(A_t+B_t\Theta_t)^\top P_t+P_t(A_t+B_t\Theta_t)+Q_t+\Theta_t^\top R_t\Theta_t\\
		&+(C_t+D_t\Theta_t)^\top P_t(C_t+D_t\Theta_t)+S_t^\top\Theta_t+\Theta_t^\top S_t,\\
		Q^{\widehat{\Theta},\Pi,P}_t=&(\widehat{A}_t+\widehat{B}_t\widehat{\Theta}_t)^\top \Pi_t+\Pi_t(\widehat{A}_t+\widehat{B}_t\widehat{\Theta}_t)+\widehat{Q}_t+\widehat{\Theta}_t^\top\widehat{R}_t\widehat{\Theta}_t\\
		&+(\widehat{C}_t+\widehat{D}_t\widehat{\Theta}_t)^\top P_t(\widehat{C}_t+\widehat{D}_t\widehat{\Theta}_t)+\widehat{S}_t^\top\widehat{\Theta}_t+\widehat{\Theta}_t^\top\widehat{S}_t,\\
		K^{\widehat{\Theta},\Pi,P}_t=&(\widehat{C}_t+\widehat{D}_t\widehat{\Theta}_t)^\top P_t\widehat{D}_t+(\widehat{S}_t+\widehat{R}_t\widehat{\Theta}_t)^\top+\Pi_t\widehat{B}_t,\\ L^{\widehat{\Theta},\Pi,P,\eta}_t=&(\widehat{A}_t+\widehat{B}_t\widehat{\Theta}_t)^\top\eta_t+(\widehat{C}_t+\widehat{D}_t\widehat{\Theta}_t)^\top P_t\sigma_t+\Pi_tb_t+q_t+\widehat{\Theta}_t^\top\rho_t,\\
		\widehat{R}^P_t=&\widehat{R}_t+\widehat{D}_t^\top P_t\widehat{D}_t,\ \ \ \ \ l^{P,\eta}_t=\widehat{B}_t^\top\eta_t+\widehat{D}_t^\top P_t\sigma_t+\rho_t.
	\end{split}
\end{equation*}}
\end{proposition}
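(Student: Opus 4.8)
The plan is to derive the stated identity from the finite-horizon representation already obtained in Theorem~\ref{transffered form by periodic measure} by adding a suitably chosen null term built from $P,\Pi,\eta$. Throughout I abbreviate $X_t:=X^{0,\xi,\Theta,\overline{\Theta},v}_t$, $u_t:=u^{0,\xi,\Theta,\overline{\Theta},v}_t$, and set $Y_t:=\mathbb{E}X_t$, $Z_t:=X_t-Y_t$, so that $u_t=\Theta_tZ_t+\widehat{\Theta}_tY_t+v_t$ and $\mathbb{E}u_t=\widehat{\Theta}_tY_t+v_t$. As recorded in the proof of Proposition~\ref{closed-loop estimate}, $Y$ and $Z$ obey the decoupled dynamics
\begin{equation*}
\d Y_t=\big((\widehat{A}_t+\widehat{B}_t\widehat{\Theta}_t)Y_t+\widehat{B}_tv_t+b_t\big)\d t,
\end{equation*}
\begin{equation*}
\d Z_t=(A_t+B_t\Theta_t)Z_t\d t+\Sigma_t\d W_t,\quad \Sigma_t:=(C_t+D_t\Theta_t)Z_t+(\widehat{C}_t+\widehat{D}_t\widehat{\Theta}_t)Y_t+\widehat{D}_tv_t+\sigma_t.
\end{equation*}

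The conceptual heart of the argument is that, because $\xi$ is distributed as $\mu_0^{\Theta,\overline{\Theta},v}$, the law of $X_t$ is $\tau$-periodic: this is exactly the content of Proposition~\ref{periodic measure stochastic flow property} together with the relation $\mathbb{P}_{X_{t+\tau}}=\mathbb{P}_{X_t}$ established in its proof. Consequently $Y_t$ and the second moment $\mathbb{E}[Z_tZ_t^\top]$ are $\tau$-periodic, and since $P,\Pi\in\mathscr{D}_\tau(\mathbb{S}^n)$ and $\eta\in\mathscr{D}_\tau(\mathbb{R}^n)$ are themselves $\tau$-periodic, the boundary values at $0$ and $\tau$ coincide. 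Hence
\begin{equation*}
\int_0^\tau\frac{\d}{\d t}\Big(\mathbb{E}\langle P_tZ_t,Z_t\rangle+\langle\Pi_tY_t,Y_t\rangle+2\langle\eta_t,Y_t\rangle\Big)\d t=0.
\end{equation*}
I would then expand this vanishing integral by It\^o's formula, taking expectations to delete the martingale part; the $\mathscr{D}_\tau$-regularity of $P,\Pi,\eta$ provides the essentially bounded derivatives $\dot{P},\dot{\Pi},\dot{\eta}$ that legitimise the computation.

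Adding this zero to $\frac{1}{\tau}\mathbb{E}\int_0^\tau F\,\d t$ and regrouping is the main body of the proof. Substituting $X_t=Z_t+Y_t$ and the expressions for $u_t,\mathbb{E}u_t$ into $F$, and sorting the resulting terms by type, I expect the purely $Z$-quadratic contributions---the drift part $(A+B\Theta)^\top P+P(A+B\Theta)$, the It\^o correction $(C+D\Theta)^\top P(C+D\Theta)$ coming from $\mathbb{E}\langle P_t\Sigma_t,\Sigma_t\rangle$, together with $Q+\Theta^\top R\Theta+S^\top\Theta+\Theta^\top S$ from $F$---to assemble into $\dot{P}_t+Q^{\Theta,P}_t$; the $Y$-quadratic ones into $\dot{\Pi}_t+Q^{\widehat{\Theta},\Pi,P}_t$; the mixed $Y$-$v$ terms into $K^{\widehat{\Theta},\Pi,P}_t$; the $Y$-linear terms into $\dot{\eta}_t+L^{\widehat{\Theta},\Pi,P,\eta}_t$; the $v$-quadratic terms into $\widehat{R}^P_t$; the $v$-linear terms into $l^{P,\eta}_t$; and the remaining constants into $\langle P_t\sigma_t,\sigma_t\rangle+2\langle\eta_t,b_t\rangle$. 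The decisive simplification is that every term pairing the zero-mean vector $Z_t$ against a deterministic quantity vanishes in expectation since $\mathbb{E}Z_t=0$; in particular, in $\mathbb{E}\langle P_t\Sigma_t,\Sigma_t\rangle$ only the $Z$-$Z$ block and the fully deterministic block survive, the latter producing the $Y$-$\sigma$ cross term $(\widehat{C}+\widehat{D}\widehat{\Theta})^\top P\sigma$ inside $L^{\widehat{\Theta},\Pi,P,\eta}_t$.

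The step I expect to be most delicate is the bookkeeping of this regrouping: one must check that, after the $Z$-deterministic cross terms cancel, the It\^o drifts and the expansion of $F$ combine \emph{exactly} into the six listed coefficient functions with no residual term, keeping the $Z$-dynamics (governed by $A+B\Theta$, $C+D\Theta$ and $Q,S,R,\Theta$) cleanly separated from the $Y$-dynamics (governed by the hatted coefficients $\widehat{A},\widehat{B},\widehat{C},\widehat{D}$ and $\widehat{Q},\widehat{S},\widehat{R},\widehat{\Theta}$). No estimate beyond those already established is required; the result is in essence a stationary integration-by-parts identity, whose only genuinely non-routine ingredient is the $\tau$-periodicity of the law of $X_t$ that forces the boundary terms to disappear.
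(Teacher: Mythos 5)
Your proposal is correct and matches the paper's own proof in every essential respect: the paper likewise applies It\^o's formula to $\langle\Pi_tY_t,Y_t\rangle+\langle P_tZ_t,Z_t\rangle+2\langle\eta_t,Y_t\rangle$ under the stationary initialization $\mathbb{P}_\xi=\mu_0^{\Theta,\overline{\Theta},v}$, uses the $\tau$-periodicity of the law of $X$ together with that of $(\Pi,P,\eta)$ to get $\mathbb{E}\int_0^\tau\d\big(\langle\Pi_tY_t,Y_t\rangle+\langle P_tZ_t,Z_t\rangle+2\langle\eta_t,Y_t\rangle\big)=0$, and then regroups this null term with the one-period expression of $\mathcal{E}$ supplied by Theorem \ref{transffered form by periodic measure}, the $Z$-versus-deterministic cross terms dying because $\mathbb{E}Z_t=0$. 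The only difference is cosmetic: you attribute the vanishing boundary terms explicitly to Proposition \ref{periodic measure stochastic flow property}, which is if anything a more precise citation than the paper's.
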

\begin{proof}
To save space, we write $X^{0,\xi,\Theta,\overline{\Theta},v}_\cdot$ as $X_\cdot$ in this proof.
For $t\geq0$, denote
$Y_t=\mathbb{E}X_t$ and $Z_t= X_t-Y_t$.
For any $(\Pi,P,\eta)\in\mathscr{D}_\tau(\mathbb{S}^{n})\times\mathscr{D}_\tau(\mathbb{S}^{n})\times\mathscr{D}_\tau(\mathbb{R}^{n})$, applying It\^{o} formula to $\left\langle\Pi_\cdot Y_\cdot,Y_\cdot\right\rangle+\left\langle P_\cdot Z_\cdot,Z_\cdot\right\rangle+2\left\langle\eta_\cdot,Y_\cdot\right\rangle$, by Theorem \ref{transffered form by periodic measure} we have
{\footnotesize\begin{equation}\label{wz18}
	\begin{aligned}
		0
		=&\mathbb{E}\int_{0}^{\tau}\d\Big(\left\langle\Pi_tY_t,Y_t\right\rangle+\left\langle P_t Z_t,Z_t\right\rangle+2\left\langle\eta_t,Y_t\right\rangle\Big)\\
		=&\mathbb{E}\int_{0}^{\tau}\Big(\left\langle\dot{\Pi}_tY_t,Y_t\right\rangle+\left\langle\dot{P}_t Z_t,Z_t\right\rangle+2\left\langle\dot{\eta}_t,Y_t\right\rangle+\left\langle\Pi_t Y_t,(\widehat{A}_t+\widehat{B}_t\widehat{\Theta}_t)Y_t+\widehat{B}_tv_t+b_t\right\rangle\\		
&\ \ \ \ \ \ +\left\langle\Pi_t\left((\widehat{A}_t+\widehat{B}_t\widehat{\Theta}_t)Y_t+\widehat{B}_tv_t+b_t\right),Y_t\right\rangle+\left\langle P_tZ_t,(A_t+B_t\Theta_t)Z_t\right\rangle\\
		&\ \ \ \ \ \ +\left\langle P_t(A_t+B_t\Theta_t)Z_t,Z_t\right\rangle+\Big\langle P_t\Big((C_t+D_t\Theta_t)Z_t+(\widehat{C}_t+\widehat{D}_t\widehat{\Theta}_t)Y_t+\widehat{D}_tv_t+\sigma_t\Big),\\		&\ \ \ \ \ \ \ \ \ \ \ \ \ \ \ \ \ \ \ \ \ \ \ \ \ \ \ \ \ \ \ \ \ \ \ \ \ \ \ \ \ \ \ \ (C_t+D_t\Theta_t)Z_t+(\widehat{C}_t+\widehat{D}_t\widehat{\Theta}_t)Y_t+\widehat{D}_tv_t+\sigma_t\Big\rangle\\		&\ \ \ \ \ \ +2\left\langle\eta_t,(\widehat{A}_t+\widehat{B}_t\widehat{\Theta}_t)Y_t+\widehat{B}_tv_t+b_t\right\rangle\Big)\d t\\
		=&\int_{0}^{\tau}\Big(\mathbb{E}\Big\langle\Big(\dot{P}_t+(A_t+B_t\Theta_t)^\top P_t+P_t(A_t+B_t\Theta_t)+(C_t+D_t\Theta_t)^\top P_t(C_t+D_t\Theta_t)\\
		&\ \ \ \ \ \ \ \ \ \ \ +Q_t+\Theta_t^\top R_t\Theta_t+S_t^\top\Theta_t+\Theta_t^\top S_t\Big)Z_t,Z_t\Big\rangle\\
		&\ \ \ \ \ \ \ +\Big\langle\Big(\dot{\Pi}_t+((\widehat{A}_t+\widehat{B}_t\widehat{\Theta}_t)^\top \Pi_t+\Pi_t(\widehat{A}_t+\widehat{B}_t\widehat{\Theta}_t)+(\widehat{C}_t+\widehat{D}_t\widehat{\Theta}_t)^\top P_t(\widehat{C}_t+\widehat{D}_t\widehat{\Theta}_t)\\		
&\ \ \ \ \ \ \ \ \ \ \ +\widehat{Q}_t+\widehat{\Theta}_t^\top\widehat{R}_t\widehat{\Theta}_t
+\widehat{S}_t^\top\widehat{\Theta}_t+\widehat{\Theta}_t^\top\widehat{S}_t\Big)Y_t,Y_t\Big\rangle\\
		&\ \ \ \ \ \ \ +2\Big\langle Y_t,\Big((\widehat{C}_t+\widehat{D}_t\widehat{\Theta}_t)^\top P_t\widehat{D}_t+(\widehat{S}_t+\widehat{R}_t\widehat{\Theta}_t)^\top+\Pi_t\widehat{B}_t\Big)v_t\Big\rangle\\
		&\ \ \ \ \ \ \ +2\Big\langle Y_t,\dot{\eta}_t+(\widehat{A}_t+\widehat{B}_t\widehat{\Theta}_t)^\top\eta_t+(\widehat{C}_t+\widehat{D}_t\widehat{\Theta}_t)^\top P_t\sigma_t+\Pi_tb_t+q_t+\widehat{\Theta}_t^\top\rho_t\Big\rangle\\
		&\ \ \ \ \ \ \ +\Big\langle(\widehat{R}_t+\widehat{D}_t^\top P_t\widehat{D}_t)v_t,v_t\Big\rangle+2\Big\langle v_t,\widehat{B}_t^\top\eta_t+\widehat{D}_t^\top P_t\sigma_t+\rho_t\Big\rangle\\
		&\ \ \ \ \ \ \ +\Big\langle P_t\sigma_t,\sigma_t\Big\rangle+2\Big\langle\eta_t,b_t\Big\rangle\Big)\d t-\tau\mathcal{E}(x,u^{0,x,\Theta,\overline{\Theta},v}).
	\end{aligned}
\end{equation}}
Here the last equality holds since
{\footnotesize\begin{equation*}
	\begin{aligned}
		\tau\mathcal{E}(x,u^{0,x,\Theta,\overline{\Theta},v})
=&\mathbb{E}\int_0^\tau F(r,X_t,\mathbb{E}X_t,u_t,\mathbb{E}u_t)d t\\
=&\int_{0}^{\tau}\Big(\mathbb{E}\Big\langle(Q_t+S^\top\Theta_t+\Theta_t^\top S_t+\Theta_t^\top R_t\Theta_t)(X_t-\mathbb{E}X_t),(X_t-\mathbb{E}X_t)\Big\rangle\\
&\ \ \ \ \ \ \ +\Big\langle(\widehat{Q}_t+\widehat{S}_t^\top\widehat{\Theta}_t+\widehat{\Theta}_t^\top\widehat{S}_t+\widehat{\Theta}_t^\top\widehat{R}_t\widehat{\Theta}_t)\mathbb{E}X_t,\mathbb{E}X_t\Big\rangle+2\Big\langle(\widehat{S}_t+\widehat{R}_t\widehat{\Theta}_t)\mathbb{E}X_t ,v_t\Big\rangle\\
&\ \ \ \ \ \ \ +2\left\langle\mathbb{E}X_t,q_t+\widehat{\Theta}_t^\top\rho_t\right\rangle+\left\langle\widehat{R}_tv_t,v_t\right\rangle+2\left\langle\rho_t,v_t\right\rangle\Big)\d t.
\end{aligned}
\end{equation*}}
By dividing by $t$ on both sides of (\ref{wz18}) we arrive at the conclusion.
\end{proof}

Besides, referring to \cite{ref30} we have the well-posednesses of periodic solutions to Riccati equations.
\begin{proposition}\label{periodic Riccati equation} \cite[Propostion 5.3]{ref30}  Assume {\rm$\textbf{(A1)}$}--{\rm$\textbf{(A3)}$}. Then the following Riccati equation
\begin{equation}\label{wz21}
	\begin{aligned}
	&\dot{P}_t+Q_t+A_t^\top P_t+P_tA_t+C_t^\top P_tC_t\\
	&-(B_t^\top P_t+D_t^\top P_tC_t+S_t)^\top(R_t+D_t^\top P_tD_t)^{-1}(B_t^\top P_t+D_t^\top P_tC_t+S_t)=0
	\end{aligned}
\end{equation}
has a unique uniformly positive definite solution $P\in \mathscr{D}_\tau(\mathbb{S}^n)$. Moreover,
\begin{equation*}
	\begin{aligned}
		-(R_t+D_t^\top P_tD_t)^{-1}(B_t^\top P_t+D_t^\top P_tC_t+S_t)
	\end{aligned}
\end{equation*}
is a $\tau$-periodic stabilizer of $[A,C;B,D]$.
\end{proposition}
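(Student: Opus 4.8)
The plan is to realize the periodic solution as the infinite-horizon limit of finite-horizon Riccati solutions, using that \eqref{wz21} is precisely the Riccati equation of the (deviation, non-mean-field) LQ problem with dynamics $\d X_s=(A_sX_s+B_su_s)\d s+(C_sX_s+D_su_s)\d W_s$, $X_t=x$, and running cost $\langle Q_sX_s,X_s\rangle+2\langle S_sX_s,u_s\rangle+\langle R_su_s,u_s\rangle$. First I would fix $T>0$ and solve \eqref{wz21} backward on $[0,T]$ with terminal value $P^T_T=0$. Local solvability is immediate from Cauchy--Lipschitz wherever $R_t+D_t^\top P_tD_t$ is invertible, and this invertibility persists because $P^T_t\geq0$: indeed $P^T_t$ is the value function of a problem whose running cost is nonnegative, since by \textbf{(A2)} one has $\langle Q_tX,X\rangle+2\langle S_tX,u\rangle+\langle R_tu,u\rangle=\langle(Q_t-S_t^\top R_t^{-1}S_t)X,X\rangle+\langle R_t(u+R_t^{-1}S_tX),u+R_t^{-1}S_tX\rangle\geq0$, together with $R_t\geq\alpha I_n$.

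Global existence and a convergent limit rest on a uniform upper bound, while uniform positive definiteness will come from a lower bound; both are the heart of the matter. For the upper bound I would insert the $\tau$-periodic stabilizer $\Theta^0\in\mathscr{S}_\tau[A,C;B,D]$ supplied by \textbf{(A3)}: by Lemma \ref{wz5} the closed-loop state decays like $Me^{-\lambda(s-t)}|x|^2$ in mean square, so the associated cost, and hence $\langle P^T_tx,x\rangle$, is at most $K|x|^2$ uniformly in $T$, giving $0\leq P^T_t\leq KI_n$. Nonnegativity of the running cost also makes $T\mapsto P^T_t$ nondecreasing, so the uniform upper bound yields pointwise monotone convergence $P^T_t\uparrow P_t$; passing to the limit in the integrated form of \eqref{wz21} by dominated convergence shows $P$ solves \eqref{wz21}, and boundedness of $P$ and of the coefficients then forces $\dot P$ to be bounded. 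Periodicity is cheap: by $\tau$-periodicity of the coefficients $P^{T+\tau}_{t+\tau}=P^T_t$, and letting $T\to\infty$ gives $P_{t+\tau}=P_t$, whence $P\in\mathscr{D}_\tau(\mathbb{S}^n)$. For the lower bound I would pass to $\tilde u_s=u_s+R_s^{-1}S_sX_s$, turning the running cost into $\langle(Q_s-S_s^\top R_s^{-1}S_s)X_s,X_s\rangle+\langle R_s\tilde u_s,\tilde u_s\rangle\geq\alpha(|X_s|^2+|\tilde u_s|^2)$, and run a short-time estimate: over a window $[t,t+\delta]$ the state cannot leave a neighborhood of $x$ unless $\int_t^{t+\delta}(|X_s|^2+|\tilde u_s|^2)\d s$ is already of order $|x|^2$, so $\langle P_tx,x\rangle\geq c|x|^2$, i.e. $P_t\geq cI_n$.

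It remains to verify the stabilizer claim and uniqueness. Writing $\Lambda_t=B_t^\top P_t+D_t^\top P_tC_t+S_t$, $\mathcal R_t=R_t+D_t^\top P_tD_t$ and $\Theta^*_t=-\mathcal R_t^{-1}\Lambda_t$ (which lies in $\mathscr{B}_\tau$ since $P\in\mathscr{D}_\tau$ and $\mathcal R_t\geq R_t\geq\alpha I_n$ is uniformly invertible), I would apply It\^o's formula to $\langle P_t\Phi_t,\Phi_t\rangle$ along the closed-loop flow \eqref{homogeneous system} with $\Theta=\Theta^*$ and substitute \eqref{wz21}; the drift collapses to
\[
-\big\langle\big(Q_t-S_t^\top R_t^{-1}S_t+(\Theta^*_t+R_t^{-1}S_t)^\top R_t(\Theta^*_t+R_t^{-1}S_t)\big)\Phi_t,\Phi_t\big\rangle\leq-\alpha|\Phi_t|^2
\]
by \textbf{(A2)}. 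Combined with $cI_n\leq P_t\leq KI_n$ this gives $\tfrac{\d}{\d t}\mathbb{E}\langle P_t\Phi_t,\Phi_t\rangle\leq-\tfrac{\alpha}{K}\mathbb{E}\langle P_t\Phi_t,\Phi_t\rangle$, and Gronwall yields $\mathbb{E}|\Phi_t|^2\leq\tfrac{K}{c}e^{-(\alpha/K)t}$, so $\Theta^*\in\mathscr{S}_\tau[A,C;B,D]$. For uniqueness I would run the same It\^o computation for an arbitrary stabilizing control, obtaining the completion-of-squares identity $\mathbb{E}\int_t^\infty(\text{running cost})\,\d s=\langle P_tx,x\rangle+\mathbb{E}\int_t^\infty\langle\mathcal R_s(u_s-\Theta^*_sX_s),u_s-\Theta^*_sX_s\rangle\d s$, which identifies $\langle P_tx,x\rangle$ with the data-determined infinite-horizon value function; hence any two uniformly positive definite periodic solutions coincide. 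The main obstacle is securing the two-sided a priori bound uniformly in both $T$ and $t$---particularly the lower bound, whose short-time argument is the only genuinely delicate estimate, since the limit, the Lyapunov decay, and uniqueness all rest on it.
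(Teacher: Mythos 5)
The first thing to note is that the paper does not prove Proposition \ref{periodic Riccati equation} at all: it is imported verbatim from \cite[Proposition 5.3]{ref30}, so there is no internal argument to compare yours against, and your blind reconstruction should be judged on its own merits. On those merits it is correct, and it follows what is essentially the standard (Sun--Yong) route: the finite-horizon Riccati solutions $P^T$ with zero terminal data are nonnegative via the value-function interpretation that \textbf{(A2)} makes available, nondecreasing in $T$ by nonnegativity of the running cost, and uniformly bounded above because the \textbf{(A3)}-stabilizer combined with Lemma \ref{wz5} supplies an admissible control whose cost is at most $K|x|^2$; monotone convergence, the integrated form of \eqref{wz21}, and uniqueness of backward solutions under the time shift $t\mapsto t+\tau$ then give a bounded periodic solution with essentially bounded derivative. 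I checked your completion-of-squares drift computation for the closed-loop Lyapunov function: the drift does collapse to $-\langle(Q_t-S_t^\top R_t^{-1}S_t+(\Theta^*_t+R_t^{-1}S_t)^\top R_t(\Theta^*_t+R_t^{-1}S_t))\Phi_t,\Phi_t\rangle$, so with $cI_n\le P_t\le KI_n$ Gronwall gives exponential decay and the stabilizer claim; the same identity run on the infinite horizon identifies $\langle P_tx,x\rangle$ with the value over stabilizing controls, a quantity determined by the data alone, which indeed yields uniqueness within the class of uniformly positive definite solutions in $\mathscr{D}_\tau(\mathbb{S}^n)$ (each such solution generates its own stabilizing feedback by your Lyapunov step, so the identity applies to both).

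Two spots deserve more care than your sketch gives them, though neither is a gap in the idea. First, global existence of $P^T$ on all of $[0,T]$ has a mild chicken-and-egg structure: nonnegativity, the upper bound, and invertibility of $R_t+D_t^\top P^T_tD_t$ must be propagated together on the maximal existence interval to rule out blow-up before reaching time $0$; you have all the ingredients but should say explicitly that the two-sided bound holds on that maximal interval. Second, the short-time lower bound should be written out: if $\mathbb{E}\int_t^{t+\delta}(|X_s|^2+|\tilde u_s|^2)\,\d s\le\eps|x|^2$, then the Lipschitz dynamics force $\mathbb{E}|X_s-x|^2\le K(1+\delta)\eps|x|^2$ on $[t,t+\delta]$, hence $\mathbb{E}\int_t^{t+\delta}|X_s|^2\,\d s\ge\tfrac{\delta}{4}|x|^2$, a contradiction for $\eps$ small; this gives $\langle P_tx,x\rangle\ge c|x|^2$ with $c$ uniform in $t$ by the uniform coefficient bounds. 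You correctly flag this as the delicate step, and it does close.
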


Notice that by \textbf{(A2)} there exists $\alpha>0$ such that for $t\ge0$,
\begin{equation*}
	\begin{aligned}
		\widehat{Q}_t-\widehat{S}_t^\top\widehat{R}_t^{-1}\widehat{S}_t\ge\alpha I_n.
	\end{aligned}
	\nonumber
\end{equation*}
Hence
{\footnotesize\begin{equation*}
	\begin{aligned}
		&\ \ \ \ \widehat{Q}_t+\widehat{C}_t^\top P_t\widehat{C}_t-(\widehat{D}_t^\top P_t\widehat{C}_t+\widehat{S}_t)^\top(\widehat{R}_t+\widehat{D}_t^\top P_t\widehat{D}_t)^{-1}(\widehat{D}_t^\top P_t\widehat{C}_t+\widehat{S}_t)\\	&=\widehat{Q}_t-\widehat{S}_t^\top\widehat{R}_t^{-1}\widehat{S}_t\\
&\ \ +\Big(\widehat{C}_t-\widehat{D}_t(\widehat{R}_t+\widehat{D}_t^\top P_t\widehat{D}_t)^{-1}(\widehat{D}_t^\top P_t\widehat{C}_t+\widehat{S}_t)\Big)^\top P_t\Big(\widehat{C}_t-\widehat{D}_t(\widehat{R}_t+\widehat{D}_t^\top P_t\widehat{D}_t)^{-1}(\widehat{D}_t^\top P_t\widehat{C}_t+\widehat{S}_t)\Big)\\
&\ +\Big(\widehat{S}_t-\widehat{R}_t(\widehat{R}_t+\widehat{D}_t^\top P_t\widehat{D}_t)^{-1}(\widehat{D}_t^\top P_t\widehat{C}_t+\widehat{S}_t)\Big)^\top\widehat{R}_t^{-1}\Big(\widehat{S}_t-\widehat{R}_t(\widehat{R}_t+\widehat{D}_t^\top P_t\widehat{D}_t)^{-1}(\widehat{D}_t^\top P_t\widehat{C}_t+\widehat{S}_t)\Big)\\
		&\ge\alpha I_n.
	\end{aligned}
\end{equation*}}
An immediate corollary of Proposition \ref{periodic Riccati equation} gives the well-posedness of periodic solution to the other Riccati equation relevant to the mean-field term.
\begin{corollary}\label{periodic Riccati equation corollary} Assume {\rm$\textbf{(A1)}$}--{\rm$\textbf{(A3)}$}. Then the following Riccati equation
\begin{equation}\label{wz22}
	\begin{aligned}
	&\dot{\Pi}_t+\widehat{Q}_t+\widehat{A}_t^\top\Pi_t+\Pi_t\widehat{A}_t+\widehat{C}_t^\top P_t\widehat{C}_t\\
	&-(\widehat{B}_t^\top\Pi_t+\widehat{D}_t^\top P_t\widehat{C}_t+\widehat{S}_t)^\top(\widehat{R}_t+\widehat{D}_t^\top P_t\widehat{D}_t)^{-1}(\widehat{B}_t^\top\Pi_t+\widehat{D}_t^\top P_t\widehat{C}_t+\widehat{S}_t)=0
	\end{aligned}
\end{equation}
has a unique uniformly positive definite solution $\Pi\in\mathscr{D}_\tau(\mathbb{S}^n)$. Moreover,
\begin{equation}\label{wz19}
	\begin{aligned}
		-(\widehat{R}_t+\widehat{D}_t^\top P_t\widehat{D}_t)^{-1}(\widehat{B}_t^\top\Pi_t+\widehat{D}_t^\top P_t\widehat{C}_t+\widehat{S}_t)
	\end{aligned}
\end{equation}
 is a $\tau$-periodic stabilizer of $[\widehat{A},0;\widehat{B},0]$.
\end{corollary}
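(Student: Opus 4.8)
The plan is to recognize that the Riccati equation (\ref{wz22}) is exactly equation (\ref{wz21}) written for the modified four-tuple $[\widehat{A},0;\widehat{B},0]$ with suitably frozen weighting coefficients, and then to invoke Proposition \ref{periodic Riccati equation} (which, being Proposition 5.3 in \cite{ref30}, is a general statement about such periodic Riccati equations) with these substituted data. To this end I would first set, for $t\ge0$,
\begin{equation*}
\widetilde{Q}_t=\widehat{Q}_t+\widehat{C}_t^\top P_t\widehat{C}_t,\quad \widetilde{S}_t=\widehat{D}_t^\top P_t\widehat{C}_t+\widehat{S}_t,\quad \widetilde{R}_t=\widehat{R}_t+\widehat{D}_t^\top P_t\widehat{D}_t,
\end{equation*}
where $P\in\mathscr{D}_\tau(\mathbb{S}^n)$ is the unique periodic solution of (\ref{wz21}) furnished by Proposition \ref{periodic Riccati equation}, here treated as fixed data. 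Substituting $A\mapsto\widehat{A}$, $C\mapsto0$, $B\mapsto\widehat{B}$, $D\mapsto0$ and $(Q,S,R)\mapsto(\widetilde{Q},\widetilde{S},\widetilde{R})$ into (\ref{wz21}) kills every term in which the unknown couples to the diffusion coefficients, so the abstract equation collapses to $\dot{\Pi}_t+\widetilde{Q}_t+\widehat{A}_t^\top\Pi_t+\Pi_t\widehat{A}_t-(\widehat{B}_t^\top\Pi_t+\widetilde{S}_t)^\top\widetilde{R}_t^{-1}(\widehat{B}_t^\top\Pi_t+\widetilde{S}_t)=0$, which reproduces (\ref{wz22}) term by term once $\widetilde{Q},\widetilde{S},\widetilde{R}$ are expanded.

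Next I would check that the modified data meet the hypotheses of Proposition \ref{periodic Riccati equation}. Since $P\in\mathscr{D}_\tau(\mathbb{S}^n)$ is $\tau$-periodic and essentially bounded and $\widehat{Q},\widehat{C},\widehat{D},\widehat{S},\widehat{R}\in\mathscr{B}_\tau$ by \textbf{(A1)}, sums and products give $\widetilde{Q},\widetilde{S},\widetilde{R}\in\mathscr{B}_\tau$, so the periodic-boundedness hypothesis holds. For the positivity required by the analogue of \textbf{(A2)}: because $P$ is uniformly positive definite, in particular $P_t\ge0$, so $\widehat{D}_t^\top P_t\widehat{D}_t\ge0$ and hence $\widetilde{R}_t=\widehat{R}_t+\widehat{D}_t^\top P_t\widehat{D}_t$ is uniformly positive definite by uniform positivity of $\widehat{R}$; moreover $\widetilde{Q}_t-\widetilde{S}_t^\top\widetilde{R}_t^{-1}\widetilde{S}_t$ is precisely the expression shown to be bounded below by $\alpha I_n$ in the completion-of-squares identity displayed just before the corollary, so it too is uniformly positive definite. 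Finally, the stabilizability hypothesis is already supplied by \textbf{(A3)}, which asserts that $[\widehat{A},0;\widehat{B},0]$ is $\tau$-periodic mean-square exponentially stabilizable.

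With all hypotheses verified, Proposition \ref{periodic Riccati equation} applied to $[\widehat{A},0;\widehat{B},0]$ with weights $(\widetilde{Q},\widetilde{S},\widetilde{R})$ delivers at once a unique uniformly positive definite solution $\Pi\in\mathscr{D}_\tau(\mathbb{S}^n)$ of (\ref{wz22}), together with the conclusion that
\begin{equation*}
-\widetilde{R}_t^{-1}(\widehat{B}_t^\top\Pi_t+\widetilde{S}_t)=-(\widehat{R}_t+\widehat{D}_t^\top P_t\widehat{D}_t)^{-1}(\widehat{B}_t^\top\Pi_t+\widehat{D}_t^\top P_t\widehat{C}_t+\widehat{S}_t)
\end{equation*}
is a $\tau$-periodic stabilizer of $[\widehat{A},0;\widehat{B},0]$, which is exactly (\ref{wz19}). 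The only genuinely nonroutine point in this argument is the uniform positivity of $\widetilde{Q}-\widetilde{S}^\top\widetilde{R}^{-1}\widetilde{S}$, and this has already been dispatched by the displayed identity preceding the corollary, which reduces it through completion of squares to the bound $\widehat{Q}_t-\widehat{S}_t^\top\widehat{R}_t^{-1}\widehat{S}_t\ge\alpha I_n$ guaranteed by \textbf{(A2)}; consequently the corollary is indeed immediate.
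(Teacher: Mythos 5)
Your proposal is correct and follows exactly the paper's route: the paper establishes the corollary by the completion-of-squares identity displayed just before it (showing $\widehat{Q}_t+\widehat{C}_t^\top P_t\widehat{C}_t-(\widehat{D}_t^\top P_t\widehat{C}_t+\widehat{S}_t)^\top(\widehat{R}_t+\widehat{D}_t^\top P_t\widehat{D}_t)^{-1}(\widehat{D}_t^\top P_t\widehat{C}_t+\widehat{S}_t)\ge\alpha I_n$) and then invokes Proposition \ref{periodic Riccati equation} for the data $[\widehat{A},0;\widehat{B},0]$ with the frozen weights, which is precisely your substitution $(\widetilde{Q},\widetilde{S},\widetilde{R})$. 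Your write-up merely makes explicit the hypothesis verification that the paper leaves as ``immediate,'' so there is nothing to correct.
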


We also have the well-possedness of periodic solution to an ODE relevant to non-homogeneous terms in the state equation and $1$-order terms in the cost functional.
\begin{proposition}\label{wz20} Assume {\rm$\textbf{(A1)}$}--{\rm$\textbf{(A3)}$}. Then the following ODE
\begin{equation}\label{periodic one-order ODE}
	\begin{aligned}
    &\dot{\eta}_t+(\widehat{A}_t-\widehat{B}_t(\widehat{R}_t+\widehat{D}_t^\top P_t\widehat{D}_t)^{-1}(\widehat{B}_t^\top\Pi_t+\widehat{D}_t^\top P_t\widehat{C}_t+\widehat{S}_t))^\top\eta_t\\
    &+(\widehat{C}_t-\widehat{D}_t(\widehat{R}_t+\widehat{D}_t^\top P_t\widehat{D}_t)^{-1}(\widehat{B}_t^\top\Pi_t+\widehat{D}_t^\top P_t\widehat{C}_t+\widehat{S}_t))^\top P_t\sigma_t\\
    &+\Pi_tb_t+q_t-(\widehat{B}_t^\top\Pi_t+\widehat{D}_t^\top P_t\widehat{C}_t+\widehat{S}_t)^\top(\widehat{R}_t+\widehat{D}_t^\top P_t\widehat{D}_t)^{-1}\rho_t=0
    \end{aligned}
\end{equation}
has a unique uniformly positive definite solution $\eta\in\mathscr{D}_\tau(\mathbb{R}^n)$.
\end{proposition}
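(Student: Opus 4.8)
The plan is to observe that, after substituting the stabilizer
$$\widehat{\Theta}^*_t=-(\widehat{R}_t+\widehat{D}_t^\top P_t\widehat{D}_t)^{-1}(\widehat{B}_t^\top\Pi_t+\widehat{D}_t^\top P_t\widehat{C}_t+\widehat{S}_t)$$
furnished by \eqref{wz19} in Corollary \ref{periodic Riccati equation corollary}, the ODE \eqref{periodic one-order ODE} reduces to the linear adjoint form
$$\dot\eta_t+(\widehat{A}_t+\widehat{B}_t\widehat{\Theta}^*_t)^\top\eta_t+f_t=0,\qquad f_t:=(\widehat{C}_t+\widehat{D}_t\widehat{\Theta}^*_t)^\top P_t\sigma_t+\Pi_tb_t+q_t+(\widehat{\Theta}^*_t)^\top\rho_t.$$
To see this one uses that $(\widehat{R}_t+\widehat{D}_t^\top P_t\widehat{D}_t)^{-1}$ is symmetric, so that the coefficient of $\eta_t$ becomes exactly $(\widehat{A}_t+\widehat{B}_t\widehat{\Theta}^*_t)^\top$ and the final term collapses to $(\widehat{\Theta}^*_t)^\top\rho_t$. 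By \textbf{(A1)}, Proposition \ref{periodic Riccati equation}, Corollary \ref{periodic Riccati equation corollary} and the definition of $\mathscr{S}_\tau$, every factor appearing here lies in $\mathscr{B}_\tau$, so $f\in\mathscr{B}_\tau(\mathbb{R}^n)$.

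I would then exploit that the tuple $[\widehat{A},0;\widehat{B},0]$ has vanishing diffusion coefficients: the homogeneous closed-loop system of Definition \ref{wz8} loses its $\d W$ term, so its fundamental matrix $\Psi$, solving $\dot\Psi_t=(\widehat{A}_t+\widehat{B}_t\widehat{\Theta}^*_t)\Psi_t$ with $\Psi_0=I_n$, is deterministic, and Lemma \ref{wz5} specializes to $|\Psi_t\Psi_s^{-1}|^2\le Me^{-\lambda(t-s)}$ for all $0\le s\le t$. With this decay in hand I define the candidate
$$\eta_t=\int_t^\infty\big(\Psi_s\Psi_t^{-1}\big)^\top f_s\,\d s=\Psi_t^{-\top}\int_t^\infty\Psi_s^\top f_s\,\d s,$$
whose convergence follows from the exponential bound on $|\Psi_s\Psi_t^{-1}|$ together with the boundedness of $f$.

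The verification splits into three routine checks. Differentiating the second expression for $\eta_t$ and using $\tfrac{\d}{\d t}\Psi_t^{-\top}=-(\widehat{A}_t+\widehat{B}_t\widehat{\Theta}^*_t)^\top\Psi_t^{-\top}$ recovers the displayed ODE. Periodicity follows from the Floquet identity $\Psi_{t+\tau}=\Psi_t\Psi_\tau$ (immediate from uniqueness and the $\tau$-periodicity of the coefficient matrix): substituting $s\mapsto s+\tau$ and using $f_{s+\tau}=f_s$ gives $\Psi_{s+\tau}\Psi_{t+\tau}^{-1}=\Psi_s\Psi_t^{-1}$, hence $\eta_{t+\tau}=\eta_t$. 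Finally, since $\eta$ is bounded while both $\widehat{A}+\widehat{B}\widehat{\Theta}^*$ and $f$ lie in $\mathscr{B}_\tau$, the equation itself yields $\dot\eta\in\mathscr{B}_\tau$, so $\eta\in\mathscr{D}_\tau(\mathbb{R}^n)$. (The qualifier ``uniformly positive definite'' in the statement should be read as a slip, $\eta$ being vector-valued; what is actually asserted is $\eta\in\mathscr{D}_\tau(\mathbb{R}^n)$.)

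For uniqueness, the difference $\delta$ of two periodic solutions satisfies the homogeneous equation, so $\delta_t=\Psi_t^{-\top}\delta_0$, and periodicity forces $\Psi_\tau^{-\top}\delta_0=\delta_0$. Since $|\Psi_\tau^k|^2=|\Psi_{k\tau}|^2\le Me^{-\lambda k\tau}\to0$, the monodromy $\Psi_\tau$ has spectral radius below $1$, so $\Psi_\tau^{-\top}$ admits no eigenvalue equal to $1$; thus $\delta_0=0$ and $\delta\equiv0$. The only step requiring genuine care is the opening algebraic reduction --- confirming that \eqref{periodic one-order ODE} is precisely the adjoint equation for the stabilized generator $\widehat{A}+\widehat{B}\widehat{\Theta}^*$ --- after which every analytic estimate is a direct consequence of the deterministic specialization of Lemma \ref{wz5}.
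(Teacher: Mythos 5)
Your proof is correct, and it reaches the conclusion by a genuinely different construction than the paper. The paper works on a single period: it solves the terminal value problem on $[0,\tau]$ with an unknown terminal datum $h$, proves that $I-\psi_\tau$ is invertible by showing the Neumann-type sum $\sum_{i}\psi_{i\tau}$ converges (its limit $M$ satisfying $M(I-\psi_\tau)=I$), then picks $h=\big((I-\psi_\tau)^{-1}\big)^\top l$ so that $\eta_0=\eta_\tau$, and finally extends periodically by uniqueness of the initial value problem on $[0,\infty)$; uniqueness of the periodic solution follows from the same invertibility. You instead write the closed-form stable integral
\begin{equation*}
\eta_t=\int_t^\infty\big(\Psi_s\Psi_t^{-1}\big)^\top f_s\,\d s,
\end{equation*}
whose convergence, ODE property, and $\tau$-periodicity (via the Floquet identity $\Psi_{t+\tau}=\Psi_t\Psi_\tau$, which the paper also establishes as $\psi_{\cdot+\tau}\psi_\tau^{-1}=\psi_\cdot$) are direct checks, and you get uniqueness from the spectral radius of the monodromy matrix rather than from an explicitly constructed inverse. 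The two arguments are morally equivalent --- splitting your improper integral into period-length blocks and applying Floquet reproduces exactly the paper's geometric series $\sum_i\psi_{i\tau}$ --- but your route buys an explicit formula and a shorter verification, while the paper's route stays on a compact interval and makes the fixed point of the period map visible. Your opening algebraic reduction (using symmetry of $\widehat{R}_t+\widehat{D}_t^\top P_t\widehat{D}_t$ to identify \eqref{periodic one-order ODE} with the adjoint equation for the generator stabilized by \eqref{wz19}) is sound and matches what the paper does implicitly when it writes the homogeneous system \eqref{one-order linear homogeneous system}; and you are right that ``uniformly positive definite'' in the statement is a slip, as $\eta$ is $\mathbb{R}^n$-valued, so the assertion is simply unique solvability in $\mathscr{D}_\tau(\mathbb{R}^n)$.
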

\begin{proof}
Consider a linear ODE with a terminal time $\tau$
\begin{equation*}
	\left\{\begin{aligned}
		\d\eta_t=&-\Big((\widehat{A}_t-\widehat{B}_t(\widehat{R}_t+\widehat{D}_t^\top P_t\widehat{D}_t)^{-1}(\widehat{B}_t^\top\Pi_t+\widehat{D}_t^\top P_t\widehat{C}_t+\widehat{S}_t))^\top\eta_t\\
		&\ \ \ \ +(\widehat{C}_t-\widehat{D}_t(\widehat{R}_t+\widehat{D}_t^\top P_t\widehat{D}_t)^{-1}(\widehat{B}_t^\top\Pi_t+\widehat{D}_t^\top P_t\widehat{C}_t+\widehat{S}_t))^\top P_t\sigma_t+\Pi_tb_t+q_t\\
		&\ \ \ \ -(\widehat{B}_t^\top\Pi_t+\widehat{D}_t^\top P_t\widehat{C}_t+\widehat{S}_t)^\top(\widehat{R}_t+\widehat{D}_t^\top P_t\widehat{D}_t)^{-1}\rho_t\Big)\d t,\ \ \ t\in[0,\tau],\\
		{\eta}_\tau=&h,
	\end{aligned}\right.
	\nonumber
\end{equation*}
and a linear ODE with initial time $0$
\begin{equation}\label{one-order linear homogeneous system}
	\left\{\begin{aligned}
		\d\psi_t=&\Big(\widehat{A}_t-\widehat{B}_t(\widehat{R}_t+\widehat{D}_t^\top P_t\widehat{D}_t)^{-1}(\widehat{B}_t^\top\Pi_t+\widehat{D}_t^\top P_t\widehat{C}_t+\widehat{S}_t)\Big)\psi_t\d t,\ \ \ t\in[0,\infty),\\
		\psi_0=&I.\\
	\end{aligned}\right.
\end{equation}
By Corollary \ref{periodic Riccati equation corollary}, we know that (\ref{wz19}) is a $\tau$-periodic stabilizer of $[\widehat{A},0;\widehat{B},0]$,
so ODE (\ref{one-order linear homogeneous system}) has a unique solution $\psi\in L^{2}([0,\infty);\mathbb{R}^{n\times n})$ and there exist $M,\lambda>0$ such that for any $t\ge 0$, $|\psi_t|^2\le Me^{-\lambda t}$.

Then a calculus of $\int_t^\tau\d(\psi^\top_s\eta_s)$ leads to
{\small\begin{equation}\label{solution of linear ODE}
	\begin{split}
	\eta_t=&\Big(\psi_\tau\psi_t^{-1}\Big)^\top h\\	&+\int_t^\tau\Big(\psi_s\psi_t^{-1}\Big)^\top\Big((\widehat{C}_s-\widehat{D}_s(\widehat{R}_s+\widehat{D}_s^\top P_s\widehat{D}_s)^{-1}(\widehat{B}_s^\top\Pi_s+\widehat{D}_s^\top P_s\widehat{C}_s+\widehat{S}_s))^\top P_s\sigma_s\\
	&\ \ \ \ \ \ \ \ \ \ \ \ \ \ \ \ \ \ \ \ \ \ \ \ +\Pi_sb_s+\widehat{q}_s-(\widehat{B}_s^\top\Pi_s+\widehat{D}_s^\top P_s\widehat{C}_s+\widehat{S}_s)^\top(\widehat{R}_s+\widehat{D}_s^\top P_s\widehat{D}_s)^{-1}\rho_s\Big)\d s.
\end{split}
\end{equation}}

Consider a matrix sequence $\big\{M_k\big\}_{k\in\mathbb{N}}\subset\mathbb{R}^{n\times n}$, where for $k\in\mathbb{N}$, $M_k=\sum\limits_{i=0}^k\psi_{i\tau}$.
Note that for any $0\le k_1<k_2$,
\begin{equation*}
	\begin{aligned}		\big|M_{k_2}-M_{k_1}\big|^2=&\text{tr}\Big[\sum_{i=k_1+1}^{k_2}\psi_{i\tau}^\top\sum_{i=k_1+1}^{k_2}\psi_{i\tau}\Big]\le\sum_{i=k_1+1}^{k_2}\text{tr}\Big[e^{\frac{\lambda i\tau}{2}}\psi_{i\tau}^\top\psi_{i\tau}\Big]\sum_{i=k_1+1}^{k_2}e^{-\frac{\lambda i\tau}{2}}\\
		\le&K\sum_{i=k_1+1}^{k_2}e^{-\frac{\lambda i\tau}{2}}\sum_{i=k_1+1}^{k_2}e^{-\frac{\lambda i\tau}{2}}\le Ke^{-\lambda(k_1+1)\tau}(1-e^{-\frac{\lambda\tau}{2}})^{-2}.
	\end{aligned}
\end{equation*}
Thus $\big\{M_k\big\}_{k\in\mathbb{N}}$ is a Cauchy sequence in $\mathbb{R}^{n\times n}$, and there is $M\in\mathbb{R}^{n\times n}$ such that $\lim\limits_{k\rightarrow\infty}|M_k-M|=0$. Using the fact that (\ref{wz19})
is a $\tau$-periodic stabilizer of $[\widehat{A},0;\widehat{B},0]$ again we know that both $\psi_{\cdot+\tau}\psi_\tau^{-1}$ and $\psi_{\cdot}$ satisfy \eqref{one-order linear homogeneous system} which only has a unique solution. So $\psi_{\cdot+\tau}\psi_\tau^{-1}=\psi_{\cdot}$ and we have
\begin{equation}\label{sum flow property}
	\begin{split}		M_k(I-\psi_\tau)&=\sum_{i=0}^k\psi_{i\tau}(I-\psi_\tau)=\sum_{i=0}^k\psi_{i\tau}-\sum_{i=1}^{k+1}\psi_{i\tau}=I-\psi_{(k+1)\tau}.
	\end{split}
\end{equation}
Recalling that
$|\psi_{(k+1)\tau}|^2\le Ke^{-\lambda (k+1)\tau}$, as $k\rightarrow\infty$ in \eqref{sum flow property}, we have $M(I-\psi_\tau)=I$. Hence $(I-\psi_\tau)$ is an invertible matrix.

Set
\begin{equation*}
	\begin{aligned}
		l=\int_0^\tau\psi_s^\top\Big(&(\widehat{C}_s-\widehat{D}_s(\widehat{R}_s+\widehat{D}_s^\top P_s\widehat{D}_s)^{-1}(\widehat{B}_s^\top\Pi_s+\widehat{D}_s^\top P_s\widehat{C}_s+\widehat{S}_s))^\top P_s\sigma_s\\
&+\Pi_sb_s+\widehat{q}_s-(\widehat{B}_s^\top\Pi_s+\widehat{D}_s^\top P_s\widehat{C}_s+\widehat{S}_s)^\top(\widehat{R}_s+\widehat{D}_s^\top P_s\widehat{D}_s)^{-1}\rho_s\Big)\d s.
	\end{aligned}
\end{equation*}
We take
\begin{equation*}
	\begin{aligned}
		&h=\Big((I-\psi_\tau)^{-1}\Big)^\top l.
	\end{aligned}
\end{equation*}
Then
\begin{equation*}
	\begin{aligned}
		&h=\psi_\tau^\top h+l,
	\end{aligned}
	\nonumber
\end{equation*}
which, together with \eqref{solution of linear ODE}, leads to
\begin{equation*}
	\begin{aligned}
		\eta_0=\eta_\tau=h.
	\end{aligned}
\end{equation*}
Consider the infinite horizon ODE (\ref{periodic one-order ODE}) on $[0,\infty)$ with initial value $\eta_0=h$.
Bearing in mind that the coefficients of (\ref{periodic one-order ODE}) is periodic and (\ref{wz19})
is a $\tau$-periodic stabilizer of $[\widehat{A},0;\widehat{B},0]$, we know that this equation with initial value has a unique solution, and it is easy to verify that both $\eta_\cdot$ and $\eta_{\cdot+\tau}$ are its solutions. Hence $\eta_\cdot=\eta_{\cdot+\tau}$ and $\eta$ is a periodic solution with $\eta_0=h$.

As for the uniqueness of solution, if there exists two periodic solutions to ODE (\ref{periodic one-order ODE}) with the initial values $h$ and $h'$, respectively. Denote $\delta h=h-h'$, and
then
\begin{equation*}
	\begin{aligned}
		(I-\psi_\tau)^\top\delta h=0.
	\end{aligned}
\end{equation*}
Since $(I-\psi_\tau)$ is invertible, we have $\delta h=0$.
\end{proof}

Now we are ready to give an explicit optimal control of \textbf{Problem (CL-MFLQE)}.
\begin{theorem}\label{wz29} Assume {\rm$\textbf{(A1)}$}--{\rm$\textbf{(A3)}$},
and $(\Pi,P,\eta)\in\mathscr{D}_\tau(\mathbb{S}^n)\times\mathscr{D}_\tau(\mathbb{S}^n)\times\mathscr{D}_\tau(\mathbb{R}^n)$ are the solutions to (\ref{wz21}), (\ref{wz22}) and (\ref{periodic one-order ODE}), respectively.
{\rm\textbf{Problem (CL-MFLQE)}} is closed-loop solvable with the optimal controls
\begin{equation*}
	\begin{aligned}
		u^{0,x,\Theta^*,\overline{\Theta}^*,v^*}_t=\Theta^*_t X^{0,x,\Theta^*,\overline{\Theta}^*,v^*}_t
+\overline{\Theta}^*_t\mathbb{E}X^{0,x,\Theta^*,\overline{\Theta}^*,v^*}_t+v^*_t,
	\end{aligned}
\end{equation*}
where $(\Theta^*,\overline{\Theta}^*,v^*)\in\mathbb{U}$ with the corresponding periodic measure $\mu^{\Theta^*,\overline{\Theta}^*,v^*}$ defined in Definition \ref{wz23} satisfies for $t\geq0$,
\begin{equation}\label{wz24}
	\begin{aligned}
\int_{\mathbb{R}^n}&\big(y-\int_{\mathbb{R}^n}z\mu_t^{\Theta^*,\overline{\Theta}^*,v^*}(\d z)\big)^\top\big(\Theta^*_t-\Theta^0_t\big)^\top\\
&\times\big(\Theta^*_t-\Theta^0_t\big)\big(y-\int_{\mathbb{R}^n}z\mu_t^{\Theta^*,\overline{\Theta}^*,v^*}(\d z)\big)\mu_t^{\Theta^*,\overline{\Theta}^*,v^*}(\d y)=0\\
	\end{aligned}
\end{equation}
and	
\begin{equation}\label{wz25}
\begin{aligned}
v^*_t-v^0_t+\big(\widehat{\Theta}^*_t-\widehat{\Theta}^0_t\big)\int_{\mathbb{R}^n}z\mu_t^{\Theta^*,\overline{\Theta}^*,v^*}(\d z)=0,
\end{aligned}
\end{equation}
with
\begin{equation*}
	\begin{aligned}
		\Theta^0_t=&-(R_t+D_t^\top P_tD_t)^{-1}(B_t^\top P_t+D_t^\top P_tC_t+S_t),\\
		\widehat{\Theta}^0_t=&-(\widehat{R}_t+\widehat{D}_t^\top P_t\widehat{D}_t)^{-1}(\widehat{B}_t^\top\Pi_t+\widehat{D}_t^\top P_t\widehat{C}_t+\widehat{S}_t),\\
		v^0_t=&-(\widehat{R}_t+\widehat{D}_t^\top P_t\widehat{D}_t)^{-1}(\widehat{B}_t^\top\eta_t+\widehat{D}_t^\top P_t\sigma_t+\rho_t).
	\end{aligned}
\end{equation*}
In particular, an optimal control is
\begin{equation}\label{wz28}
	\begin{aligned}		
u^{0,x,\Theta^0,\overline{\Theta}^0,v^0}_t=\Theta^0_t X^{0,x,\Theta^0,\overline{\Theta}^0,v^0}_t
+\overline{\Theta}^0_t\mathbb{E}X^{0,x,\Theta^0,\overline{\Theta}^0,v^0}_t+v^0_t,
	\end{aligned}
\end{equation}
and the value function in this case is presented by
{\small\begin{equation}\label{wz27}
	\begin{aligned}
	V(x)=\frac{1}{\tau}\int_0^\tau\Big(&-\Big\langle(\widehat{R}_t+\widehat{D}_t^\top P_t\widehat{D}_t)^{-1}(\widehat{B}_t^\top\eta_t+\widehat{D}_t^\top P_t\sigma_t+\rho_t),(\widehat{B}_t^\top\eta_t+\widehat{D}_t^\top P_t\sigma_t+\rho_t)\Big\rangle\\
&+\left\langle P_t\sigma_t,\sigma_t\right\rangle+2\left\langle\eta_t,b_t\right\rangle\Big)\d t.
    \end{aligned}
\end{equation}}
\end{theorem}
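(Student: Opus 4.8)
The plan is to substitute the three distinguished solutions---$P$ from the Riccati equation (\ref{wz21}), $\Pi$ from (\ref{wz22}), and $\eta$ from (\ref{periodic one-order ODE})---into the representation of $\mathcal{E}(x,u^{0,x,\Theta,\overline{\Theta},v})$ furnished by Proposition \ref{transffered form by ODE}, and then to complete squares so that the integrand splits into two manifestly nonnegative quadratic terms plus a control-independent remainder. Writing $R^P_t=R_t+D_t^\top P_tD_t$, $\widehat R^P_t=\widehat R_t+\widehat D_t^\top P_t\widehat D_t$, $Z_t=X^{0,\xi,\Theta,\overline{\Theta},v}_t-\mathbb{E}X^{0,\xi,\Theta,\overline{\Theta},v}_t$ and $Y_t=\mathbb{E}X^{0,\xi,\Theta,\overline{\Theta},v}_t$, the first task is three algebraic identities. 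Using (\ref{wz21}) together with the definition of $\Theta^0_t$ one rewrites the coefficient of the $Z$-term as
\begin{equation*}
\dot P_t+Q^{\Theta,P}_t=(\Theta_t-\Theta^0_t)^\top R^P_t(\Theta_t-\Theta^0_t);
\end{equation*}
using (\ref{wz22}) and (\ref{wz19}) one obtains both $\dot\Pi_t+Q^{\widehat{\Theta},\Pi,P}_t=(\widehat\Theta_t-\widehat\Theta^0_t)^\top\widehat R^P_t(\widehat\Theta_t-\widehat\Theta^0_t)$ and $K^{\widehat{\Theta},\Pi,P}_t=(\widehat\Theta_t-\widehat\Theta^0_t)^\top\widehat R^P_t$; and using (\ref{periodic one-order ODE}) one gets $\dot\eta_t+L^{\widehat{\Theta},\Pi,P,\eta}_t=(\widehat\Theta_t-\widehat\Theta^0_t)^\top l^{P,\eta}_t$. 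Each is the standard ``Riccati completes the square'' computation, where the defining relations for $\Theta^0_t$, $\widehat\Theta^0_t$ and $v^0_t=-(\widehat R^P_t)^{-1}l^{P,\eta}_t$ are substituted.

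With these substitutions the $Y$- and $v$-dependent part of the integrand, after one further completion of the square in $v_t$, collapses to $\langle\widehat R^P_t\zeta_t,\zeta_t\rangle$ with $\zeta_t=(\widehat\Theta_t-\widehat\Theta^0_t)Y_t+v_t-v^0_t$, at the cost of the constant $-\langle(\widehat R^P_t)^{-1}l^{P,\eta}_t,l^{P,\eta}_t\rangle$. Collecting everything I expect
\begin{equation*}
\mathcal{E}(x,u^{0,x,\Theta,\overline{\Theta},v})=\frac1\tau\int_0^\tau\Big(\mathbb{E}\langle R^P_t(\Theta_t-\Theta^0_t)Z_t,(\Theta_t-\Theta^0_t)Z_t\rangle+\langle\widehat R^P_t\zeta_t,\zeta_t\rangle\Big)\d t+V(x),
\end{equation*}
where $V(x)$ is exactly the right-hand side of (\ref{wz27}) and depends neither on the control nor on $x$. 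By \textbf{(A2)} both $R^P_t$ and $\widehat R^P_t$ are uniformly positive definite (since $P\ge0$), so the two quadratic terms are nonnegative and $\mathcal{E}(x,u^{0,x,\Theta,\overline{\Theta},v})\ge V(x)$ for every admissible control.

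It then remains to characterize equality. Here I invoke Proposition \ref{periodic measure stochastic flow property}, which identifies $\mathbb{P}_{X^{0,\xi,\Theta,\overline{\Theta},v}_t}=\mu^{\Theta,\overline{\Theta},v}_t$, so that $Y_t=\int_{\mathbb{R}^n}z\,\mu^{\Theta,\overline{\Theta},v}_t(\d z)$ and the expectation in the $Z$-term becomes an integral against $\mu^{\Theta,\overline{\Theta},v}_t$. By positive definiteness of $R^P_t$, the first term integrates to zero exactly when $(\Theta_t-\Theta^0_t)\big(y-\int z\,\mu_t(\d z)\big)=0$ for $\mu_t$-a.e.\ $y$, which is condition (\ref{wz24}); by positive definiteness of $\widehat R^P_t$, the (deterministic) second term vanishes exactly when $\zeta_t=0$, which is condition (\ref{wz25}). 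Hence any $(\Theta^*,\overline{\Theta}^*,v^*)\in\mathbb{U}$ meeting these two conditions is optimal and attains the value $V(x)$. Finally, the explicit triple $(\Theta^0,\overline{\Theta}^0,v^0)$ with $\overline{\Theta}^0=\widehat\Theta^0-\Theta^0$ makes both conditions hold trivially; admissibility $(\Theta^0,\overline{\Theta}^0,v^0)\in\mathbb{U}$ follows from Proposition \ref{periodic Riccati equation} and Corollary \ref{periodic Riccati equation corollary}, which assert $\Theta^0\in\mathscr{S}_\tau[A,C;B,D]$ and $\widehat\Theta^0\in\mathscr{S}_\tau[\widehat A,0;\widehat B,0]$, together with $v^0\in\mathscr{B}_\tau$, yielding the optimal control (\ref{wz28}) and the value (\ref{wz27}).

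The main obstacle I anticipate is the bookkeeping in the completion-of-squares step---in particular verifying that the cross terms coupling $\widehat\Theta_t$ and $v_t$ reorganize cleanly into the single square $\zeta_t$ and that the leftover constant matches (\ref{wz27})---and, conceptually, handling the circular dependence of $\mu^{\Theta,\overline{\Theta},v}_t$ on the control. Because (\ref{wz24})--(\ref{wz25}) are implicit fixed-point conditions, optimality is best argued through the uniform lower bound $\mathcal{E}(x,u^{0,x,\Theta,\overline{\Theta},v})\ge V(x)$ valid for \emph{all} admissible controls, which is then attained by the explicit stabilizing triple $(\Theta^0,\overline{\Theta}^0,v^0)$, rather than by attempting to solve the fixed-point relations directly.
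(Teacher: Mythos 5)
Your proposal is correct and follows essentially the same route as the paper: substituting the Riccati/ODE solutions into the representation of Proposition \ref{transffered form by ODE}, completing squares to obtain the lower bound $\mathcal{E}(x,u^{0,x,\Theta,\overline{\Theta},v})\ge V(x)$ with the same two nonnegative quadratic terms, characterizing equality via Proposition \ref{periodic measure stochastic flow property} to get (\ref{wz24})--(\ref{wz25}), and exhibiting $(\Theta^0,\overline{\Theta}^0,v^0)$ as an admissible minimizer via Proposition \ref{periodic Riccati equation} and Corollary \ref{periodic Riccati equation corollary}. Your observation that the implicit fixed-point conditions are resolved by the uniform lower-bound argument rather than solved directly is exactly how the paper's proof works.
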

\begin{proof}
Take $\xi$ independent of $\mathbb{F}$ satisfying $\mathbb{P}_{\xi}=\mu_0^{\Theta,\overline{\Theta},v}$ and write $X^{0,\xi,\Theta,\overline{\Theta},v}_\cdot$ as $X_\cdot$ in this proof to save space.
For $t\geq0$, denote
\begin{equation*}
	\begin{aligned}
		Y_t=\mathbb{E}X_t,\ \ \ Z_t=X_t-Y_t,\ \ \ R^P_t=R_t+D_t^\top P_t D_t.
	\end{aligned}
	\nonumber
\end{equation*}
First for $(\Theta,\overline{\Theta},v)\in\mathbb{U}$, by Proposition \ref{cost functional coefficients} we have
{\begin{equation*}
	\begin{aligned}
		Q^{\Theta,P}_t=&(A_t+B_t\Theta_t)^\top P_t+P_t(A_t+B_t\Theta_t)+Q_t+\Theta_t^\top R_t\Theta_t\\
		&+(C_t+D_t\Theta_t)^\top P_t(C_t+D_t\Theta_t)+S_t^\top\Theta_t+\Theta_t^\top S_t\\
		=&Q_t+A_t^\top P_t+P_tA_t+C_t^\top P_tC_t\\
&-(B_t^\top P_t+D_t^\top P_tC_t+S_t)^\top(R_t+D_t^\top P_tD_t)^{-1}(B_t^\top P_t+D_t^\top P_tC_t+S_t)\\
		&+(\Theta_t-\Theta^0_t)^\top(R_t+D_t^\top P_tD_t)(\Theta_t-\Theta^0_t)\\
		=&Q^{\Theta^0,P}_t+(\Theta_t-\Theta^0_t)^\top R^P_t(\Theta_t-\Theta^0_t),\\
Q^{\widehat{\Theta},\Pi,P}_t=&(\widehat{A}_t+\widehat{B}_t\widehat{\Theta}_t)^\top \Pi_t+\Pi_t(\widehat{A}_t+\widehat{B}_t\widehat{\Theta}_t)+\widehat{Q}_t+\widehat{\Theta}_t^\top\widehat{R}_t\widehat{\Theta}_t\\
		&+(\widehat{C}_t+\widehat{D}_t\widehat{\Theta}_t)^\top P_t(\widehat{C}_t+\widehat{D}_t\widehat{\Theta}_t)+\widehat{S}_t^\top\widehat{\Theta}_t+\widehat{\Theta}_t^\top\widehat{S}_t\\
		=&\widehat{Q}_t+\widehat{A}_t^\top\Pi_t+\Pi_t\widehat{A}_t+\widehat{C}_t^\top P_t\widehat{C}_t\\
		&-(\widehat{B}_t^\top\Pi_t+\widehat{D}_t^\top P_t\widehat{C}_t+\widehat{S}_t)^\top(\widehat{R}_t+\widehat{D}_t^\top P_t\widehat{D}_t)^{-1}(\widehat{B}_t^\top\Pi_t+\widehat{D}_t^\top P_t\widehat{C}_t+\widehat{S}_t)\\
		&+(\widehat{\Theta}_t-\widehat{\Theta}^0_t)^\top(\widehat{R}_t+\widehat{D}_t^\top P_t\widehat{D}_t)(\widehat{\Theta}_t-\widehat{\Theta}^0_t)\\		=&Q^{\widehat{\Theta}^0,\Pi,P}_t+(\widehat{\Theta}_t-\widehat{\Theta}^0_t)^\top\widehat{R}^P_t(\widehat{\Theta}_t-\widehat{\Theta}^0_t),\\	
K^{\widehat{\Theta},\Pi,P}_t=&(\widehat{C}_t+\widehat{D}_t\widehat{\Theta}_t)^\top P_t\widehat{D}_t+(\widehat{S}_t+\widehat{R}_t\widehat{\Theta}_t)^\top+\Pi_t\widehat{B}_t\\
		=&(\widehat{\Theta}_t-\widehat{\Theta}^0_t)^\top(\widehat{R}_t+\widehat{D}_t^\top P_t\widehat{D}_t)\\
		=&(\widehat{\Theta}_t-\widehat{\Theta}^0_t)^\top\widehat{R}^P_t,\\	
L^{\widehat{\Theta},\Pi,P,\eta}_t=&(\widehat{A}_t+\widehat{B}_t\widehat{\Theta}_t)^\top\eta_t+(\widehat{C}_t+\widehat{D}_t\widehat{\Theta}_t)^\top P_t\sigma_t+\Pi_tb_t+q_t+\widehat{\Theta}_t^\top\rho_t\\
		=&(\widehat{A}_t-\widehat{B}_t(\widehat{R}_t+\widehat{D}_t^\top P_t\widehat{D}_t)^{-1}(\widehat{B}_t^\top\Pi_t+\widehat{D}_t^\top P_t\widehat{C}_t+\widehat{S}_t))^\top\eta_t\\
&+(\widehat{C}_t-\widehat{D}_t(\widehat{R}_t+\widehat{D}_t^\top P_t\widehat{D}_t)^{-1}(\widehat{B}_t^\top\Pi_t+\widehat{D}_t^\top P_t\widehat{C}_t+\widehat{S}_t))^\top P_t\sigma_t+\Pi_tb_t+q_t\\
&-(\widehat{B}_t^\top\Pi_t+\widehat{D}_t^\top P_t\widehat{C}_t+\widehat{S}_t)^\top(\widehat{R}_t+\widehat{D}_t^\top P_t\widehat{D}_t)^{-1}\rho_t\\
&+(\widehat{\Theta}_t-\widehat{\Theta}^0_t)^\top(\widehat{B}_t^\top\eta_t+\widehat{D}_t^\top P_t\sigma_t+\rho_t)\\
		=&L^{\widehat{\Theta}^0,\Pi,P,\eta}_t+(\widehat{\Theta}_t-\widehat{\Theta}^0_t)^\top l^{P,\eta}_t.
	\end{aligned}
\end{equation*}}
Then it follows from Proposition \ref{transffered form by ODE} again that
{\small\begin{equation}\label{wz26}
	\begin{aligned}
		&\mathcal{E}(x,u^{0,x,\Theta,\overline{\Theta},v})\\
		=&\frac{1}{\tau}\int_{0}^{\tau}\Big(\mathbb{E}\left\langle (\dot{P}_t+Q^{\Theta,P}_t)Z_t,Z_t\right\rangle+\left\langle (\dot{\Pi}_t+Q^{\widehat{\Theta},\Pi,P}_t)Y_t,Y_t\right\rangle+2\left\langle Y_t,K^{\widehat{\Theta},\Pi,P}_tv_t\right\rangle\\
&\ \ \ \ \ \ \ \ \ +2\left\langle Y_t,\dot{\eta}_t+L^{\widehat{\Theta},\Pi,P,\eta}_t\right\rangle+\left\langle \widehat{R}^P_tv_t,v_t\right\rangle+2\left\langle v_t,l^{P,\eta}_t\right\rangle+\left\langle P_t\sigma_t,\sigma_t\right\rangle+2\left\langle\eta_t,b_t\right\rangle\Big)\d r\\		=&\frac{1}{\tau}\int_0^\tau\Big(\mathbb{E}\left\langle(\dot{P}_t+Q^{\Theta^0,P}_t)Z_t,Z_t\right\rangle+\mathbb{E}\left\langle R^P_t(\Theta_t-\Theta^0_t)Z_t,(\Theta_t-\Theta^0_t)Z_t\right\rangle\\
&\ \ \ \ \ \ \ \ \ +\left\langle (\dot{\Pi}_t+Q^{\widehat{\Theta}^0,\Pi,P}_t)Y_t,Y_t\right\rangle+\left\langle\widehat{R}^P_t(\widehat{\Theta}_t-\widehat{\Theta}^0_t)Y_t,(\widehat{\Theta}_t-\widehat{\Theta}^0_t)Y_t\right\rangle\\ &\ \ \ \ \ \ \ \ \ +2\left\langle\widehat{R}^P_t(\widehat{\Theta}_t-\widehat{\Theta}^0_t)Y_t,v_t\right\rangle+2\left\langle Y_t,\dot{\eta}_t+L^{\widehat{\Theta}^0,\Pi,P,\eta}_t\right\rangle+2\left\langle(\widehat{\Theta}_t-\widehat{\Theta}^0_t)Y_t,l^{P,\eta}_t\right\rangle\\
		&\ \ \ \ \ \ \ \ \ +\left\langle \widehat{R}^P_tv_t,v_t\right\rangle+2\left\langle v_t,l^{P,\eta}_t\right\rangle+\left\langle P_t\sigma_t,\sigma_t\right\rangle+2\left\langle\eta_t,b_t\right\rangle\Big)\d r\\
		=&\frac{1}{\tau}\int_0^\tau\Big(\mathbb{E}\left\langle R^P_t(\Theta_t-\Theta^0_t)Z_t,(\Theta_t-\Theta^0_t)Z_t\right\rangle\\		
&\ \ \ \ \ \ \ \ \ +\Big\langle\widehat{R}^P_t\left((\widehat{\Theta}_t-\widehat{\Theta}^0_t)Y_t+v_t
+(\widehat{R}^P_t)^{-1}l^{P,\eta}_t\right),\left((\widehat{\Theta}_t-\widehat{\Theta}^0_t)Y_t+v_t+(\widehat{R}^P_t)^{-1}l^{P,\eta}_t\right)\Big\rangle\\
&\ \ \ \ \ \ \ \ \ -\left\langle(\widehat{R}^P_t)^{-1}l^{P,\eta}_t,l^{P,\eta}_t\right\rangle+\left\langle P_t\sigma_t,\sigma_t\right\rangle+2\left\langle\eta_t,b_t\right\rangle\Big)\d r\\	\ge&\frac{1}{\tau}\int_0^\tau\Big(-\left\langle(\widehat{R}^P_t)^{-1}l^{P,\eta}_t,l^{P,\eta}_t\right\rangle+\left\langle P_t\sigma_t,\sigma_t\right\rangle+2\left\langle\eta_t,b_t\right\rangle\Big)\d r\\
		=&\frac{1}{\tau}\int_0^\tau\Big(-\Big\langle(\widehat{R}_t+\widehat{D}_t^\top P_t\widehat{D}_t)^{-1}(\widehat{B}_t^\top\eta_t+\widehat{D}_t^\top P_t\sigma_t+\rho_t),(\widehat{B}_t^\top\eta_t+\widehat{D}_t^\top P_t\sigma_t+\rho_t)\Big\rangle\\
&\ \ \ \ \ \ \ \ \ +\left\langle P_t\sigma_t,\sigma_t\right\rangle+2\left\langle\eta_t,b_t\right\rangle\Big)\d r.
	\end{aligned}
\end{equation}}
Obviously, the equality holds in above if and only if $(\Theta^*,\overline{\Theta}^*,v^*)\in\mathbb{U}$ satisfies for $t\in[0,\tau)$,
\begin{equation*}
	\begin{aligned}
		\mathbb{E}[Z_t^\top(\Theta^*_t-\Theta^0_t)^\top(\Theta^*_t-\Theta^0_t)Z_t]=0\ \ {\rm and}\ \
		(\widehat{\Theta}^*_t-\widehat{\Theta}^0_t)Y_t+v^*_t+(\widehat{R}^P_t)^{-1}l^{P,\eta}_t=0,
	\end{aligned}
\end{equation*}
which implies (\ref{wz24}) and (\ref{wz25}) and gives the expression of the closed-loop optimal controls (\ref{wz28}) for $t\in[0,\tau)$.

In particular,
\begin{equation*}
	\begin{aligned}
		(\Theta^*,\overline{\Theta}^*,v^*)=(\Theta^0,\overline{\Theta}^0,v^0)
	\end{aligned}
\end{equation*}
is an optimal control since $(\Theta^0,\overline{\Theta}^0,v^0)\in\mathbb{U}$ by Proposition \ref{periodic Riccati equation} and Corollary \ref{periodic Riccati equation corollary}. With $(\Theta^0,\overline{\Theta}^0,v^0)$, the minimum value in (\ref{wz26}) is gotten, which implies (\ref{wz27}).

For any $t=k\tau+r$, where $r\in[0,\tau)$, $k\in\mathbb{N}$. Since $(\Theta,\overline{\Theta},v)$, $\mu^{\Theta^*,\overline{\Theta}^*,v^*}$, $(\Pi,P,\eta)$ and coefficients in \textbf{Problem (CL-MFLQE)} appearing in above argument are all $\tau$-periodic, all the results obtained for $t\in[0,\tau)$ still holds for $t\in[0,\infty)$.
\end{proof}

\begin{rmk} We observe from the value function (\ref{wz27}) that if non-homogeneous terms in the closed-loop state equation and 1-order terms in the cost function vanish, the value function will be $0$. This phenomenon exactly coincides with the fact that the square expectation of the state tends to $0$ if the closed-loop state equation does not involve non-homogeneous terms, and consequently the cost functional without 1-order terms also tends to $0$.
\end{rmk}

\section{Examples}

Finally, we present one example to illustrate our theoretical results.
\begin{example} Consider a 2-dimensional SDE
{\begin{equation*}
	\left\{\begin{aligned}
		\d X_t=&\Bigg(\begin{pmatrix}-1&\cos t\\0&-1\end{pmatrix}X_t+\begin{pmatrix}0&\sin t-\cos t\\0&0\end{pmatrix}\mathbb{E}X_t+\begin{pmatrix}1&0\\0&1\end{pmatrix}u_t+\begin{pmatrix}\sin t\\1\end{pmatrix}\Bigg)\d t\\
		&+\Bigg(\begin{pmatrix}2\cos t&0\\0&2\cos t\end{pmatrix}X_t+\begin{pmatrix}\sin t-2\cos t&0\\0&\sin t-2\cos t\end{pmatrix}\mathbb{E}X_t\\
		&+\begin{pmatrix}\cos t\\1\end{pmatrix}\Bigg)\d W_t,\ \ \ t\ge 0,\\
		X_0=&x.
	\end{aligned}\right.
\end{equation*}}
with the ergodic quadratic cost functional
{\begin{equation*}
	\begin{aligned}
		&F(t,X_t,\mathbb{E}X_t,u_t,\mathbb{E}u_t)\\
		=&\left\langle\begin{pmatrix}35-19\cos^2t&\sin t+7\cos t-4\cos^3t\\\sin t+7\cos t-4\cos^3t&35-21\cos^2t\end{pmatrix}X_t,X_t\right\rangle+\left\langle u_t,u_t\right\rangle\\
		&+2\left\langle\begin{pmatrix}-3\sin t+\cos t-6\sin t\cos t\\-3-5\sin t-\cos t-\sin t\cot t-\sin^2 t+\sin ^3t\end{pmatrix},X_t\right\rangle-2\left\langle\begin{pmatrix}\cos t\\\sin t\end{pmatrix},u_t\right\rangle\\
		&+\left\langle\begin{pmatrix}-24+23\cos^2t&4\sin t-9\cos t+5\cos^3t\\4\sin t-9\cos t+5\cos^3t&-26+27\cos^2t\end{pmatrix}\mathbb{E}X_t,\mathbb{E}X_t\right\rangle.
	\end{aligned}
\end{equation*}}
\end{example}
Here the corresponding coefficients in this example are
{\begin{equation*}
	\begin{aligned}
		&A_t=\begin{pmatrix}-1&\cos t\\0&-1\end{pmatrix},\ B_t=I_2,\ C_t=2\cos tI_2,\ D_t=0,\ b_t=\begin{pmatrix}\sin t\\1\end{pmatrix},\\
		&\widehat{A}_t=\begin{pmatrix}-1&\sin t\\0&-1\end{pmatrix},\ \widehat{B}_t=I_2,\ \widehat{C}_t=\sin tI_2,\ \widehat{D}_t=0,\ \sigma_t=\begin{pmatrix}\cos t\\1\end{pmatrix},\\
		&Q_t=\begin{pmatrix}35-19\cos^2t&\sin t+7\cos t-4\cos^3t\\\sin t+7\cos t-4\cos^3t&35-21\cos^2t\end{pmatrix},\ R_t=I_2,\ S_t=0,\\
		&\widehat{Q}_t=\begin{pmatrix}15-4\sin^2t&5\sin t-2\cos t+\cos^3t\\5\sin t-2\cos t+\cos^3t&15-6\sin^2t\end{pmatrix},\ \widehat{R}_t=I_2,\ \widehat{S}_t=0,\\
		&q_t=\begin{pmatrix}-3\sin t+\cos t-6\sin t\cos t\\-3-5\sin t-\cos t-\sin t\cot t-\sin^2 t+\sin ^3t\end{pmatrix},\ \rho_t=-\begin{pmatrix}\cos t\\\sin t\end{pmatrix},
	\end{aligned}
\end{equation*}}
which are all bounded and periodic with a period $2\pi$. Hence \textbf{(A1)} holds.
As for \textbf{(A2)}, it is satisfied since
{\begin{equation*}
	\begin{aligned}
		Q_t-S_t^\top R_t^{-1}S_t=&\begin{pmatrix}35-19\cos^2t&\sin t+7\cos t-4\cos^3t\\\sin t+7\cos t-4\cos^3t&35-21\cos^2t\end{pmatrix}\ge 2I_2,\\
		\widehat{Q}_t-\widehat{S}_t^\top\widehat{R}_t^{-1}\widehat{S}_t=&\begin{pmatrix}15-4\sin^2t&5\sin t-2\cos t+\cos^3t\\5\sin t-2\cos t+\cos^3t&15-6\sin^2t\end{pmatrix}\ge I_2.
	\end{aligned}
\end{equation*}}
To verify \textbf{(A3)}, Take
$\Theta(\cdot)=-I_2\in \mathscr{B}_{2\pi}(\mathbb{R}^{2\times2})$, and then the homogeneous system
{\begin{equation*}
	\left\{\begin{aligned}
		\d\Phi_t&=\left(\begin{pmatrix}-1&\cos t\\0&-1\end{pmatrix}-\begin{pmatrix}1&0\\0&1\end{pmatrix}\right)\Phi_t\d t+\begin{pmatrix}2\cos t&0\\0&2\cos t\end{pmatrix}\Phi_t\d W_t,\ \ \ t\ge 0,\\
		\Phi_0&=I_2,
	\end{aligned}\right.
\end{equation*}}
has a unique solution
{\begin{equation*}
	\begin{aligned}
		\Phi_t={\rm e}^{\int_0^t\Big(-2-2\cos^2s\Big)\d s+\int_0^t2\cos s\d W_s}\begin{pmatrix}1&\sin t\\0&1\end{pmatrix}.
	\end{aligned}
\end{equation*}}
Then for any $t\ge 0$,
\begin{equation*}
	\begin{aligned}
		\mathbb{E}|\Phi_t|^2=&{\rm e}^{\int_0^t\Big(-4+4\cos^2s\Big)\d s}(2+\sin^2 t)={\rm e}^{\int_0^t\Big(2\cos 2s-2\Big)\d s}(2+\sin^2 t)\le 6{\rm e}^{1-2t}.
	\end{aligned}
\end{equation*}
Take
$\widehat{\Theta}(\cdot)=-I_2\in\mathscr{B}_{2\pi}(\mathbb{R}^{2\times2})$, and then the homogeneous system
\begin{equation*}
	\left\{\begin{aligned}
		\d\Psi_t&=\left(\begin{pmatrix}-1&\sin t\\0&-1\end{pmatrix}-\begin{pmatrix}1&0\\0&1\end{pmatrix}\right)\Psi_t\d t,\ t\ge 0,\\
		\Psi_0&=I_2,
	\end{aligned}\right.
\end{equation*}
has a unique solution
\begin{equation*}
	\begin{aligned}
		\Psi_t={\rm e}^{-2t}\begin{pmatrix}1&1-\cos t\\0&1\end{pmatrix}.
	\end{aligned}
\end{equation*}
Then for any $t\ge 0$,
\begin{equation*}
	\begin{aligned}
		|\Psi_t|^2={\rm e}^{-4t}(2+(1-\cos t)^2)\le 6{\rm e}^{-4t}.
	\end{aligned}
\end{equation*}
Moreover, the related Riccati equations and ODE are as follows
\begin{equation*}
	\begin{aligned}
		&\dot{P}_t+\begin{pmatrix}35-19\cos^2t&\sin t+7\cos t-4\cos^3t\\\sin t+7\cos t-4\cos^3t&35-21\cos^2t\end{pmatrix}+\begin{pmatrix}-1&0\\\cos t&-1\end{pmatrix}P_t\\
		&+P_t\begin{pmatrix}-1&\cos t\\0&-1\end{pmatrix}+\begin{pmatrix}4\cos^2 t&0\\0&4\cos^2 t\end{pmatrix}P_t-P_t^\top P_t=0,\\
		&\dot{\Pi}_t+\begin{pmatrix}15-4\sin^2t&5\sin t-2\cos t+\cos^3t\\5\sin t-2\cos t+\cos^3t&15-6\sin^2t\end{pmatrix}+\begin{pmatrix}-1&0\\\sin t&-1\end{pmatrix}\Pi_t\\
		&+\Pi_t\begin{pmatrix}-1&\sin t\\0&-1\end{pmatrix}+\begin{pmatrix}\sin^2 t&0\\0&\sin^2 t\end{pmatrix}P_t-\Pi_t^\top\Pi_t=0,
\end{aligned}
\end{equation*}
\begin{equation*}
	\begin{aligned}			
&\dot{\eta}_t+\Bigg(\begin{pmatrix}-1&0\\ \sin t &-1\end{pmatrix}-\Pi^\top_t\Bigg){\eta}_t+P_t\begin{pmatrix}\sin t\cos t\\\sin t\end{pmatrix}+\Pi_t\begin{pmatrix}\sin t\\1\end{pmatrix}\\
		&+\begin{pmatrix}-3\sin t+\cos t-6\sin t\cos t\\-3-5\sin t-\cos t-\sin t\cot t-\sin^2 t+\sin ^3t\end{pmatrix}+\Pi_t^\top\begin{pmatrix}\cos t\\\sin t\end{pmatrix}=0,
	\end{aligned}
	\nonumber
\end{equation*}
with the solutions
\begin{equation*}
	\begin{aligned}
		P_t=\begin{pmatrix}5&\cos t\\\cos t&5\end{pmatrix},\ \Pi_t=\begin{pmatrix}3&\sin t\\\sin t&3\end{pmatrix},\ \eta_t=\begin{pmatrix}\cos t\\\sin t\end{pmatrix}.
	\end{aligned}
\end{equation*}
By Theorem \ref{wz29} we have a closed-loop optimal control
\begin{equation*}
	\begin{aligned}
		\Theta^0_t=-\begin{pmatrix}5&\cos t\\\cos t&5\end{pmatrix},\ \widehat{\Theta}^0_t=-\begin{pmatrix}3&\sin t\\\sin t&3\end{pmatrix},\ v^0_t=0,
	\end{aligned}
\end{equation*}
and the value function is
{\small\begin{equation*}
	\begin{aligned}
		V(x)=\frac{1}{2\pi}\int_0^{2\pi}\Bigg(\left\langle\begin{pmatrix}5&\cos t\\\cos t&5\end{pmatrix}\begin{pmatrix}\cos t\\1\end{pmatrix},\begin{pmatrix}\cos t\\1\end{pmatrix}\right\rangle+2\left\langle\begin{pmatrix}\cos t\\\sin t\end{pmatrix},\begin{pmatrix}\sin t\\1\end{pmatrix}\right\rangle\Bigg)\d t
		=\frac{17}{2}.
	\end{aligned}
\end{equation*}}


\begin{thebibliography}{99}
	\bibitem{ref11}J.-M. Bismut, {\it Linear quadratic optimal stochastic control with random coefficients}, SIAM J. Control Optim., 14 (1976), pp. 419--444.
	\bibitem{fe-zh}C. Feng, H.~Z. Zhao, {\it Random periodic processes, periodic measures and ergodicity},
J. Differential Equations, 269 (2020), pp. 7382--7428.
    \bibitem{ref14}G. Guatteri and F. Masiero, {\it Infinite horizon and ergodic optimal quadratic control
for an affine equation with stochastic coefficients}, SIAM J. Control Optim., 48 (2009), pp. 1600--1631.
	\bibitem{ref15}G. Guatteri and F. Masiero, {\it Ergodic optimal quadratic control for an affine equation with stochastic and stationary coefficients}, Systems Control Lett., 58 (2009), pp. 169--177.
	\bibitem{ref23}J. Huang, X. Li and J. Yong, {\it A linear-quadratic optimal control problem for mean-field stochastic differential equations in infinite horizon}, Math. Control Relat. Fields, 5 (2015), pp. 97--139.
\bibitem{ref9}R.~E. Kalman, {\it Contributions to the theory of optimal control}, Bol. Soc. Mat. Mex., 5 (1960),
	pp. 102--119.
    \bibitem{ref31}C. Ma, {\it Stochastic optimal control problems of Markov processes}, Thesis of Shandong
University, (2024), 85pp.
	\bibitem{ref21}H. Mei, Q. Wei and J. Yong, {\it Optimal ergodic control of linear stochastic differential equations with quadratic cost functionals having indefinite weights}, SIAM J. Control Optim., 59 (2021), pp. 584--613.
    \bibitem{ref26}J. Sun, {\it Mean-field stochastic linear quadratic optimal control problems: open-loop
solvabilities}, ESAIM Control Optim. Calc. Var., 23 (2017), pp. 1099--1127.
	\bibitem{ref30}J. Sun and J. Yong, {\it Turnpike properties for stochastic linear-quadratic optimal control problems with periodic coefficients}, J. Differential Equations, 400 (2024), pp. 189--229.
	\bibitem{ta}S. Tang, {\it General linear quadratic optimal stochastic control problems with random coefficients: linear stochastic Hamilton systems and backward stochastic Riccati equations}, SIAM J. Control Optim., 42 (2003), pp. 53--75.
    \bibitem{ref5}C. Villani, {\it Optimal Transport: Old and New}, Springer, Berlin, 2009.
    \bibitem{wo}W.~M. Wonham, {\it Optimal stationary control of a linear system with statedependent
noise}, SIAM J. Control, 5 (1967), 486--500.
    \bibitem{ref10}W.~M. Wonham, {\it On a matrix Riccati equation of stochastic control}, SIAM J. Control,
6 (1968), pp. 681--697.
    \bibitem{ref22}J. Yong, {\it Linear-quadratic optimal control problems for mean-field stochastic
differential equations}, SIAM J. Control Optim., 51 (2013), pp. 2809--2838.
    \bibitem{ref25}J. Yong, {\it Linear-quadratic optimal control problems for mean-field stochastic
differential equations--time-consistent solutions}, Trans. Amer. Math. Soc., 369 (2017), pp. 5467--5523.
\end{thebibliography}
\end{document}